\documentclass[12pt]{article}
\usepackage{latexsym}
\usepackage{amsmath, amssymb, amsthm,amsfonts}
\usepackage{graphicx}
\usepackage{pdflscape}
\usepackage{rotating}
\usepackage{breqn}
\usepackage{geometry}
\usepackage{enumitem}
\usepackage{booktabs}
\usepackage{array}
\usepackage{placeins}
\usepackage{hyperref}
\hypersetup{colorlinks=true,linkcolor=blue,citecolor=blue,urlcolor=blue}

\usepackage{color}

\newif\ifshowchanges
\showchangesfalse
\newcommand{\chg}[1]{\ifshowchanges{\color{blue}#1}\else#1\fi}

\newtheorem{theorem}{Theorem}[section]
\newtheorem{lemma}{Lemma}[section]
\newtheorem{proposition}{Proposition}[section]
\newtheorem{remark}{Remark}[section]
\newtheorem{corollary}{Corollary}[section]

\title{Direct Sample-Average Learning for Queueing Control:\\
Gradient-Free Top-$K$ Coarse-to-Fine Optimization}
\author{%
\ifshowchanges\color{blue}\fi
Mehrdad Nozohour Yazdi \qquad Yiqiang Q. Zhao\\[0.4em]
\small School of Mathematics and Statistics, Carleton University\\
\small \texttt{mehrdadnozohouryazdi@cmail.carleton.ca}\qquad
\texttt{zhao@math.carleton.ca}%
}
\date{\ifshowchanges\color{blue}\fi October 18, 2026}

\begin{document}
\maketitle

\begin{abstract}
We study steady-state queueing control when decisions must be learned from a
single persistent operating trajectory: only the implemented action produces
data, and the queue is not reset when the action changes. Fixed-event-count
cycles provide a direct ratio-of-totals sample-average approximation (SAA) of
each action's steady-state cost rate. A Top-$K$ coarse-to-fine (CTF) search then
ranks these estimates, retains several promising refinement paths, and
concentrates later operation on finer candidate sets. The method requires no
gradient, convexity, or differentiability.

We decompose direct-SAA error into accumulated conditional transient bias and
martingale fluctuation. For regenerative queues, a bias-function
representation reduces transient effects within contiguous action blocks to
boundary terms. A finite-sample triangular-array bound with
Lyapunov-weighted innovation variance then links stagewise estimation accuracy
to master-grid identification through guardian retention. The queueing and
sampling hypotheses are verified completely for a specified persistent-workload
$M/M/1$ implementation. At a common active set, Top-$K$ has a weakly larger
guardian margin than Top-1, and the implemented hierarchy supplies an
exact-objective example in which Top-1 irreversibly discards the optimal
refinement branch. This structural comparison does not assert uniform
superiority of complete algorithms at equal budgets.

The numerical study delineates scope rather than universal dominance. With
exact workload observations, Top-$K$ gives smaller terminal gaps than the
corrected LiQUAR comparator in the reported samples, whereas LiQUAR has lower
median cumulative learning cost at the large budget. Top-$K$ also improves on
Top-1 and flat SAA in every budget of a new canonical, approximately
clock-matched mixed discrete--continuous staffing experiment. A nonpreemptive
lognormal-service experiment requires much longer horizons, while GOLiQ is
markedly stronger on a smooth pricing--staffing benchmark designed for
queueing sensitivities. Direct ranking is therefore best viewed as a
complement when reliable model-specific gradients are unavailable,
inconvenient, or mismatched to the operational decision space.
\end{abstract}

\medskip
\noindent\textbf{Keywords:} queueing control; online learning; sample average
approximation; coarse-to-fine optimization; gradient-free optimization;
transient bias; mixed discrete--continuous control; heavy-tailed service.

\section{Introduction}
\label{sec:introduction}

Many queueing-control problems require decisions to be made while the system
is operating and while important model primitives are only partially known. A
service provider may choose a price, staffing level, service capacity, routing
rule, or admission threshold while learning from the same queueing trajectory
generated by those decisions. The long-run target is a steady-state cost
rate, but the observations immediately after a policy change are generally
neither independent nor stationary: they inherit the congestion created by
previous controls.

Two difficulties are coupled in this setting. First, objective values must be
learned from dependent, inherited-state observations rather than from fresh
steady-state samples. Second, a fine action grid may be too large to evaluate
uniformly, so the learner must decide which regions deserve further operating
time. The question addressed here is whether direct cost observations can
support both statistically valid steady-state estimation and an economical
multiresolution search without resetting the queue.

We study a route based on direct objective learning.
For an action $a$, let $f(a)$ denote its steady-state expected cost rate. We
organize operation into cycles containing a fixed number $D$ of designated
events and estimate $f(a)$ from the cumulative realized cost and cumulative
clock time observed whenever $a$ is actually implemented:
\[
\widehat f_n(a)
=
\frac{\text{cumulative realized cost under }a}
     {\text{cumulative operating time under }a}.
\]
No gradient or counterfactual trajectory is constructed. When realized cost
is observable, the learner need not be supplied with a steady-state formula,
a demand curve, or a service-time distribution.

A prominent alternative in online queueing optimization is to estimate or
exploit gradients. Chen, Liu, and Hong \cite{ChenLiuHong2023} develop GOLiQ
for joint pricing and capacity sizing, using queueing sensitivity information
to form a stochastic gradient. Chen, Hong, and Liu
\cite{ChenHongLiu2026} develop LiQUAR, which uses two perturbed controls to
construct a finite-difference gradient when the demand function and service
distribution are unknown. Queueing bandit methods provide another important
route; see, among others,
\cite{KrishnasamySenJohariShakkottai2020,JiaShiShen2022}. These methods can
be highly effective when their gradient, confidence, or regret structure fits
the model. Our aim is not to replace them, but to develop the complementary
case in which direct objective values are observable while reliable
model-specific sensitivity information is unavailable, inconvenient, or
poorly matched to discrete decisions.

Direct estimation alone does not solve the search problem when a fine action
grid is large. We therefore place it inside a Top-$K$ coarse-to-fine search:
\[
\boxed{
\text{evaluate candidates}
\;\longrightarrow\;
\text{rank}
\;\longrightarrow\;
\text{retain Top-}K
\;\longrightarrow\;
\text{refine}.}
\]
The method keeps cumulative cost/time statistics and operates each active
action in contiguous blocks. Retaining several parents is important because
a coarse action need not be attractive for its own current-grid objective
value in order for its descendants to contain the eventual fine-grid
optimizer. We formalize this idea through a \emph{guardian}: a parent whose
descendants contain a master-grid optimizer.

The resulting contribution has two interacting components: a statistical
foundation for direct cost-rate estimation on a persistent trajectory, and a
search mechanism that protects promising refinement regions from irreversible
early deletion. The main statistical difficulty is the persistence of the
queueing state.
For cycle $t$, let $a_t$ be the selected action, $T_t$ its random duration, and
$G_t$ its realized cost. Our analysis starts from
\[
G_t-f(a_t)T_t
=
\text{conditional transient contribution}
+
\text{martingale innovation}.
\]
The transient term need not vanish cycle by cycle. Instead, we ask whether
its accumulated contribution is negligible relative to the total time spent
under an action. For regenerative queues, a bias-function representation
makes these transient terms telescope over contiguous action blocks, leaving
only block-boundary contributions. Thus the learning cycles themselves need
not be regeneration cycles and the queue need not be reset.

This transient issue is not absent from the closest gradient-based literature.
GOLiQ explicitly separates regret caused by nonstationarity from regret caused
by suboptimal decisions and uses increasing cycle lengths to control
transient bias; LiQUAR likewise uses warm-up/overtime treatment in its
performance estimation. Our distinction is therefore not that transient
queueing data are unique to the present paper. Rather, we analyze a different
statistical object: the direct cost-rate SAA itself. The resulting
transient-bias/martingale decomposition supports objective consistency and,
through ranking, hierarchical optimization rather than a gradient update.

The paper makes four main contributions, ordered from estimation to search and
then to empirical scope.
\begin{enumerate}[label=(\roman*),leftmargin=2em]
\item We formulate a direct ratio-of-totals SAA estimator for fixed-$D$
operating cycles on a single persistent queue trajectory and derive an exact
error decomposition into accumulated conditional transient bias and martingale
fluctuation. This yields actionwise and finite-action uniform consistency.

\item We give queueing-specific sufficient conditions for that consistency.
For regenerative queues, bias-function boundary terms telescope within
contiguous blocks of a fixed action. Martingale averaging, selected-cycle
moment conditions, and a common Lyapunov route then provide fixed-$D$
verification without resetting the queue.

\item We develop a fixed-depth Top-$K$ CTF method and its retention theory.
A finite-sample blocked-SAA lemma handles horizon-dependent stage budgets
with Lyapunov-weighted innovation variance. A guardian-separation condition
then bounds irreversible pruning across stages, while a common-active-set
comparison quantifies the margin advantage over Top-1. Convexity,
differentiability, and gradient information are not
assumed; mixed discrete--continuous controls are therefore handled without a
continuous relaxation. A worked $M/M/1$ workload model verifies the
common drift, Poisson representation, cycle-reward moments, and completed-stage
sampling conditions.
For its four-level benchmark hierarchy, an exhaustive audit also verifies
positive guardian margins over every required active set.
The same hierarchy supplies a deterministic example in which exact-objective
Top-1 discards the branch containing the master-grid optimizer while Top-$K$
retains it.

\item We evaluate where this architecture helps and where it does not. On
the LiQUAR benchmark with exact workload observations and corrected
evaluation-domain projection, Top-$K$ has a terminal-policy identification
advantage in the reported samples, while the corrected LiQUAR comparator
has lower median cumulative learning cost at the large budget. In an
integer-staffing $M/M/s$ problem, Top-$K$ has lower mean terminal gaps and
higher optimizer-selection frequencies than Top-1 refinement and flat direct
SAA at all three approximately matched operating-time budgets in a canonical
200-replication experiment. A nonpreemptive
lognormal-service $M/G/s$ extension, with different objective geometry and
retention settings, shows slower finite-horizon stabilization but sharp
terminal identification once the operating horizon is sufficiently long.
Finally, on the smooth GOLiQ benchmark, the specialized
gradient method is substantially more efficient, identifying a regime in which
available sensitivity structure should be exploited.
\end{enumerate}

\subsection{Related literature and positioning}
\label{subsec:related-literature}

Two important recent advances in online queueing optimization, and the closest papers to our setting, are GOLiQ \cite{ChenLiuHong2023} and LiQUAR
\cite{ChenHongLiu2026}. Both operate queues in successive cycles and
explicitly address transient data. Their optimization mechanism is gradient
based: GOLiQ exploits queueing sensitivity formulas, whereas LiQUAR compares
two perturbations around a continuously updated center. The present method
stops one layer earlier: it estimates steady-state objective values directly
and uses ranking and structured refinement. This distinction matters when
controls are discrete or mixed, when the objective is nonsmooth or nonconvex,
or when constructing a reliable model-specific gradient is undesirable.

Queueing bandits address partial feedback and exploration with unknown service
rates, prices, or rewards; examples include
\cite{KrishnasamySenJohariShakkottai2020,JiaShiShen2022}. Our finite-grid
problem also has partial feedback, but its basic observation is a
steady-state cost-rate SAA contaminated by inherited-state transients rather
than an i.i.d. arm reward. Top-$K$ pruning is used to preserve refinement
paths and concentrate later operating time, not to construct a UCB index.

The work is also related to simulation optimization and steady-state
estimation. Single-run steady-state optimization for queues has a long
history; see, for example, \cite{SuriZazanis1988,LEcuyerGlynn1994}.
Regenerative simulation \cite{CraneIglehart1975,GlynnIglehart1993}
motivates our bias-function construction. \chg{Classical sample-average approximation for stochastic discrete optimization is developed in
\cite{KleywegtShapiroHomemdeMello2002}, while extensions to dependent data include
\cite{HomemdeMello2008,WangEtAl2022}.} The distinguishing feature here is that dependence is generated by
an adaptive control sequence and the same action may be revisited in multiple
persistent operating blocks.

The positioning is therefore deliberately integrative. Direct SAA,
regenerative analysis, and multiresolution search are not individually claimed
to be new. The contribution is to connect them on one persistent controlled
trajectory: queueing structure validates the direct estimator, the estimator
drives a Top-$K$ hierarchy, and guardian separation translates ranking accuracy
into protection against irreversible loss of an optimal refinement path. This
connection also identifies the method's natural boundary: when accurate local
sensitivity information is available and the decision space is smooth, a
specialized gradient method may be preferable.

The remainder is organized around the operational method. Section
\ref{sec:model-saa} defines persistent fixed-event-count sampling and the
direct estimator. Section \ref{sec:topk-ctf} gives the finite-budget Top-$K$
procedure before the technical analysis. Section \ref{sec:theory} develops
direct-SAA consistency, queueing verification, and guardian-retention theory.
Section \ref{sec:numerical} reports implementation validation and four
benchmark families. Section \ref{sec:discussion} discusses scope,
limitations, and the distinction between terminal identification and
cumulative regret.

\section{Persistent Queueing Control and Direct SAA}
\label{sec:model-saa}


We begin with a nonempty finite feasible action set, supplied to the learner,
\[
\mathcal A
=
\{a_1,\ldots,a_M\},\qquad M\ge1.
\]
The action set may arise, for example, from a finite discretization of an
underlying continuous decision space. For each $a\in\mathcal A$, assume
that its specified fixed-action dynamics admit a well-defined steady-state
regime with finite expected cost rate $f(a)\in\mathbb R$.
Identifying a feasible set from unknown queueing primitives is a separate
problem; the present search operates within the supplied set. Conditions
controlling the trajectory under adaptive switching are stated separately
in the theoretical results. The finite-action optimization problem is
therefore
\begin{equation}
a_{\mathcal A}^*
\in
\operatorname*{argmin}_{a\in\mathcal A} f(a).
\label{eq:grid-optimum}
\end{equation}

We organize the operating trajectory into successive learning cycles.
Each cycle contains a fixed positive integer number \(D\) of designated
queueing events, with the counting convention fixed before learning.
Depending on the application, these events may be external arrivals,
admitted customers, service entries, service completions, or departures
from the system.  In a queueing network, they may also be completions or
departures at a designated node.  Thus, throughout the present development,
\begin{equation}
D_t\equiv D.
\label{eq:fixed-D}
\end{equation}
Although the number of designated events is fixed, the corresponding
clock-time duration is random and may depend on the action being implemented.
We use a simple, nonexplosive designated-event stream and require that the
$D$th subsequent counted event occurs in positive finite time almost surely
under every admissible action and inherited state used in the analysis.
An action that prevents the counted event from occurring is not admissible
for that cycle convention, even if its stationary cost is finite.
Simultaneous physical events require a specified counting and ordering rule
consistent with these conditions.

The queue is observed along a single persistent trajectory.  By persistent
operation, we mean that the physical system is not reset between learning
cycles: the terminal state of one cycle becomes the initial state of the
next, even when the selected action changes.  Consequently, observations
collected under a given action may begin from states generated partly under
previously implemented actions.

Let $X_t$ denote a complete physical state at the end of cycle $t$, with
$X_0$ the initial state. Thus $X_{t-1}$ is inherited by cycle $t$.
Depending on the model, this state may include residual service requirements,
arrival ages or residual arrival information, and legacy staffing. Queue
length alone need not suffice. The learner is not required to observe every
state component.

Let $\mathcal O_t$ be the learner's observed history through cycle $t$,
including policy randomization available before the next action is selected.
For analysis, use a filtration $\{\mathcal H_t\}_{t\ge0}$ containing
$\mathcal O_t$ and $X_t$. Conditional evolution, moment bounds, and
martingale identities are imposed with respect to this specified analysis
filtration. An enlargement of the observed history must preserve the
conditional laws used in the relevant result; future primitive randomness
is not included merely because a simulator can generate it in advance.
At the beginning of cycle \(t\), an action
\[
a_t\in\mathcal A
\]
is selected using $\mathcal O_{t-1}$. Thus $a_t$ is both
$\mathcal O_{t-1}$- and $\mathcal H_{t-1}$-measurable. The selected action is held fixed
throughout cycle \(t\).

Each cycle excludes its initial boundary event and includes its terminating
designated event. The terminal state is recorded after that event, before
the next action is selected. The model must specify how a new action affects
unfinished work, admissions, residual arrival information, and any immediate
service starts. All fixed-action continuation laws below use this same
inherited-state and switching convention. Persistence alone does not specify
these dynamics. The examples have no separate lump switching fee; introducing
one would require including it exactly once in the cycle cost and verifying
the corresponding bias decomposition or negligible accumulated contribution.

Let
\[
T_t>0
\]
denote the realized clock duration of cycle \(t\), and let
\[
G_t
\]
denote the total realized cost accumulated over cycle \(t\). Both \(T_t\) and
\(G_t\) generally depend on the selected action \(a_t\), on the state
inherited at the beginning of the cycle, and on the stochastic evolution
of the system during the cycle.  We suppress this dependence in the
notation and write simply \(T_t\) and \(G_t\).
For every deterministic cycle index $t$ of the policy under analysis, assume
\[
0<T_t<\infty\quad\hbox{almost surely},\qquad
\mathbb E[T_t+|G_t|]<\infty.
\]
The first-moment requirement is imposed separately for every experiment
index $N$ and finite cycle index $t$ in the stagewise analysis. It concerns
the actual controlled trajectory; the common bounds needed for finite-sample
conclusions are stated later. Since $\max_{a\in\mathcal A}|f(a)|<\infty$,
$G_t-f(a_t)T_t$ is integrable, so its conditional mean and centered innovation
in Section~\ref{subsec:bias-decomp} are well defined.

Thus the information chronology is
\[
\mathcal H_{t-1}
\longrightarrow
a_t
\longrightarrow
(T_t,G_t)
\longrightarrow
\mathcal H_t.
\]

For a fixed action \(a\in\mathcal A\), define
\begin{equation}
I_t(a)
:=
\mathbf 1\{a_t=a\},
\label{eq:action-indicator}
\end{equation}
and let
\begin{equation}
S_n(a)
:=
\sum_{t=1}^n I_t(a)T_t
\label{eq:operating-time}
\end{equation}
denote the cumulative clock time for which action \(a\) has been operated
through cycle \(n\), including cycle \(n\). Similarly, define the cumulative observed cost
\begin{equation}
C_n(a)
:=
\sum_{t=1}^n I_t(a)G_t.
\label{eq:cumulative-cost}
\end{equation}

Whenever \(S_n(a)>0\), the direct SAA estimator of the steady-state cost
rate is
\begin{equation}
\widehat f_n(a)
=
\frac{C_n(a)}{S_n(a)}
=
\frac{
\sum_{t=1}^n I_t(a)G_t
}{
\sum_{t=1}^n I_t(a)T_t
}.
\label{eq:direct-saa-estimator}
\end{equation}

The ratio form in \eqref{eq:direct-saa-estimator} is natural because the
objective \(f(a)\) is a long-run cost per unit of clock time, while the
clock durations \(T_t\) of fixed-\(D\) cycles are random.

\begin{remark}[Unknown primitives and imperfect cost observation]
\label{rem:unknown-primitives}
The direct estimator requires realized operating times and realized costs, not
an analytical formula for $f(a)$.  Thus arrival-rate functions, service-time
distributions, and other queueing primitives may be unknown provided their
realized consequences are reflected in $G_t$.  
More generally, suppose the learner observes
\[
\widetilde G_t = G_t + E_t,
\]
instead of \(G_t\), where \(E_t\) denotes the observation or approximation
error in the realized cost for cycle \(t\).
For the infinite-trajectory results, suppose that, for every action
under consideration,
\[
\frac{\sum_{t=1}^n I_t(a)E_t}{S_n(a)}
\longrightarrow0
\qquad\text{almost surely}.
\]
Whenever the hypotheses of the corresponding almost-sure consistency
result hold for the actual selection policy, its conclusion then also
holds for the estimator formed from $\widetilde G_t$. All sums and
hypotheses refer to the trajectory generated by that policy, including
its use of imperfect observations. The increasing-budget stagewise
results require the separate condition in
Remark~\ref{rem:stagewise-observation-error}.
\end{remark}


\paragraph{From direct estimation to search.}
For a small finite set, one can simply sample every action deeply and return
an empirical leader. On a fine grid this becomes expensive because each
action requires actual queue operation. The next section therefore uses the
same estimator as a ranking primitive inside a fixed-depth refinement
hierarchy. The practical algorithm requires only cumulative costs, operating
times, and comparisons among active actions.

\section{Top-$K$ Coarse-to-Fine Learning}
\label{sec:topk-ctf}

A direct objective estimator is useful only if it can be embedded in an
efficient search. The proposed CTF architecture is intentionally simple:
active controls are operated in blocks, ranked by direct SAA, pruned, and
refined. Earlier observations of a surviving control are retained rather
than discarded.

\paragraph{Why Top-$K$?}
The Top-$K$ rule is intended as a middle ground between exhaustive refinement
and greedy Top-1 refinement. Refining every active action avoids irreversible
pruning but can require operating a rapidly growing number of fine-grid
controls on the persistent queue trajectory. At the other extreme, retaining
only the current empirical best action is inexpensive but can discard a
coarse action whose descendants contain the eventual fine-grid optimizer.
Top-$K$ retention reduces the candidate set while preserving several promising
refinement paths. It also fits naturally with direct SAA: only objective
estimates and rankings are required, without confidence bounds, gradients, or
additional stochastic-approximation machinery. The guardian analysis in
Section~\ref{sec:theory} formalizes when the retained set is large enough to
preserve a path to a master-grid optimizer.

\subsection{Finite-budget Top-K CTF procedure}
\label{subsec:practical-topk}

Let $\mathcal U_0,\ldots,\mathcal U_L$ be finite nested candidate grids
contained in $\mathcal A$, and put
$\mathcal U:=\bigcup_{\ell=0}^L\mathcal U_\ell\subseteq\mathcal A$.
The procedure samples only actions in $\mathcal U$ and targets the best
action on the master grid $\mathcal U_L$. Let
$\operatorname{Ch}_\ell(a)\subseteq\mathcal U_{\ell+1}$ denote the children
of $a\in\mathcal U_\ell$.  We assume $\mathcal U_0$ and $\mathcal U_L$ are nonempty and that
$\operatorname{Ch}_\ell(a)\neq\varnothing$ for every
$a\in\mathcal U_\ell$ and $\ell<L$.  Thus a nonempty retained set generates a nonempty next
active set and the final empirical argmin is well defined.
Here $N\in\mathbb N$ indexes a prescribed family of deterministic sampling
schedules; it is not itself a realized cycle, event, or clock-time count.
Choose a fixed event count $D$, positive integer cumulative per-action stage targets
$m_{0,N}\leq\cdots\leq m_{L,N}$, and retention counts
$K_0,\ldots,K_{L-1}\ge1$. At a nonempty active set $A$, retain
$k_\ell(A):=\min\{K_\ell,|A|\}$ parents, so the rule also covers stages
with fewer than $K_\ell$ candidates.  The following blocked implementation is used in the
numerical study and is the finite-budget counterpart of the asymptotic theory
below. In the increasing-budget limit, each fixed stage target tends to
infinity as $N\to\infty$. For example,
$m_{\ell,N}=\lceil Nw_\ell\rceil$ with fixed
$0<w_0\le\cdots\le w_L$ is an admissible scaling; finite experimental
target vectors can be specified directly.

\begin{enumerate}[label=\textbf{Step \arabic*:},leftmargin=2.2cm]
\item Set the initial active set $A_{0,N}=\mathcal U_0$ and initialize, for
every action, cumulative cycle count, operating time, and observed cost to zero.

\item At stage $\ell$, operate each $a\in A_{\ell,N}$ in one contiguous block
until its \emph{cumulative} number of fixed-$D$ cycles reaches
$m_{\ell,N}$.  If $a$ was sampled at an earlier stage, retain and reuse all of
its previous statistics.  The order of the blocks may be fixed or randomized
using only information available before the corresponding block is started.

\item Compute
\[
\widehat f_{\ell,N}(a)
=
\frac{C_{\ell,N}(a)}{S_{\ell,N}(a)},
\qquad a\in A_{\ell,N},
\]
and rank the active actions, using a fixed tie-breaking rule.

\item If $\ell<L$, retain the empirical top $k_\ell(A_{\ell,N})$ parents,
$\widehat{\mathcal R}_{\ell,N}$, and set
\[
A_{\ell+1,N}
=
\bigcup_{a\in\widehat{\mathcal R}_{\ell,N}}
\operatorname{Ch}_\ell(a).
\]
At the final level return
\[
\widehat a_N
\in
\operatorname*{argmin}_{a\in A_{L,N}}
\widehat f_{L,N}(a).
\]
\end{enumerate}

The child maps are part of the algorithm specification; grid sizes and
nestedness alone do not determine them. Shared children are included only
once in each active set, and an action that reappears keeps its cumulative
statistics. A claim relative to the whole master grid additionally requires
reachability of a master-grid optimizer, as specified in
Section~\ref{subsec:topk-guardian}.

\paragraph{Cycle, event, and elapsed-time budgets.}
Let $\sigma_{\ell,N}$ be the number of completed primitive cycles at the
stage-$\ell$ ranking. Nondecreasing targets and reuse of observations imply
\[
\sigma_{\ell,N}\le|\mathcal U|m_{\ell,N}.
\]
Together with almost-sure completion of each primitive cycle, this ensures
completion of every prescribed finite schedule. Its designated-event count
is $D\sigma_{L,N}$ and its random elapsed operating time is
\[
H_N:=\sum_{t=1}^{\sigma_{L,N}}T_t^{(N)}.
\]
Here $T_t^{(N)}$ is the duration of cycle $t$ in schedule $N$.
Thus a cycle cap $b_N^{\rm cyc}$ or designated-event cap $b_N^{\rm evt}$
can be respected by choosing targets with
$|\mathcal U|m_{L,N}\le b_N^{\rm cyc}$ or
$D|\mathcal U|m_{L,N}\le b_N^{\rm evt}$, respectively.
These bounds are conservative. Elapsed-time comparisons use the resulting
random $H_N$; a hard clock-time cutoff is a different stopping rule, discussed
after Theorem~\ref{thm:topk-guardian-retention}.

This procedure is gradient free.  Convexity and differentiability of $f$ are
not algorithmic requirements.  The price of this generality is that successful
refinement depends on a ranking-separation condition: an action whose
descendants contain a final optimizer must not be outranked by $K_\ell$ or more
competing non-guardian actions at a pruning stage.

\paragraph{Choosing the retention counts without knowing $f$.}
The algorithm can be run with retention counts specified before observing the
unknown objective: they express the breadth of refinement that the sampling
budget can afford. A budgeted pilot or sensitivity comparison can inform this
choice, provided its operating cost is counted. Such choices do not by
themselves certify the guardian-margin condition. That condition depends on
true parent costs and on which descendants are optimal. Without a relation
between parent values and descendant quality, a gap between observed coarse
estimates cannot certify that a pruned region contains no better descendant.
We therefore distinguish a usable fixed-$K$ algorithm from a guarantee that a
particular $K$ preserves an optimizer. The latter requires the structural
margin hypothesis, an appropriate audit, or additional coarse-to-fine bounds.
Retaining all parents avoids this pruning risk but increases sampling cost.

\paragraph{Operational interpretation of a guardian.}
At a coarse level, a parent represents a region rather than merely its own
current-grid point. We call a parent a guardian when that region contains a
refinement path to a master-grid optimizer. A guardian need not itself be the
best coarse action. Top-$K$ is useful precisely when several attractive
non-guardian parents can outrank the best guardian, either because coarse
discretization distorts descendant quality or because finite-sample SAA
rankings are noisy.

\section{Theoretical Guarantees}
\label{sec:theory}

The analysis has three layers. First, direct-SAA error is separated into
accumulated transient bias and martingale fluctuation. Second, queueing
structure is used to verify that these errors are negligible under persistent
fixed-$D$ operation. Third, stagewise SAA accuracy is converted into
hierarchical optimization through guardian retention. The first two layers
provide almost-sure foundations for direct SAA under persistent sampling;
they are not an almost-sure convergence claim for a finite run that permanently
discards actions. Section~\ref{subsec:stagewise-blocked-saa} instead analyzes
the implemented pruning scheme through finite-sample bounds and a family of
experiments whose stage targets grow across the prescribed schedules.

The principal results can be read in that order. Theorem~
\ref{thm:direct-saa-consistency} establishes consistency of the direct
ratio-of-totals estimator. Theorem~\ref{thm:transient-bias-blocks} explains how
contiguous action blocks reduce regenerative transient effects to boundary
terms. Lemma~\ref{lem:stagewise-blocked-saa} supplies the finite-sample
stagewise ranking bound used by Theorem~\ref{thm:topk-guardian-retention}.
Proposition~\ref{prop:topk-margin-comparison} then isolates the structural
Top-$K$ versus Top-1 margin comparison, and Proposition~
\ref{prop:mm1-complete-verification} closes the assumptions for the specified
$M/M/1$ workload implementation. Thus the section moves from estimator
validity, to queueing verification, to hierarchical decision quality.

\subsection{Direct-SAA consistency on a persistent trajectory}
\subsubsection{Conditional transient-bias decomposition}
\label{subsec:bias-decomp}

For cycle \(t\), define the conditional transient term
\begin{equation}
\beta_t
:=
\mathbb E
\left[
G_t-f(a_t)T_t
\mid
\mathcal H_{t-1}
\right].
\label{eq:conditional-transient}
\end{equation}
Since \(a_t\) is \(\mathcal H_{t-1}\)-measurable,
\begin{equation}
\beta_t
=
\mathbb E[G_t\mid\mathcal H_{t-1}]
-
f(a_t)\mathbb E[T_t\mid\mathcal H_{t-1}].
\label{eq:conditional-transient-interpretation}
\end{equation}
Thus \(\beta_t\) measures the predictable discrepancy between the expected
cost accumulated during the actual finite cycle and the cost predicted by
the steady-state rate \(f(a_t)\) over the corresponding operating time.
Because the initial state of the cycle is inherited from the preceding
system evolution, this discrepancy represents the finite-cycle transient,
or boundary, effect.

Define
\begin{equation}
M_t
:=
G_t-f(a_t)T_t-\beta_t.
\label{eq:martingale-innovation}
\end{equation}
By construction,
\begin{equation}
\mathbb E[M_t\mid\mathcal H_{t-1}]=0.
\label{eq:martingale-difference}
\end{equation}
Hence
\begin{equation}
G_t
=
f(a_t)T_t+\beta_t+M_t.
\label{eq:cycle-decomposition}
\end{equation}

For any fixed \(a\in\mathcal A\), multiplying
\eqref{eq:cycle-decomposition} by \(I_t(a)\) and summing over
\(t=1,\ldots,n\) gives
\[
C_n(a)
=
f(a)S_n(a)
+
\sum_{t=1}^n I_t(a)\beta_t
+
\sum_{t=1}^n I_t(a)M_t.
\]
Therefore, whenever \(S_n(a)>0\),
\begin{equation}
\widehat f_n(a)-f(a)
=
\frac{\sum_{t=1}^n I_t(a)\beta_t}{S_n(a)}
+
\frac{\sum_{t=1}^n I_t(a)M_t}{S_n(a)}.
\label{eq:saa-error-decomposition}
\end{equation}

The two terms in \eqref{eq:saa-error-decomposition} have distinct roles:
the first represents accumulated transient bias, while the second
represents zero-mean stochastic fluctuation.

\subsubsection{Consistency of the direct SAA estimator}
\label{subsec:saa-consistency}

We first state a basic consistency result under abstract conditions on the
two terms in \eqref{eq:saa-error-decomposition}.  Sufficient conditions
that are more directly verifiable in queueing models will be developed
subsequently.

\begin{theorem}[Consistency of the direct SAA estimator]
\label{thm:direct-saa-consistency}
Fix an action \(a\in\mathcal A\).  Suppose that the following conditions
hold almost surely.

\medskip
\noindent\textnormal{(A1)}
The cumulative operating time diverges:
\begin{equation}
S_n(a)\longrightarrow\infty.
\label{ass:sufficient-operating-time}
\end{equation}

\medskip
\noindent\textnormal{(A2)}
The accumulated conditional transient term is negligible relative to the
cumulative operating time:
\begin{equation}
\frac{\sum_{t=1}^n I_t(a)\beta_t}{S_n(a)}
\longrightarrow0.
\label{ass:vanishing-average-bias}
\end{equation}

\medskip
\noindent\textnormal{(A3)}
The accumulated martingale innovation is negligible relative to the
cumulative operating time:
\begin{equation}
\frac{\sum_{t=1}^n I_t(a)M_t}{S_n(a)}
\longrightarrow0.
\label{ass:martingale-ratio}
\end{equation}

Then
\begin{equation}
\widehat f_n(a)
\longrightarrow
f(a)
\qquad\text{almost surely}.
\label{eq:saa-consistency}
\end{equation}
\end{theorem}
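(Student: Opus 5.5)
The plan is to read the conclusion straight off the exact decomposition \eqref{eq:saa-error-decomposition}. No further probabilistic argument is needed, since (A2) and (A3) are stated directly as almost-sure limits of the two error ratios.

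\textbf{Step 1: the estimator is eventually defined.} Fix $a$ and let $\Omega_0$ be the intersection of the three almost-sure events in (A1)--(A3), so $\mathbb P(\Omega_0)=1$. Fix $\omega\in\Omega_0$. By (A1), $S_n(a)\to\infty$. Since $S_n(a)$ is nondecreasing in $n$ (each $T_t>0$ a.s.), there is a finite $n_0(\omega)$ with $S_n(a)>0$ for all $n\ge n_0(\omega)$. Hence $\widehat f_n(a)$ is well defined for all large $n$. Before $n_0$ the estimator may be left undefined or set arbitrarily; this does not affect the limit. One technical point should be noted. The almost-sure set on which $T_t>0$ for all $t$ is a countable intersection, so it also has probability one, and we intersect it into $\Omega_0$.

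\textbf{Step 2: apply the decomposition.} For $n\ge n_0(\omega)$, identity \eqref{eq:saa-error-decomposition} holds pathwise. It follows from summing \eqref{eq:cycle-decomposition}, which holds for every $t$ by the definition of $M_t$. The integrability assumed before the theorem, $\mathbb E[T_t+|G_t|]<\infty$ together with $\max_a|f(a)|<\infty$, guarantees that each $\beta_t$ is a well-defined finite conditional expectation. One fixes versions of these conditional expectations once, so the identity holds for every $\omega$ outside a null set, which we also remove from $\Omega_0$. Then
\[
|\widehat f_n(a)-f(a)|\le\Bigl|\frac{\sum_{t\le n}I_t(a)\beta_t}{S_n(a)}\Bigr|+\Bigl|\frac{\sum_{t\le n}I_t(a)M_t}{S_n(a)}\Bigr|,
\]
and by (A2) and (A3) both terms tend to zero on $\Omega_0$. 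This gives \eqref{eq:saa-consistency}.

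\textbf{Main obstacle.} The argument is essentially bookkeeping, and the only delicate point is measure-theoretic care. One must take a common probability-one set on which the conditional-expectation versions, the positivity of the $T_t$, and (A1)--(A3) all hold simultaneously. One must also interpret (A2)--(A3) as limits along $n\ge n_0$, where the ratios are defined. The substantive difficulty, namely verifying (A2) and (A3) for queueing models, is deferred to the later sufficient conditions and is not part of this proof.
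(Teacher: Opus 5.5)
Your proposal is correct and follows the paper's proof: (A1) makes $S_n(a)>0$ for all large $n$, so the exact error decomposition applies, and (A2)--(A3) send both normalized error terms to zero. Your extra bookkeeping is harmless but not needed, because $M_t$ is defined so that the cycle identity holds identically once a finite version of $\beta_t$ is fixed.
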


\begin{proof}
By \textnormal{(A1)}, $S_n(a)>0$ for all sufficiently large $n$, so the
estimator and the normalized error decomposition are eventually well defined.
Then \eqref{eq:saa-error-decomposition} together with
\textnormal{(A2)}--\textnormal{(A3)} gives
$\widehat f_n(a)-f(a)\to0$ almost surely.
\end{proof}

\begin{remark}[Discussion of the assumptions]
\label{rem:consistency-assumptions}
Condition (A1) is an information-accumulation requirement: action \(a\)
must be operated for an unbounded amount of total clock time.

Condition (A2) controls the systematic transient effect associated with
persistent operation.  Importantly, it does not require the individual
terms \(\beta_t\) to converge to zero.  Individual fixed-\(D\) cycles may
remain transiently biased; what is required is only that their accumulated
contribution become negligible per unit of total operating time under
action \(a\).

Condition (A3) concerns stochastic fluctuation rather than systematic
bias.  Since
\[
\mathbb E[M_t\mid\mathcal H_{t-1}]=0,
\]
it is a martingale law-of-large-numbers condition.  Under suitable moment
conditions and sufficient repeated sampling of action \(a\), (A3) can be
verified using standard martingale strong-law arguments.

An important point is that Theorem~\ref{thm:direct-saa-consistency}
does not require the number of designated events per cycle to increase
with \(n\).  Thus the fixed-cycle construction \(D_t\equiv D\) is
admissible.  Fixing \(D\), however, does not by itself eliminate
transient bias.  Because the queue is not reset between cycles, each
cycle begins from an inherited state, and the corresponding transient
term \(\beta_t\) need not vanish as \(t\to\infty\).  Condition (A2)
requires only that, for each action \(a\), these transient contributions
become negligible after accumulation and normalization by the total
operating time \(S_n(a)\).

Subsection~\ref{sec:queue-verification} develops queueing-specific sufficient conditions for
(A2) and (A3).  Martingale arguments are used to control the stochastic
fluctuation in (A3), while a regenerative bias representation is used
to show how transient contributions in (A2) telescope over contiguous
operating blocks of a fixed action.
\end{remark}

\subsubsection{Uniform consistency and empirical-leader identification}
\label{subsec:uniform-leader}

Because the action set \(\mathcal A\) is finite, actionwise consistency
immediately implies uniform consistency over \(\mathcal A\).

\begin{corollary}[Uniform consistency over a finite action set]
\label{cor:uniform-consistency}
Suppose that the conditions of
Theorem~\ref{thm:direct-saa-consistency} hold for every
\(a\in\mathcal A\).  Then
\begin{equation}
\max_{a\in\mathcal A}
\left|
\widehat f_n(a)-f(a)
\right|
\longrightarrow 0
\qquad
\text{almost surely}.
\label{eq:uniform-consistency}
\end{equation}
\end{corollary}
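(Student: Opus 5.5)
The plan is to reduce the uniform statement to the actionwise one, using nothing beyond Theorem~\ref{thm:direct-saa-consistency} and the finiteness of $\mathcal A=\{a_1,\ldots,a_M\}$. For each $a\in\mathcal A$, the hypotheses (A1)--(A3) hold almost surely. So there is an event $\Omega_a$ with $\mathbb P(\Omega_a)=1$ on which $S_n(a)\to\infty$ and, by the theorem, $\widehat f_n(a)\to f(a)$.

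I will intersect these events, setting $\Omega^*:=\bigcap_{a\in\mathcal A}\Omega_a$. Because $M<\infty$, the union bound gives
\[
\mathbb P\bigl((\Omega^*)^c\bigr)\le\sum_{a\in\mathcal A}\mathbb P(\Omega_a^c)=0 .
\]

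Next, fix $\omega\in\Omega^*$. For each $a$ there is an $n_a(\omega)$ with $S_n(a)>0$ for all $n\ge n_a(\omega)$, by (A1). Hence for $n\ge\max_a n_a(\omega)$ every estimator $\widehat f_n(a)$ is defined, and the maximum in \eqref{eq:uniform-consistency} is a well-defined finite number. This is the only point that needs care, because the statement involves all estimators at the same time.

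Given $\varepsilon>0$, pointwise convergence supplies an $N_a(\omega,\varepsilon)$ for each $a$ beyond which $|\widehat f_n(a)-f(a)|<\varepsilon$. Taking the maximum of these finitely many indices gives
\[
\max_{a\in\mathcal A}|\widehat f_n(a)-f(a)|<\varepsilon
\]
for all large $n$, which proves \eqref{eq:uniform-consistency} on $\Omega^*$.

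I do not expect a genuine obstacle. The only points to state explicitly are that finiteness of $\mathcal A$ is what allows a countable (here finite) intersection of probability-one events, and that it keeps the maximum of finitely many thresholds finite.
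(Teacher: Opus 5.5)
Your proposal is correct and follows the paper's proof: apply Theorem~\ref{thm:direct-saa-consistency} to each action, intersect the finitely many probability-one events, and on that intersection take the maximum of finitely many thresholds. Your explicit check that all estimators are defined simultaneously for large $n$ is harmless; the paper leaves that point to the theorem's proof via (A1).
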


\begin{proof}
For every $a\in\mathcal A$, Theorem~\ref{thm:direct-saa-consistency} gives
\[
\widehat f_n(a)-f(a)\longrightarrow0
\qquad\text{almost surely}.
\]
Since $\mathcal A$ is finite, these finitely many almost-sure convergence
events have probability one jointly, and on their intersection the maximum
of the finitely many absolute errors also converges to zero.
\end{proof}

Define the optimal value over the finite action set by
\begin{equation}
f_{\mathcal A}^*
:=
\min_{a\in\mathcal A} f(a),
\label{eq:finite-optimal-value}
\end{equation}
and let
\begin{equation}
\mathcal A^*
:=
\left\{
a\in\mathcal A:
f(a)=f_{\mathcal A}^*
\right\}
\label{eq:finite-optimal-set}
\end{equation}
denote the set of optimal actions.

Whenever \(S_n(a)>0\) for every \(a\in\mathcal A\), define an empirical
leader by
\[
\widehat a_n
\in
\operatorname*{argmin}_{a\in\mathcal A}\widehat f_n(a).
\]
Under the assumptions of Corollary~\ref{cor:uniform-consistency},
the empirical leader is therefore well defined for all sufficiently large
\(n\), almost surely.

\begin{corollary}[Consistency of the empirical leader]
\label{cor:empirical-leader}
Suppose that the conditions of
Theorem~\ref{thm:direct-saa-consistency} hold for every
\(a\in\mathcal A\).  Then, almost surely,
\begin{equation}
\widehat a_n\in\mathcal A^*
\qquad
\text{for all sufficiently large } n.
\label{eq:eventual-optimal-set}
\end{equation}

In particular, if the finite-action problem has a unique optimizer
\(a_{\mathcal A}^*\), then
\begin{equation}
\widehat a_n
=
a_{\mathcal A}^*
\qquad
\text{for all sufficiently large } n,
\quad\text{almost surely}.
\label{eq:eventual-unique-leader}
\end{equation}
\end{corollary}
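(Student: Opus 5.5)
The plan is to deduce this directly from Corollary~\ref{cor:uniform-consistency} using a finite-gap argument. If $\mathcal A^*=\mathcal A$ there is nothing to prove, because every empirical leader is optimal. Otherwise the suboptimality gap
\[
\Delta:=\min_{a\in\mathcal A\setminus\mathcal A^*} f(a)-f_{\mathcal A}^*
\]
is strictly positive, since $\mathcal A$ is finite and each $f(a)$ is a finite real number. This positivity is the only structural input needed.

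First, fix the almost-sure event on which the conclusion of Corollary~\ref{cor:uniform-consistency} holds. On this event, (A1) also holds for every $a\in\mathcal A$, again by a finite intersection of probability-one events. Hence there is a random $n_0$ such that $S_n(a)>0$ for all $a$ and all $n\ge n_0$, so the leader $\widehat a_n$ is well defined from $n_0$ on.

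Second, by uniform consistency there is a random $n_1\ge n_0$ such that
\[
\max_{a\in\mathcal A}|\widehat f_n(a)-f(a)|<\Delta/2 \quad\text{for all } n\ge n_1 .
\]
Take any $n\ge n_1$, any $b\in\mathcal A\setminus\mathcal A^*$, and any $a^*\in\mathcal A^*$. Then
\[
\widehat f_n(b)>f(b)-\Delta/2\ge f_{\mathcal A}^*+\Delta/2>\widehat f_n(a^*).
\]
So no suboptimal action can attain the empirical minimum. Whatever tie-breaking rule selects $\widehat a_n$, it must lie in $\mathcal A^*$, which gives \eqref{eq:eventual-optimal-set}. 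The unique-optimizer statement \eqref{eq:eventual-unique-leader} follows immediately, because in that case $\mathcal A^*=\{a_{\mathcal A}^*\}$.

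There is no real obstacle here. The only points needing care are bookkeeping ones. One is that the index $n_1$ is random, so the result is an eventual statement pathwise on a single full-measure event. The other is that the strict inequality must hold uniformly over the arbitrary argmin selection, including ties among optimal actions. Both are handled by working on the fixed intersection event and using the strict $\Delta/2$ margin.
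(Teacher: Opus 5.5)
Your proposal is correct and follows essentially the same route as the paper: in both, the case $\mathcal A^*=\mathcal A$ is trivial, the positive gap $\Delta$ comes from finiteness, and uniform consistency at tolerance $\Delta/2$ excludes every suboptimal action from the empirical argmin. Your explicit use of (A1) to make $\widehat a_n$ eventually well defined matches the paper's remark preceding the corollary.
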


\begin{proof}
If every action is optimal, then
\(\mathcal A^*=\mathcal A\), and the conclusion is immediate.
Otherwise, since \(\mathcal A\) is finite, the optimality gap
\begin{equation}
\Delta_{\mathcal A}
:=
\min_{a\notin\mathcal A^*}
\left\{
f(a)-f_{\mathcal A}^*
\right\}
\label{eq:finite-action-gap}
\end{equation}
is strictly positive.

By Corollary~\ref{cor:uniform-consistency}, almost surely there exists a
finite random index \(N\) such that, for all \(n\geq N\),
\[
\max_{a\in\mathcal A}
\left|
\widehat f_n(a)-f(a)
\right|
<
\frac{\Delta_{\mathcal A}}{2}.
\]
Fix such an \(n\).  For any \(a\notin\mathcal A^*\),
\[
\widehat f_n(a)
>
f(a)-\frac{\Delta_{\mathcal A}}{2}
\geq
f_{\mathcal A}^*
+
\frac{\Delta_{\mathcal A}}{2}.
\]
On the other hand, for any \(a^*\in\mathcal A^*\),
\[
\widehat f_n(a^*)
<
f_{\mathcal A}^*
+
\frac{\Delta_{\mathcal A}}{2}.
\]
Hence no action outside \(\mathcal A^*\) can minimize
\(\widehat f_n(\cdot)\).  Therefore
\[
\widehat a_n\in\mathcal A^*
\]
for all sufficiently large \(n\), almost surely.  If
\(\mathcal A^*=\{a_{\mathcal A}^*\}\), the unique-optimum conclusion
follows immediately.
\end{proof}

\begin{remark}[Finite-action versus continuous optimality]
\label{rem:grid-versus-continuous}
Corollary~\ref{cor:empirical-leader} establishes identification of an
optimizer over the finite action set \(\mathcal A\).  When \(\mathcal A\)
discretizes an underlying continuous decision space, an additional
discretization error remains.  This error is treated separately in the
continuous-space approximation analysis of the coarse-to-fine procedure.
\end{remark}


\subsection{Queueing verification of the direct-SAA conditions}
\label{sec:queue-verification}

The preceding results isolate the two quantities that must be controlled:
accumulated transient bias and martingale fluctuation.  We now give
queueing-specific sufficient conditions.  The main device is a regenerative
bias representation that is compatible with fixed-$D$ learning cycles even
though those learning cycles are not regeneration cycles.  The resulting
boundary terms telescope within contiguous operating blocks, while standard
martingale averaging controls centered fluctuations.

\subsubsection{Martingale averaging over adaptively selected cycles}
\label{subsec:martingale-averaging}

We next give a convenient sufficient condition for the martingale
requirement (A3). For \(a\in\mathcal A\), let
\begin{equation}
N_n(a)
:=
\sum_{t=1}^n I_t(a)
\label{eq:action-count}
\end{equation}
denote the number of times action \(a\) has been selected up to cycle \(n\).

For \(k\geq1\), define
\begin{equation}
\tau_k(a)
:=
\inf\{t\geq1:N_t(a)=k\},
\label{eq:kth-visit-time}
\end{equation}
with the convention that \(\inf\varnothing=\infty\). Thus
\(\tau_k(a)\) is the cycle index of the \(k\)-th selection of action \(a\),
whenever that selection occurs.

The following lemma is stated for a generic martingale-difference
sequence, so that it can be used for more than one stochastic-error term
in the subsequent analysis.

\begin{lemma}[Martingale averaging along selected cycles]
\label{lem:martingale-averaging}
Let \(\{Z_t\}_{t\geq1}\) be adapted to
\(\{\mathcal H_t\}_{t\geq0}\) and satisfy
\begin{equation}
\mathbb E[Z_t\mid\mathcal H_{t-1}]=0,
\qquad t\geq1.
\label{eq:generic-martingale-difference}
\end{equation}
Fix \(a\in\mathcal A\).  Suppose that, almost surely,

\medskip
\noindent\textnormal{(M1)}
action \(a\) is selected infinitely often, in the sense that
\begin{equation}
N_n(a)\longrightarrow\infty
\qquad\text{as } n\to\infty.
\label{ass:infinite-visits}
\end{equation}
Under \textnormal{(M1)}, \(\tau_k(a)<\infty\) for every \(k\geq1\)
almost surely.

\medskip
\noindent\textnormal{(M2)}
the average clock duration of cycles operated under \(a\) is bounded away
from zero asymptotically:
\begin{equation}
\liminf_{m\to\infty}
\frac{1}{m}
\sum_{k=1}^m T_{\tau_k(a)}
>0;
\label{ass:positive-average-duration}
\end{equation}

\medskip

\noindent\textnormal{(M3)}
the conditional second moments are uniformly bounded along cycles in which
action \(a\) is selected: there exists a constant \(C_a<\infty\) such that
\begin{equation}
\mathbb E
\left[
Z_t^2
\mid
\mathcal H_{t-1}
\right]
\leq C_a
\qquad
\text{a.s. on the event }\{a_t=a\},
\quad t\geq1.
\label{ass:uniform-selected-second-moment}
\end{equation}

Then
\begin{equation}
\frac{
\sum_{t=1}^n I_t(a)Z_t
}{
S_n(a)
}
\longrightarrow0
\qquad\text{almost surely}.
\label{eq:martingale-selected-ratio}
\end{equation}
\end{lemma}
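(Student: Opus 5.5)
The plan is to split the ratio as
\[
\frac{\sum_{t=1}^n I_t(a)Z_t}{S_n(a)}
=
\frac{\sum_{t=1}^n I_t(a)Z_t}{N_n(a)}\cdot\frac{N_n(a)}{S_n(a)}.
\]
By (M1) and (M2), $N_n(a)/S_n(a)$ stays almost surely bounded for large $n$. Indeed, $S_n(a)=\sum_{k=1}^{N_n(a)}T_{\tau_k(a)}$ and $N_n(a)\to\infty$, so
\[
\liminf_n \frac{S_n(a)}{N_n(a)}\ \ge\ \liminf_m \frac1m\sum_{k=1}^m T_{\tau_k(a)}>0.
\]
It therefore suffices to show that $N_n(a)^{-1}\sum_{t\le n}I_t(a)Z_t\to0$ almost surely.

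The core step is a martingale strong law along the selected cycles. Define
\[
W_n:=\sum_{t=1}^n \frac{I_t(a)Z_t}{\max\{N_t(a),1\}}.
\]
Because $a_t$ is $\mathcal H_{t-1}$-measurable, $N_t(a)=N_{t-1}(a)+I_t(a)$ is also $\mathcal H_{t-1}$-measurable. Hence the weight $I_t(a)/\max\{N_t(a),1\}$ is predictable, and $\{W_n\}$ is a martingale transform of $\{Z_t\}$ with $\mathbb E[\,\cdot\mid\mathcal H_{t-1}]$-increments equal to zero. Its conditional variance process satisfies
\[
\sum_t \mathbb E\!\left[\frac{I_t(a)Z_t^2}{N_t(a)^2}\,\Big|\,\mathcal H_{t-1}\right]\le C_a\sum_t\frac{I_t(a)}{N_t(a)^2}\le C_a\sum_{k\ge1}\frac1{k^2}<\infty
\]
by (M3). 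On the cycles with $I_t(a)=1$, $N_t(a)$ runs through $1,2,\dots$ exactly once, so the last sum is bounded by $\pi^2/6$ deterministically.

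Integrability needs a little care, since $Z_t$ is only assumed to have bounded conditional second moment on $\{a_t=a\}$. I would either note that $\mathbb E[I_t(a)Z_t^2]\le C_a$ by the tower property, which makes $W_n$ a genuine $L^2$ martingale with $\sup_n\mathbb E W_n^2\le C_a\pi^2/6$, or else invoke the conditional-variance convergence theorem (a local martingale converges a.s. on the event where its predictable quadratic variation is finite). Either route yields $W_n\to W_\infty$ almost surely.

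Finally, apply Kronecker's lemma pathwise. Relabel along the selection times: $W$ is constant off selected cycles, and along $\tau_k(a)$ the series $\sum_k Z_{\tau_k(a)}/k$ converges because its partial sums are $W_{\tau_m(a)}$. Since $k\uparrow\infty$ by (M1), Kronecker gives $m^{-1}\sum_{k=1}^m Z_{\tau_k(a)}\to0$. For general $n$ between selection times the numerator and $N_n(a)$ are unchanged, so $N_n(a)^{-1}\sum_{t\le n}I_t(a)Z_t\to0$. Combining this with the boundedness of $N_n(a)/S_n(a)$ gives \eqref{eq:martingale-selected-ratio}. The only delicate point I anticipate is the integrability/measurability justification in the martingale convergence step; the predictable-weight argument avoids any need for optional sampling at the random times $\tau_k(a)$, which may be infinite on a null set.
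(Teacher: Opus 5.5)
Your proof is correct and follows essentially the paper's argument: almost-sure convergence of $\sum_k Z_{\tau_k(a)}/k$ from the summable conditional variances, Kronecker's lemma along the selected cycles, and then (M1)--(M2) to pass from normalization by $N_n(a)$ to normalization by $S_n(a)$. The only difference is technical. The paper time-changes to the filtration $\mathcal H_{\tau_k(a)}$ and uses predictability of $\tau_k(a)$ to show that $Z_{\tau_k(a)}$ is a martingale difference. You instead stay in the original filtration with the predictable weight $I_t(a)/N_t(a)$, the same device the paper uses for $V_t(a)$ in the proof of Theorem~\ref{thm:transient-bias-blocks}, so the integrability check reduces cleanly to $\mathbb E[I_t(a)Z_t^2]\le C_a$.
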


\begin{proof}
Because $a_t$ is $\mathcal H_{t-1}$-measurable, the selection times
$\tau_k(a)$ are predictable: for every $t$,
$\{\tau_k(a)=t\}\in\mathcal H_{t-1}$.  Consequently the martingale-
difference property can be evaluated at the random selected index.  More
explicitly, if $H$ is bounded and
$\mathcal H_{\tau_k(a)-1}$-measurable, then
\begin{align*}
\mathbb E[H Z_{\tau_k(a)}]
&=
\sum_{t\geq1}
\mathbb E\!\left[H\mathbf 1\{\tau_k(a)=t\}Z_t\right]\\
&=
\sum_{t\geq1}
\mathbb E\!\left[
H\mathbf 1\{\tau_k(a)=t\}
\mathbb E[Z_t\mid\mathcal H_{t-1}]
\right]
=0.
\end{align*}
Thus
\[
\mathbb E[Z_{\tau_k(a)}\mid\mathcal H_{\tau_k(a)-1}]=0.
\]
The same predictable-index argument and \textnormal{(M3)} give
\[
\mathbb E[Z_{\tau_k(a)}^2\mid\mathcal H_{\tau_k(a)-1}]
\leq C_a.
\]

Now let
\[
\mathcal G_k(a)
:=
\mathcal H_{\tau_k(a)},
\qquad k\geq1,
\]
with $\mathcal G_0(a):=\mathcal H_0$, and define
\[
Y_k(a):=Z_{\tau_k(a)}.
\]
Since
$\mathcal G_{k-1}(a)\subseteq\mathcal H_{\tau_k(a)-1}$, the tower property
shows that $\{Y_k(a)\}_{k\geq1}$ is a martingale-difference sequence with
respect to $\{\mathcal G_k(a)\}_{k\geq0}$ and that
\[
\mathbb E
\left[
Y_k(a)^2\mid\mathcal G_{k-1}(a)
\right]
\leq C_a.
\]
Consequently,
\[
\sum_{k=1}^{\infty}
\frac{
\mathbb E[
Y_k(a)^2\mid\mathcal G_{k-1}(a)]
}{
k^2
}
\leq
C_a\sum_{k=1}^{\infty}\frac{1}{k^2}
<\infty.
\]
It follows that the martingale series
\[
\sum_{k=1}^{\infty}\frac{Y_k(a)}{k}
\]
converges almost surely.  Kronecker's lemma therefore gives
\begin{equation}
\frac{1}{m}
\sum_{k=1}^m Y_k(a)
=
\frac{1}{m}
\sum_{k=1}^m Z_{\tau_k(a)}
\longrightarrow0
\qquad\text{almost surely}.
\label{eq:selected-martingale-slln}
\end{equation}

For \(m=N_n(a)\),
\[
\sum_{t=1}^n I_t(a)Z_t
=
\sum_{k=1}^{m}Z_{\tau_k(a)}
\]
and
\[
S_n(a)
=
\sum_{k=1}^{m}T_{\tau_k(a)}.
\]
Therefore
\[
\frac{
\sum_{t=1}^n I_t(a)Z_t
}{
S_n(a)
}
=
\frac{
m^{-1}\sum_{k=1}^{m}Z_{\tau_k(a)}
}{
m^{-1}\sum_{k=1}^{m}T_{\tau_k(a)}
}.
\]
The numerator converges to zero by
\eqref{eq:selected-martingale-slln}, while the denominator is bounded
away from zero asymptotically by (M2).  Since \(N_n(a)\to\infty\) by
(M1), the result follows.
\end{proof}

\begin{corollary}[A sufficient condition for (A3)]
\label{cor:martingale-condition-A3}
Fix \(a\in\mathcal A\).  Suppose that
\[
N_n(a)\longrightarrow\infty
\qquad\text{almost surely},
\]
and
\begin{equation}
\liminf_{n\to\infty}
\frac{S_n(a)}{N_n(a)}
>0
\qquad\text{almost surely}.
\label{ass:positive-average-operating-time}
\end{equation}
Suppose further that there exists \(C_a<\infty\) such that
\begin{equation}
\mathbb E
\left[
M_t^2\mid\mathcal H_{t-1}
\right]
\leq C_a
\qquad
\text{on }\{a_t=a\}.
\label{ass:martingale-second-moment}
\end{equation}
Then
\begin{equation}
\frac{
\sum_{t=1}^n I_t(a)M_t
}{
S_n(a)
}
\longrightarrow0
\qquad\text{almost surely}.
\end{equation}
Thus condition \textnormal{(A3)} of
Theorem~\ref{thm:direct-saa-consistency} holds.
\end{corollary}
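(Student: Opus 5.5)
The plan is to apply Lemma~\ref{lem:martingale-averaging} with $Z_t:=M_t$, so the work lies in checking the lemma's hypotheses.

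\emph{Adaptedness and the martingale-difference property.} By \eqref{eq:martingale-innovation}, $M_t=G_t-f(a_t)T_t-\beta_t$.
\begin{itemize}
\item $G_t$ and $T_t$ are $\mathcal H_t$-measurable, since the cycle outcomes are recorded in the analysis filtration.
\item $a_t$ is $\mathcal H_{t-1}$-measurable, so $f(a_t)$ is as well; $\beta_t$ is a conditional expectation given $\mathcal H_{t-1}$. Hence $M_t$ is $\mathcal H_t$-measurable.
\item Integrability comes from the standing first-moment assumption, because $\max_a|f(a)|<\infty$.
\item The identity $\mathbb E[M_t\mid\mathcal H_{t-1}]=0$ is \eqref{eq:martingale-difference}.
\end{itemize}

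\emph{(M1) and (M3).} Hypothesis (M1) is exactly the assumption $N_n(a)\to\infty$ almost surely. Hypothesis (M3) is exactly \eqref{ass:martingale-second-moment} with $Z_t=M_t$.

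\emph{(M2).} This is the only step needing any translation, and it is a reindexing rather than a real obstacle. On the almost-sure event where $N_n(a)\to\infty$, every $\tau_k(a)$ is finite. Put $R_m:=m^{-1}\sum_{k=1}^m T_{\tau_k(a)}$. Since $N_n(a)$ is nondecreasing, moves in unit steps and is unbounded, every $m\ge1$ equals $N_n(a)$ for the choice $n=\tau_m(a)$. At that $n$ we have $S_n(a)=\sum_{k=1}^{m}T_{\tau_k(a)}$, so
\[
\frac{S_{\tau_m(a)}(a)}{N_{\tau_m(a)}(a)}=R_m .
\]
The indices $\tau_m(a)$ increase to infinity, so $(R_m)$ is a subsequence of $S_n(a)/N_n(a)$. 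A subsequence's $\liminf$ is at least the full sequence's $\liminf$. Therefore \eqref{ass:positive-average-operating-time} gives $\liminf_m R_m\ge\liminf_n S_n(a)/N_n(a)>0$ almost surely, which is (M2).

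\emph{Conclusion.} With (M1)--(M3) verified, Lemma~\ref{lem:martingale-averaging} yields $\sum_{t\le n}I_t(a)M_t/S_n(a)\to0$ almost surely, which is (A3). The ratio is eventually well defined: (M1) and (M2) together force $S_n(a)\to\infty$, so $S_n(a)>0$ for all large $n$.
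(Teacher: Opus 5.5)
Your proposal is correct and follows the paper's proof, which likewise applies Lemma~\ref{lem:martingale-averaging} with $Z_t=M_t$ and reads (M1)--(M3) off the stated hypotheses. Your subsequence argument, $S_{\tau_m(a)}(a)/N_{\tau_m(a)}(a)=m^{-1}\sum_{k=1}^m T_{\tau_k(a)}$, and your adaptedness check spell out steps that the paper's one-line proof asserts without detail.
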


\begin{proof}
Apply Lemma~\ref{lem:martingale-averaging} with \(Z_t=M_t\).
Condition \eqref{eq:martingale-difference} gives the required
martingale-difference property, while
\eqref{ass:positive-average-operating-time} and
\eqref{ass:martingale-second-moment} give (M2) and (M3), respectively.
\end{proof}

\begin{remark}
The uniform conditional second-moment bound in
\eqref{ass:uniform-selected-second-moment} is a convenient sufficient
condition rather than a minimal one.  It can be replaced by more general
martingale summability conditions when the cycle-level fluctuations are
not uniformly square integrable.  The present form is useful because it
makes the distinction between information accumulation and stochastic
averaging transparent.
\end{remark}

\subsubsection{Regenerative bias representation of a fixed-\(D\) cycle}
\label{subsec:regenerative-cycle}

The main difficulty in verifying the transient-bias condition
\textnormal{(A2)} is that, under fixed-\(D\) operation, individual learning
cycles need not become asymptotically unbiased: each cycle begins from an
inherited state.  The key is therefore to represent the centered cost of
each cycle as a difference of state-dependent boundary terms plus a
martingale innovation.  When the same action is operated over consecutive
cycles, these boundary terms telescope, leaving only the states at the
beginning and end of the corresponding operating block.

We now justify this cycle-level boundary representation under explicit
regenerative and filtration-compatibility assumptions, without imposing a
continuous-time Markov-chain or parametric distributional model.
Fix an action \(a\in\mathcal A\), and let
\(\{\mathcal F_s^a\}_{s\geq0}\) be the filtration used to describe its
continuous operation.  We require the regeneration construction to be
compatible with this filtration, including the information observed by the
learning procedure.
Let
\[
0<R_1^a<R_2^a<\cdots
\]
be almost surely finite stopping times increasing to infinity.  These are
the regeneration epochs strictly after time zero, even when the initial
state is itself regenerative.  Conditional on \(\mathcal F_{R_j^a}^a\),
the next complete regenerative-cycle evolution, including its duration,
cost increment, and designated-event count, is assumed to have a common
law independent of \(\mathcal F_{R_j^a}^a\).  The same law is assumed for
the first complete cycle when the system is initialized in a regeneration
state.  An accessible regeneration atom with a compatible observation
filtration is a sufficient setting for these requirements.

We use \(X^a(s)\) as a complete state descriptor of the system; it need
not be a CTMC.  In non-Markovian queues it may include residual or elapsed
interarrival and service information.  Its state convention and the
allocation of any costs or designated events at a regeneration epoch must
agree with the filtration and cycle convention above.  In particular,
unconditional independence of successive cycle rewards is not a substitute
for the stated conditional restart property.

Let \(\mathcal C_a(s)\) denote cumulative direct cost under continuous
operation of action \(a\), normalized by \(\mathcal C_a(0)=0\).
Any initial-time charge is assigned consistently with this origin convention;
costs at subsequent boundaries follow the increment convention below.
Recall that the
designated events are the queueing events chosen to define the fixed-\(D\)
learning cycles, such as arrivals, service completions, or departures.
Let \(N_a^{\circ}(s)\) denote the counting process of these designated
events under action \(a\); that is, \(N_a^{\circ}(s)\) is the number of
designated events occurring over \((0,s]\), with \(N_a^{\circ}(0)=0\).
The superscript \(\circ\) distinguishes this event-counting process from
the action-selection count \(N_n(a)\) introduced earlier.
For
an ordinary regenerative cycle, define
\begin{align}
L_j^a
&:=R_{j+1}^a-R_j^a,
\label{eq:regen-cycle-length}
\\
Y_j^a
&:=\mathcal C_a(R_{j+1}^a)-\mathcal C_a(R_j^a),
\label{eq:regen-cycle-cost}
\\
D_j^a
&:=N_a^{\circ}(R_{j+1}^a)-N_a^{\circ}(R_j^a).
\label{eq:regen-cycle-events}
\end{align}
By the conditional restart property,
\[
\{(L_j^a,Y_j^a,D_j^a)\}_{j\geq1}
\]
are i.i.d. For this regenerative verification, assume that $\mathcal C_a$
has locally finite-variation paths and define the absolute accumulated
cost of a complete cycle by
\[
Y_{j,\mathrm{abs}}^a
:=\int_{(R_j^a,R_{j+1}^a]}|d\mathcal C_a|(s),
\qquad |Y_j^a|\le Y_{j,\mathrm{abs}}^a,
\]
where $|d\mathcal C_a|$ is the total variation of the signed cost measure.
For running costs and event charges, this accumulates the absolute running
rate and the absolute cost jump at each event. Assume
\begin{equation}
0<\mathbb E[L_1^a]<\infty,
\qquad
\mathbb E[Y_{1,\mathrm{abs}}^a]<\infty.
\label{ass:regen-first-moments}
\end{equation}
This controls positive and negative within-cycle contributions before
they cancel; integrability of the absolute net reward alone would not.
The renewal-reward theorem then identifies the steady-state expected cost rate
\begin{equation}
f(a)
=
\frac{\mathbb E[Y_1^a]}{\mathbb E[L_1^a]}.
\label{eq:regen-cost-rate}
\end{equation}
Consequently, the centered regenerative-cycle rewards
\begin{equation}
Z_j^a
:=
Y_j^a-f(a)L_j^a
\label{eq:centered-regen-reward}
\end{equation}
are i.i.d. and satisfy \(\mathbb E[Z_j^a]=0\).
Absolute cost is used for integrability only: the rewards, residual costs,
and bias function in the decomposition continue to use signed net costs.

For a state \(x\) that can occur at a learning-cycle boundary, let
\(\sigma_a(x)\) denote the time from state \(x\) to the next strictly
positive regeneration epoch.  Thus, when \(x\) is a regeneration state,
this time covers one complete regenerative cycle.
Let
\begin{equation}
B_a(x)
:=
\mathcal C_a(\sigma_a(x))-f(a)\sigma_a(x)
\label{eq:regen-residual-centered-cost}
\end{equation}
denote the centered cost accumulated over this residual portion of the
regenerative cycle when the system is started from \(x\).  We assume
\begin{equation}
\mathbb E_x[|B_a(x)|]<\infty
\label{ass:regen-residual-integrability}
\end{equation}
for every possible learning-cycle boundary state.  Define the regenerative
bias function
\begin{equation}
g_a(x)
:=
\mathbb E_x[B_a(x)].
\label{eq:regen-bias-function}
\end{equation}
At a regeneration state, the assumed conditional restart law gives
\(g_a=0\), because the ensuing full cycle has mean centered reward zero
conditional on that state.  This is a consequence of the compatibility
assumption, not of an unconditional mean-zero cycle reward alone.

Let
\begin{equation}
\tau_D^a
:=
\inf\{s\geq0:N_a^{\circ}(s)\geq D\}
\label{eq:fixed-D-stopping-time}
\end{equation}
be the random clock time required to observe \(D\) designated events under
action \(a\).  The learning cycle itself is not assumed to begin or end at
a regeneration epoch.

We also impose the following stopping-time residual-state property.
For the almost surely finite stopping times $\tau$ used below, whenever
the centered residual cost to the next regeneration is integrable, assume
\begin{equation}
\mathbb E_x^a\!\left[
\mathcal C_a(R^+(\tau))-\mathcal C_a(\tau)
-f(a)\{R^+(\tau)-\tau\}
\,\middle|\,
\mathcal F_\tau^a
\right]
=
g_a(X^a(\tau)),
\label{ass:stopping-time-residual-state}
\end{equation}
where $R^+(\tau)$ is the first regeneration epoch at or after $\tau$.
When $\tau$ is itself a regeneration epoch, the residual on the left is
zero and agrees with $g_a(X^a(\tau))=0$.  Thus the at-or-after convention
in this identity is consistent with the strictly positive regeneration
time used to define $B_a$ and $g_a$.  Away from regeneration epochs, both
conventions refer to the same next regeneration.  A strong Markov model
with an accessible regeneration atom and compatible state, cost, and event
conventions can establish this property.  Strong Markovness alone, however,
does not imply it for an arbitrary random regeneration state or observation
filtration; \eqref{ass:stopping-time-residual-state} remains an explicit
assumption here.

\begin{proposition}[Regenerative representation of a fixed-\(D\) cycle]
\label{prop:regenerative-fixed-D}
Fix $a\in\mathcal A$, and suppose that, under continuous operation of
action $a$, the regeneration epochs and conditional restart law are
compatible with $\{\mathcal F_s^a\}$ as specified above, and that
\eqref{ass:regen-first-moments},
\eqref{ass:regen-residual-integrability}, and the stopping-time residual-state
property \eqref{ass:stopping-time-residual-state} hold.

Assume in addition that each complete regenerative cycle contains at least
one designated event,
\begin{equation}
D_j^a\geq1
\qquad\text{almost surely},
\label{ass:regen-event-positive}
\end{equation}
and that
\[
\tau_D^a<\infty
\qquad\text{almost surely}.
\]

Suppose also that, for every possible initial state
\(x=X^a(0)\),
\begin{equation}
\mathbb E_x\!\left[
\left|
\mathcal C_a(\tau_D^a)-f(a)\tau_D^a
\right|
\right]
<\infty,
\label{ass:fixed-D-centered-cost-integrability}
\end{equation}
and that the actual centered residual cost from the terminal time to
$R^+(\tau_D^a)$ is integrable:
\begin{equation}
\mathbb E_x\!\left[
\left|
\mathcal C_a(R^+(\tau_D^a))-\mathcal C_a(\tau_D^a)
-f(a)\{R^+(\tau_D^a)-\tau_D^a\}
\right|
\right]
<\infty.
\label{ass:terminal-residual-integrability}
\end{equation}
Then
\begin{equation}
\mathcal C_a(\tau_D^a)-f(a)\tau_D^a
=
g_a(X^a(0))-g_a(X^a(\tau_D^a))
+U_a(\tau_D^a),
\label{eq:stopped-regenerative-representation}
\end{equation}
where \(U_a(\tau_D^a)\) is integrable and, for every possible initial
state \(x\),
\begin{equation}
\mathbb E_x[U_a(\tau_D^a)]=0.
\label{eq:stopped-regenerative-zero-mean}
\end{equation}
\end{proposition}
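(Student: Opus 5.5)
The plan is to take \eqref{eq:stopped-regenerative-representation} as the \emph{definition} of $U_a(\tau_D^a)$, i.e.
\[
U_a(\tau_D^a):=\mathcal C_a(\tau_D^a)-f(a)\tau_D^a-g_a(X^a(0))+g_a(X^a(\tau_D^a)),
\]
and then prove integrability and \eqref{eq:stopped-regenerative-zero-mean}. To do so, I will split the centered cost over $[0,\tau_D^a]$ along regeneration epochs. Write $\tau:=\tau_D^a$, $x=X^a(0)$, $R_1^a=\sigma_a(x)$, and let $J$ be the index with $R_J^a=R^+(\tau)$. Adding and subtracting the centered residual from $\tau$ to $R^+(\tau)$ gives the pathwise identity
\[
\mathcal C_a(\tau)-f(a)\tau
= B_a(x)+\sum_{j=1}^{J-1} Z_j^a-\Bigl[\mathcal C_a(R^+(\tau))-\mathcal C_a(\tau)-f(a)\{R^+(\tau)-\tau\}\Bigr].
\]
This identity covers both cases $\tau<R_1^a$ (where $J=1$ and the sum is empty) and $\tau\ge R_1^a$. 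Subtracting $g_a(x)$ and adding $g_a(X^a(\tau))$ yields
\[
U_a(\tau)=\underbrace{\{B_a(x)-g_a(x)\}}_{(\mathrm{i})}+\underbrace{\sum_{j=1}^{J-1}Z_j^a}_{(\mathrm{ii})}-\underbrace{\Bigl\{\mathcal C_a(R^+(\tau))-\mathcal C_a(\tau)-f(a)\{R^+(\tau)-\tau\}-g_a(X^a(\tau))\Bigr\}}_{(\mathrm{iii})}.
\]
I then show that each of the three pieces has mean zero under $\mathbb E_x$.

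\emph{Piece (i).} It is integrable and has mean zero by \eqref{ass:regen-residual-integrability} and the definition \eqref{eq:regen-bias-function}.

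\emph{Piece (iii).} The residual is integrable by \eqref{ass:terminal-residual-integrability}. The stopping-time residual-state property \eqref{ass:stopping-time-residual-state} says its $\mathcal F_\tau^a$-conditional mean equals $g_a(X^a(\tau))$. Hence $g_a(X^a(\tau))$ is integrable by conditional Jensen, and (iii) has mean zero. At this point integrability of $U_a(\tau)$ also follows, because $U_a(\tau)$ is a sum of the integrable quantities in \eqref{ass:fixed-D-centered-cost-integrability}, $g_a(x)$, and $g_a(X^a(\tau))$. Integrability of (ii) then follows from the other two pieces, or directly from the bound below.

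\emph{Piece (ii), the main step.} I will use Wald's identity for the i.i.d. mean-zero sequence $\{Z_j^a\}$ with the random index $J-1$. Two facts are needed.

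First, $\{J-1\ge j\}=\{R_j^a<\tau\}$. This event says that fewer than $D$ designated events occurred by $R_j^a$, so it is $\mathcal F_{R_j^a}^a$-measurable. By the conditional restart property, it is independent of $Z_j^a$, which is generated by the cycle starting at $R_j^a$.

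Second, $J-1$ is bounded. By \eqref{ass:regen-event-positive}, every complete cycle contains at least one designated event, so at most $D$ complete cycles fit in $(R_1^a,\tau]$. Hence $J-1\le D$. Then $|(\mathrm{ii})|\le\sum_{j=1}^{D+1}(Y_{j,\mathrm{abs}}^a+|f(a)|L_j^a)$, which is integrable by \eqref{ass:regen-first-moments}. Moreover,
\[
\mathbb E_x[(\mathrm{ii})]=\sum_{j\le D+1}\mathbb E\bigl[Z_j^a\mathbf 1\{R_j^a<\tau\}\bigr]=\sum_{j\le D+1}\mathbb E[Z_j^a]\,\mathbb P(R_j^a<\tau)=0.
\]

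I expect the main obstacle to be the bookkeeping in piece (ii) rather than any analytic estimate. One issue is getting the boundary conventions exactly right: costs and designated events occurring at a regeneration epoch must be allocated consistently, and the case where $\tau$ itself equals a regeneration epoch must be handled. There, the at-or-after convention gives $R^+(\tau)=\tau$, so the residual vanishes, and this must agree with $g_a=0$. The other issue is confirming that $\{R_j^a<\tau\}$ really lies in $\mathcal F_{R_j^a}^a$ under the stated event-counting convention, so that the independence used in Wald's identity is justified.
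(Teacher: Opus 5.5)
Your proposal is correct and is essentially the paper's proof: your $J-1$ is the paper's stopping index $K_D^a$; your Wald-type computation is the paper's bounded optional-sampling step for $\sum_j Z_j^a$; and pieces (i) and (iii) are exactly its uses of $g_a(x)=\mathbb E_x[B_a(x)]$ and of the stopping-time residual-state property with conditional Jensen. The Wald step rests on $\{R_j^a<\tau_D^a\}=\{N_a^\circ(R_j^a)<D\}\in\mathcal F_{R_j^a}^a$ and the bound $J-1\le D$, and the only difference in route is cosmetic: you split $U_a(\tau_D^a)$ pathwise into three mean-zero pieces, whereas the paper first computes $\mathbb E_x[\mathcal C_a(R^+(\tau_D^a))-f(a)R^+(\tau_D^a)]=g_a(x)$ and then subtracts (your direct bound on (ii) also shows that the fixed-$D$ centered-cost integrability hypothesis follows from the other assumptions).
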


The integrability conditions require finite expected absolute magnitudes
for the finite-cycle and residual boundary contributions.  They permit
heavy-tailed models when the stated moments and the separate regeneration
and residual-state assumptions hold.  The learning cycles themselves need
not be regenerative or independent.

\begin{proof}
Let $R^+(\tau_D^a)$ be the first regeneration epoch at or after
$\tau_D^a$.  Define
\[
K_D^a
:=\inf\{k\geq0:N_a^\circ(R_{k+1}^a)\geq D\},
\qquad
\mathcal R_k^a:=\mathcal F_{R_{k+1}^a}^a,
\quad k\geq0.
\]
Then $R^+(\tau_D^a)=R_{K_D^a+1}^a$ and $K_D^a$ counts the complete
cycles after the initial segment ending at $R_1^a$.  The conditional
restart assumption gives
$\mathbb E_x[Z_j^a\mid\mathcal R_{j-1}^a]=0$, because
$\mathcal R_{j-1}^a=\mathcal F_{R_j^a}^a$.  Hence the partial sums
\[
\mathcal M_k^a
:=
\sum_{j=1}^k Z_j^a
\]
therefore form a martingale with respect to $\{\mathcal R_k^a\}$.

For every $k\geq0$,
\[
\{K_D^a\leq k\}
=\{N_a^\circ(R_{k+1}^a)\geq D\}\in\mathcal R_k^a,
\]
so $K_D^a$ is a stopping time for $\{\mathcal R_k^a\}$.  Moreover,
\[
N_a^\circ(R_{D+1}^a)
=N_a^\circ(R_1^a)+\sum_{j=1}^{D}D_j^a
\geq D,
\]
and therefore $K_D^a\leq D$ almost surely.  Bounded optional sampling gives
\begin{equation}
\mathbb E_x\left[
\sum_{j=1}^{K_D^a} Z_j^a
\right]=0.
\label{eq:regen-stopped-sum-zero}
\end{equation}

Starting from \(x=X^a(0)\), the centered cost accumulated up to
\(R^+(\tau_D^a)\) consists of the initial residual centered cost \(B_a(x)\)
plus the stopped sum of complete centered regenerative-cycle rewards.
If $x$ is itself a regeneration state, this initial segment is one full
cycle ending at the strictly positive epoch $R_1^a$, and its mean is
$g_a(x)=0$.  If $\tau_D^a\leq R_1^a$, then $K_D^a=0$ and the sum is
empty.  Thus the same decomposition covers both boundary cases.
Hence, by \eqref{eq:regen-bias-function} and
\eqref{eq:regen-stopped-sum-zero},
\begin{equation}
\mathbb E_x\left[
\mathcal C_a(R^+(\tau_D^a))-f(a)R^+(\tau_D^a)
\right]
=
g_a(x).
\label{eq:regen-total-to-next-regeneration}
\end{equation}

By the stopping-time residual-state property
\eqref{ass:stopping-time-residual-state}, applied at $\tau_D^a$, the expected
centered cost from $\tau_D^a$ to $R^+(\tau_D^a)$ conditional on
$\mathcal F_{\tau_D^a}^a$ is
\begin{equation}
\mathbb E_x^a\left[
\mathcal C_a(R^+(\tau_D^a))-\mathcal C_a(\tau_D^a)
-f(a)\{R^+(\tau_D^a)-\tau_D^a\}
\,\middle|\,
\mathcal F_{\tau_D^a}^a
\right]
=
g_a(X^a(\tau_D^a)).
\label{eq:regen-terminal-residual}
\end{equation}
Subtracting \eqref{eq:regen-terminal-residual} from
\eqref{eq:regen-total-to-next-regeneration} and taking expectations yields
\begin{equation}
\mathbb E_x\left[
\mathcal C_a(\tau_D^a)-f(a)\tau_D^a
\right]
=
g_a(x)-\mathbb E_x[g_a(X^a(\tau_D^a))].
\label{eq:regen-mean-boundary-identity}
\end{equation}
Conditional Jensen's inequality and
\eqref{ass:terminal-residual-integrability} give
$\mathbb E_x|g_a(X^a(\tau_D^a))|<\infty$. Together with
\eqref{ass:fixed-D-centered-cost-integrability}, this ensures that the
following innovation is integrable. Defining
\[
U_a(\tau_D^a)
:=
\mathcal C_a(\tau_D^a)-f(a)\tau_D^a
-g_a(X^a(0))+g_a(X^a(\tau_D^a))
\]
gives \eqref{eq:stopped-regenerative-representation}, and
\eqref{eq:regen-mean-boundary-identity} gives
\eqref{eq:stopped-regenerative-zero-mean}.
\end{proof}

\begin{remark}[Learning cycles need not be regenerative]
\label{rem:learning-cycles-not-regenerative}
The regeneration epochs are used only as an analytical device.  The online
learning cycles remain the fixed-\(D\) cycles defined by the algorithm: the
queue is not reset at their boundaries, and successive observations may be
serially dependent through the inherited state.  Thus the regenerative
assumptions do not require the learning data to be i.i.d.  Their role is to
provide a queueing-based construction of the bias function \(g_a\) and to
verify the boundary representation needed for the transient-bias analysis.
Condition \eqref{ass:regen-event-positive} is a convenient sufficient
condition ensuring that only finitely, indeed at most \(D\), complete
regenerative cycles can be crossed during one fixed-\(D\) learning cycle.
It can be weakened whenever the corresponding stopped sum of centered
regenerative-cycle rewards is integrable and has mean zero.
\end{remark}

\begin{remark}[GI/G/1 and other regenerative queues]
\label{rem:gig1-regeneration}
The regenerative formulation does not require exponential interarrival or
service times.  For example, consider a stable GI/G/1 queue under a fixed
action \(a\).  Let \(A_1^a,A_2^a,\ldots\) be i.i.d. interarrival times and
\(B_1^a,B_2^a,\ldots\) be i.i.d. service requirements, with the two
sequences independent, finite positive means, and
\begin{equation}
\rho_a
:=
\frac{\mathbb E[B_1^a]}{\mathbb E[A_1^a]}
<1.
\label{eq:gig1-stability}
\end{equation}
An arrival that finds the system empty is a standard candidate for a
regeneration construction.  Its compatibility with the observed state must
be specified.  One convention places the restart after recording the
empty-system arrival but before revealing the fresh service requirement
and next-interarrival mark.  Those marks then belong to the new cycle.
The pre-/post-arrival state and any arrival-time costs or designated events
must be assigned consistently with that convention and with the information
actually available to the learner.  If the state at the proposed restart
already records a fresh random mark, the conditional restart law and the
zero-bias normalization cannot be inferred merely from i.i.d. cycles; a
compatible construction or a separate bias argument is required.

Proposition~\ref{prop:regenerative-fixed-D} applies only after its
filtration, stopping-time residual identity, designated-event, and
integrability assumptions have been checked for the chosen construction.
For queue-length holding costs, the moment verification concerns the
busy-cycle duration, the area under the queue-length process, and the
initial and terminal residual contributions.  Stronger moments of the
primitive interarrival and service times can provide sufficient conditions
in a particular model, but stability alone does not supply this complete
verification.

The same approach is available for M/M/1, M/G/1, GI/M/1, finite-capacity,
and network queues when a compatible regeneration construction and the
other stated assumptions are available.  These assumptions concern the
analysis; the direct online SAA estimator itself requires neither observed
regeneration epochs nor knowledge of the parametric forms of the arrival
or service distributions.
\end{remark}

\begin{remark}[Connection with Poisson equations]
\label{rem:regenerative-poisson-connection}
When the complete state process under a fixed action is Markov, the
regenerative bias function in \eqref{eq:regen-bias-function} is, up to an
additive constant, the usual relative-value or Poisson-equation solution
under standard regularity conditions.  Thus the continuous-time Markov-chain
Poisson formulation is a useful special case of the regenerative argument,
rather than an assumption required by the learning method.
\end{remark}

\begin{remark}[Connection with the learning cycles]
\label{rem:regenerative-learning-cycle}
Conditional on \(\mathcal H_{t-1}\), both the selected action \(a_t\) and
the inherited initial state \(X_{t-1}\) are known.  During cycle \(t\), the
action is held fixed at \(a_t\), and the cycle ends after \(D\) designated
events.  Hence Proposition~\ref{prop:regenerative-fixed-D}, applied
conditionally with \(a=a_t\), yields the following identity when its
regeneration and residual-state assumptions hold for every admissible
action and inherited state, and the frozen-action cycle law agrees with
the conditional law given the actual learning history:
\begin{equation}
G_t-f(a_t)T_t
=
g_{a_t}(X_{t-1})-g_{a_t}(X_t)+U_t(a_t),
\label{eq:learning-cycle-regenerative-decomposition}
\end{equation}
where
\begin{equation}
\mathbb E\left[
U_t(a_t)\mid\mathcal H_{t-1}
\right]
=0.
\label{eq:learning-cycle-regenerative-zero-mean}
\end{equation}
Thus the cycle-level representation follows from the specified compatible
regenerative construction and moment conditions, without requiring a CTMC
model for the primitive queueing dynamics.  A valid Poisson-equation
construction of the same representation is an alternative when this
particular regeneration convention is unavailable.
\end{remark}

\subsubsection{Control of transient bias through action blocks}
\label{subsec:regenerative-transient}

We next develop sufficient conditions for the transient-bias requirement
(A2), using the cycle-boundary state process \(\{X_t\}_{t\geq0}\)
introduced in Section~\ref{sec:model-saa}.

Fix \(a\in\mathcal A\).  Suppose that there exists a measurable function
\(g_a\) such that, whenever \(a_t=a\),
\begin{equation}
G_t-f(a)T_t
=
g_a(X_{t-1})-g_a(X_t)+U_t(a),
\label{eq:cycle-bias-decomposition}
\end{equation}
where
\begin{equation}
\mathbb E
\left[
U_t(a)\mid\mathcal H_{t-1}
\right]
=
0.
\label{eq:bias-martingale}
\end{equation}
By Proposition~\ref{prop:regenerative-fixed-D} and
Remark~\ref{rem:regenerative-learning-cycle}, this representation follows
from the fixed-action regenerative construction under the conditions stated
there.

Define
\begin{equation}
\Delta_t(a)
:=
g_a(X_{t-1})-g_a(X_t).
\label{eq:boundary-increment}
\end{equation}
Then, on the event \(\{a_t=a\}\),
\[
\beta_t
=
\mathbb E
\left[
\Delta_t(a)\mid\mathcal H_{t-1}
\right].
\]
Define further
\begin{equation}
R_t(a)
:=
I_t(a)
\left[
\Delta_t(a)
-
\mathbb E\!\left[
\Delta_t(a)\mid\mathcal H_{t-1}
\right]
\right].
\label{eq:boundary-martingale}
\end{equation}
Since \(I_t(a)\) is \(\mathcal H_{t-1}\)-measurable,
\begin{equation}
\mathbb E
\left[
R_t(a)\mid\mathcal H_{t-1}
\right]
=
0.
\label{eq:boundary-martingale-difference}
\end{equation}

Among cycles \(1,\ldots,n\), an \emph{action block} for action \(a\)
is a maximal contiguous sequence of cycles during which \(a\) is
operated. Let \(J_n(a)\) denote the number of such blocks, and write them as
\[
[\ell_{a,1},r_{a,1}],\ldots,
[\ell_{a,J_n(a)},r_{a,J_n(a)}].
\]
Then
\begin{equation}
\sum_{t=1}^n I_t(a)\Delta_t(a)
=
\sum_{j=1}^{J_n(a)}
\left\{
g_a(X_{\ell_{a,j}-1})
-
g_a(X_{r_{a,j}})
\right\}.
\label{eq:block-telescoping}
\end{equation}

\begin{theorem}[Vanishing transient bias under negligible action-block boundaries]
\label{thm:transient-bias-blocks}
Fix \(a\in\mathcal A\), and suppose that
\eqref{eq:cycle-bias-decomposition}--\eqref{eq:bias-martingale}
hold.  Assume that, almost surely,

\medskip
\noindent\textnormal{(B1)}
action \(a\) is selected infinitely often and the average clock duration
of its selected cycles is bounded away from zero asymptotically:
\begin{equation}
N_n(a)\longrightarrow\infty,
\qquad
\liminf_{n\to\infty}
\frac{S_n(a)}{N_n(a)}
>0.
\label{ass:block-positive-operating-time}
\end{equation}

\medskip
\noindent\textnormal{(B2)}
the total magnitude of the regenerative bias function at the action-block
boundaries is negligible relative to the cumulative operating time:
\begin{equation}
\frac{1}{S_n(a)}
\sum_{j=1}^{J_n(a)}
\left(
|g_a(X_{\ell_{a,j}-1})|
+
|g_a(X_{r_{a,j}})|
\right)
\longrightarrow0;
\label{ass:normalized-block-boundary}
\end{equation}

\medskip
\noindent\textnormal{(B3)}
the centered boundary fluctuations satisfy the martingale-variance
summability condition
\begin{equation}
\sum_{t=1}^{\infty}
\frac{
I_t(a)\,
\mathbb E\!\left[R_t(a)^2\mid\mathcal H_{t-1}\right]
}{
\{N_{t-1}(a)+1\}^2
}
<\infty.
\label{ass:boundary-variance-summability}
\end{equation}

Then the accumulated conditional transient term satisfies
\begin{equation}
\frac{
\sum_{t=1}^n I_t(a)\beta_t
}{
S_n(a)
}
\longrightarrow0
\qquad\text{almost surely}.
\label{eq:transient-bias-conclusion}
\end{equation}
Hence condition \textnormal{(A2)} of
Theorem~\ref{thm:direct-saa-consistency} holds.
\end{theorem}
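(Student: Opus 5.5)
On $\{a_t=a\}$ we have $\beta_t=\mathbb E[\Delta_t(a)\mid\mathcal H_{t-1}]$, so the plan is to write the conditional transient sum as the realized boundary-increment sum minus a centered martingale:
\[
\sum_{t=1}^n I_t(a)\beta_t
=
\sum_{t=1}^n I_t(a)\Delta_t(a)
-
\sum_{t=1}^n R_t(a).
\]
This identity comes directly from the definition \eqref{eq:boundary-martingale} of $R_t(a)$. The proof then reduces to showing that each of the two sums on the right, divided by $S_n(a)$, tends to zero almost surely.

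For the first sum, I would use the telescoping identity \eqref{eq:block-telescoping}. Within each maximal action block the increments $g_a(X_{t-1})-g_a(X_t)$ cancel, leaving only the entry and exit boundary values. The triangle inequality then bounds the absolute value of the sum by
\[
\sum_{j=1}^{J_n(a)}\bigl(|g_a(X_{\ell_{a,j}-1})|+|g_a(X_{r_{a,j}})|\bigr).
\]
Divided by $S_n(a)$, this tends to zero almost surely by (B2). One bookkeeping point: at time $n$ the last block may be incomplete, ending at $r=n$. It is still a maximal contiguous run among cycles $1,\ldots,n$, so the identity and (B2) apply to it unchanged.

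For the martingale sum I would not use Lemma~\ref{lem:martingale-averaging} directly, since (M3) asks for a uniform conditional second-moment bound and (B3) only supplies a weighted summability condition. Instead I would redo its Kronecker argument. First, $R_t(a)$ vanishes off $\{a_t=a\}$, so only selected cycles contribute. Set $W_t:=R_t(a)/(N_{t-1}(a)+1)$. Since $N_{t-1}(a)$ is $\mathcal H_{t-1}$-measurable, \eqref{eq:boundary-martingale-difference} shows that $W_t$ is a martingale difference. Its conditional variance is $I_t(a)\mathbb E[R_t(a)^2\mid\mathcal H_{t-1}]/\{N_{t-1}(a)+1\}^2$, and by (B3) these variances are summable almost surely. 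The standard theorem that a martingale with almost surely finite predictable quadratic variation converges (the local-martingale/truncation form of the $L^2$ convergence theorem) then gives almost sure convergence of $\sum_t W_t$. On a selected cycle $N_{t-1}(a)+1=N_t(a)$, so reindexing along the selection times $\tau_k(a)$ yields convergence of $\sum_k R_{\tau_k(a)}(a)/k$. Kronecker's lemma, together with $N_n(a)\to\infty$ from (B1), then gives
\[
N_n(a)^{-1}\sum_{t\le n}R_t(a)\to0 .
\]
Finally, writing
\[
\frac{\sum_{t\le n}R_t(a)}{S_n(a)}
=
\frac{\sum_{t\le n}R_t(a)/N_n(a)}{S_n(a)/N_n(a)},
\]
the denominator $S_n(a)/N_n(a)$ is eventually bounded away from zero by (B1), so the ratio tends to zero.

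I expect the main obstacle to be the rigorous martingale-convergence step under the random, merely almost surely summable variance condition (B3), rather than a deterministic $L^2$ bound. I would handle it by localizing with the stopping times $\inf\{n:\sum_{t\le n+1}\mathrm{Var}_{t-1}(W_t)>c\}$, which are predictable because each conditional variance is $\mathcal H_{t-1}$-measurable. The stopped martingale is then $L^2$-bounded and converges almost surely. Letting $c\to\infty$ through the integers covers the event where the total variance is finite, and by (B3) that event has probability one. A secondary point is integrability of $\Delta_t(a)$, which is needed for $\beta_t$ and $R_t(a)$ to be defined. This is implicit in the hypotheses \eqref{eq:cycle-bias-decomposition}--\eqref{eq:bias-martingale}, since $\mathbb E[\Delta_t\mid\mathcal H_{t-1}]$ is used there, and should be noted explicitly. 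Combining the two limits gives \eqref{eq:transient-bias-conclusion}, which is (A2).
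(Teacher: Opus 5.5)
Your proposal is correct and follows the paper's proof essentially step for step: the identity $I_t(a)\beta_t=I_t(a)\Delta_t(a)-R_t(a)$, block telescoping combined with (B2), the weighted martingale difference $R_t(a)/\{N_{t-1}(a)+1\}$ whose series converges almost surely under (B3), reindexing along the selection times, Kronecker's lemma, and division by $S_n(a)/N_n(a)$ using (B1). Your localization by stopping times only makes explicit the martingale-convergence step under almost-surely summable conditional variances, which the paper states in a single line.
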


\begin{proof}
By \eqref{eq:boundary-martingale-difference}, \(\{R_t(a)\}\) is a
martingale-difference sequence and \(R_t(a)=0\) whenever \(a_t\neq a\).
Since \(N_{t-1}(a)+1\) is \(\mathcal H_{t-1}\)-measurable, define the
auxiliary weighted sequence
\begin{equation}
V_t(a)
:=
\frac{R_t(a)}{N_{t-1}(a)+1}.
\label{eq:weighted-boundary-martingale}
\end{equation}
Then \(\{V_t(a)\}\) is again a martingale-difference sequence.
Condition \textnormal{(B3)}
implies
\[
\sum_{t=1}^{\infty}
\mathbb E\!\left[V_t(a)^2\mid\mathcal H_{t-1}\right]
<\infty
\qquad\text{almost surely},
\]
so the martingale series \(\sum_{t\geq1}V_t(a)\) converges almost surely.
At the \(k\)-th selection of action \(a\),
\(N_{t-1}(a)+1=k\).  Hence the convergent series is precisely
\[
\sum_{k=1}^{\infty}
\frac{R_{\tau_k(a)}(a)}{k}.
\]
Kronecker's lemma therefore yields
\begin{equation}
\frac{1}{N_n(a)}
\sum_{t=1}^n R_t(a)
\longrightarrow0
\qquad\text{almost surely}.
\label{eq:boundary-martingale-selected-average}
\end{equation}
Combining this with \textnormal{(B1)} gives
\begin{equation}
\frac{
\sum_{t=1}^n R_t(a)
}{
S_n(a)
}
\longrightarrow0
\qquad\text{almost surely}.
\label{eq:boundary-martingale-ratio}
\end{equation}

From the definition of \(R_t(a)\),
\[
I_t(a)\beta_t
=
I_t(a)\Delta_t(a)-R_t(a).
\]
Therefore, using the telescoping identity \eqref{eq:block-telescoping},
\begin{align*}
\frac{
\left|\sum_{t=1}^n I_t(a)\beta_t\right|
}{S_n(a)}
&\leq
\frac{1}{S_n(a)}
\sum_{j=1}^{J_n(a)}
\left(
|g_a(X_{\ell_{a,j}-1})|
+
|g_a(X_{r_{a,j}})|
\right)
\\
&\quad+
\frac{
\left|\sum_{t=1}^nR_t(a)\right|
}{S_n(a)}.
\end{align*}
The first term converges to zero by \textnormal{(B2)}, while the second
converges to zero by \eqref{eq:boundary-martingale-ratio}.  This proves
\eqref{eq:transient-bias-conclusion}.
\end{proof}

Theorem~\ref{thm:transient-bias-blocks} reduces the transient-bias
problem to conditions \textnormal{(B2)} and \textnormal{(B3)}.
We next give readily verifiable sufficient conditions for these two
requirements.  The first route uses moment bounds only along cycles in
which action \(a\) is actually selected, together with sufficiently
sparse creation of new action blocks.  A subsequent result gives a
queueing-oriented verification through a common Lyapunov condition.

\begin{corollary}[Selected-cycle moments and sparse block creation]
\label{cor:block-moment-growth}
Fix \(a\in\mathcal A\), and assume \textnormal{(B1)}.  Recall that
\(\tau_k(a)\) is the cycle index of the \(k\)-th selection of action \(a\).
Suppose that, for some \(q>2\),
\begin{equation}
\sup_{k\geq1}
\mathbb E\!\left[
\left|g_a(X_{\tau_k(a)-1})\right|^q
+
\left|g_a(X_{\tau_k(a)})\right|^q
\right]
<\infty.
\label{ass:selected-endpoint-q-moment}
\end{equation}
Suppose also that, for some
\(\varepsilon\in(0,1-1/q)\),
\begin{equation}
\frac{
J_n(a)\,N_n(a)^{1/q+\varepsilon}
}{S_n(a)}
\longrightarrow0
\qquad\text{almost surely}.
\label{ass:selected-block-growth}
\end{equation}
Then conditions \textnormal{(B2)}--\textnormal{(B3)} of
Theorem~\ref{thm:transient-bias-blocks} hold, and hence
\eqref{eq:transient-bias-conclusion} follows.
\end{corollary}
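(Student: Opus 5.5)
The plan is to verify \textnormal{(B2)} and \textnormal{(B3)} separately, working throughout along the selection times $\tau_k(a)$. These are almost surely finite for every $k$ by \textnormal{(B1)}. Put
\[
W_k:=|g_a(X_{\tau_k(a)-1})|+|g_a(X_{\tau_k(a)})|,
\qquad
\bar W:=\sup_{k\geq1}\mathbb E[W_k^q]<\infty,
\]
where $\bar W<\infty$ follows from \eqref{ass:selected-endpoint-q-moment} together with $(x+y)^q\le 2^{q-1}(x^q+y^q)$. The key observation is that every block boundary is a selection endpoint. The left boundary $X_{\ell_{a,j}-1}$ equals $X_{\tau_k(a)-1}$ for the $k$ with $\tau_k(a)=\ell_{a,j}$. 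The right boundary $X_{r_{a,j}}$ equals $X_{\tau_{k'}(a)}$ for some $k'\le N_n(a)$. Hence
\[
\sum_{j=1}^{J_n(a)}\bigl(|g_a(X_{\ell_{a,j}-1})|+|g_a(X_{r_{a,j}})|\bigr)
\le 2J_n(a)\max_{k\le N_n(a)}W_k .
\]

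\textbf{Step 1 (B2).} I will bound the running maximum by a Borel--Cantelli argument. Markov's inequality gives
\[
\mathbb P\bigl(W_k>k^{1/q+\varepsilon}\bigr)\le \bar W\,k^{-1-q\varepsilon},
\]
which is summable in $k$. So almost surely there is a finite random $k_0$ such that $W_k\le k^{1/q+\varepsilon}$ for all $k\ge k_0$. Setting $C:=\max_{k<k_0}W_k$, this yields
\[
\max_{k\le m}W_k\le C+m^{1/q+\varepsilon}\le (C+1)\,m^{1/q+\varepsilon}
\qquad\text{for all } m\ge1 .
\]
Taking $m=N_n(a)$ and dividing by $S_n(a)$, the normalized boundary sum in \textnormal{(B2)} is at most
\[
2(C+1)\,\frac{J_n(a)N_n(a)^{1/q+\varepsilon}}{S_n(a)},
\]
and this tends to zero by \eqref{ass:selected-block-growth}.

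\textbf{Step 2 (B3).} It suffices to show that the nonnegative series in \eqref{ass:boundary-variance-summability} has finite expectation, since it is then finite almost surely. The terms vanish unless $a_t=a$, and $I_t(a)$ and $N_{t-1}(a)$ are $\mathcal H_{t-1}$-measurable. The tower property and monotone convergence therefore give
\[
\mathbb E\sum_{t\ge1}\frac{I_t(a)\,\mathbb E[R_t(a)^2\mid\mathcal H_{t-1}]}{\{N_{t-1}(a)+1\}^2}
=\mathbb E\sum_{t\ge1}\frac{I_t(a)R_t(a)^2}{\{N_{t-1}(a)+1\}^2}
=\sum_{k\ge1}\frac{\mathbb E[R_{\tau_k(a)}(a)^2]}{k^2}.
\]
A conditional variance is at most the conditional second moment, so $\mathbb E[R_{\tau_k(a)}(a)^2]\le \mathbb E[\Delta_{\tau_k(a)}(a)^2]\le\mathbb E[W_k^2]$. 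Because $q>2$, Lyapunov's inequality gives $\mathbb E[W_k^2]\le\bar W^{2/q}$. The series is therefore dominated by $\bar W^{2/q}\sum_k k^{-2}<\infty$.

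The conclusion then follows from Theorem~\ref{thm:transient-bias-blocks}, since \textnormal{(B1)} is assumed and \textnormal{(B2)}--\textnormal{(B3)} have been verified. The main obstacle I expect is Step 1. There I must check that the block endpoints are indexed by selections $k\le N_n(a)$, so that the uniform moment bound applies. I also have to ensure that the random constant $C$ is absorbed using $N_n(a)\ge1$, rather than requiring a separate hypothesis on $J_n(a)/S_n(a)$.
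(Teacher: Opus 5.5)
Your proof is correct. Your (B2) step is the paper's argument. Both proofs bound the boundary sum by $2J_n(a)\max_{k\le N_n(a)}W_k$ and control the running maximum by Markov's inequality and Borel--Cantelli. The paper records $o(k^{1/q+\varepsilon})$, while your $O$-bound, with the random constant absorbed through $N_n(a)\ge1$, works equally well under \eqref{ass:selected-block-growth}.

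Your (B3) step takes a genuinely different route.

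\emph{The paper's route} is pathwise. It writes $A_k(a)=\operatorname{Var}(g_a(X_{\tau_k(a)})\mid\mathcal H_{\tau_k(a)-1})$ and uses conditional Jensen to get $\sup_k\mathbb E[A_k(a)^{q/2}]<\infty$. A second Borel--Cantelli argument then gives $A_k(a)=O(k^{2/q+\delta})$. This is summable against $k^{-2}$ only because $2/q+\delta<1$, which forces $q>2$.

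\emph{Your route} shows instead that the nonnegative series in (B3) has finite expectation. Because the weights $I_t(a)/\{N_{t-1}(a)+1\}^2$ are $\mathcal H_{t-1}$-measurable, the series reduces to $\sum_k\mathbb E[R_{\tau_k(a)}(a)^2]/k^2\le\sup_k\mathbb E[W_k^2]\sum_k k^{-2}$. This is shorter and needs only uniformly bounded second moments at the selected endpoints. Your argument therefore proves the corollary already for $q=2$, which the paper's exponent condition excludes.

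What the paper's pathwise bound buys is a template it reuses in Corollary~\ref{cor:common-lyapunov-regenerative}. There, moments are controlled only at deterministic times and the selection frequency only eventually almost surely. In that setting your expectation-at-selection-times trick does not apply directly, but in the present selected-index setting that machinery is unnecessary.

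The one step you compress, $\mathbb E[R_{\tau_k(a)}(a)^2]\le\mathbb E[\Delta_{\tau_k(a)}(a)^2]$, does go through. It follows by decomposing over $\{\tau_k(a)=t\}\in\mathcal H_{t-1}$ and using $\mathbb E[R_t(a)^2\mid\mathcal H_{t-1}]\le I_t(a)\,\mathbb E[\Delta_t(a)^2\mid\mathcal H_{t-1}]$, exactly as in the predictable-index argument of Lemma~\ref{lem:martingale-averaging}.
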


\begin{proof}
Set
\[
Y_k(a)
:=
\left|g_a(X_{\tau_k(a)-1})\right|
+
\left|g_a(X_{\tau_k(a)})\right|.
\]
Condition \eqref{ass:selected-endpoint-q-moment} implies
\(\sup_k\mathbb E[Y_k(a)^q]<\infty\).  By Markov's inequality and the
Borel--Cantelli lemma,
\begin{equation}
Y_k(a)
=
o\!\left(k^{1/q+\varepsilon}\right)
\qquad\text{almost surely}.
\label{eq:selected-endpoint-growth}
\end{equation}
Consequently,
\[
\max_{1\leq i\leq m}Y_i(a)
=
o\!\left(m^{1/q+\varepsilon}\right)
\qquad\text{almost surely}.
\]
For \(m=N_n(a)\), every action-block boundary among cycles
\(1,\ldots,n\) is either the state immediately before some selected cycle
or the state immediately after some selected cycle.  Therefore
\begin{align*}
&\sum_{j=1}^{J_n(a)}
\left(
|g_a(X_{\ell_{a,j}-1})|
+
|g_a(X_{r_{a,j}})|
\right)
\\
&\qquad\leq
2J_n(a)
\max_{1\leq i\leq N_n(a)}Y_i(a)
=
o\!\left(
J_n(a)N_n(a)^{1/q+\varepsilon}
\right)
\qquad\text{almost surely}.
\end{align*}
Together with \eqref{ass:selected-block-growth}, this proves
\textnormal{(B2)}.

It remains to verify \textnormal{(B3)}.  On the \(k\)-th selected cycle,
write
\begin{equation}
A_k(a)
:=
\mathbb E\!\left[
R_{\tau_k(a)}(a)^2
\,\middle|\,
\mathcal H_{\tau_k(a)-1}
\right].
\label{eq:selected-boundary-variance}
\end{equation}
Since \(X_{\tau_k(a)-1}\) is known at the beginning of the selected cycle,
\begin{equation}
A_k(a)
=
\operatorname{Var}\!\left(
 g_a(X_{\tau_k(a)})
 \mid
 \mathcal H_{\tau_k(a)-1}
\right).
\label{eq:selected-terminal-variance}
\end{equation}
Hence, using
\(\operatorname{Var}(Z\mid\mathcal F)\leq
\mathbb E[Z^2\mid\mathcal F]\) and conditional Jensen's inequality,
\[
A_k(a)^{q/2}
\leq
\mathbb E\!\left[
|g_a(X_{\tau_k(a)})|^q
\,\middle|\,
\mathcal H_{\tau_k(a)-1}
\right].
\]
Taking expectations and using
\eqref{ass:selected-endpoint-q-moment} gives
\[
\sup_{k\geq1}\mathbb E[A_k(a)^{q/2}]<\infty.
\]
Choose \(\delta\in(0,1-2/q)\).  Another application of Markov's
inequality and Borel--Cantelli yields
\[
A_k(a)
=
O\!\left(k^{2/q+\delta}\right)
\qquad\text{almost surely}.
\]
Since \(2/q+\delta<1\),
\[
\sum_{k=1}^{\infty}\frac{A_k(a)}{k^2}<\infty
\qquad\text{almost surely}.
\]
At the \(k\)-th selected cycle,
\(N_{t-1}(a)+1=k\), and therefore this last display is exactly
\textnormal{(B3)}.
\end{proof}

\begin{remark}[Interpretation of the selected-cycle conditions]
\label{rem:block-moment-interpretation}
Condition \eqref{ass:selected-endpoint-q-moment} is imposed only on the
queue states immediately before and after cycles in which action \(a\) is
actually selected; it does not require independence of these states or of
successive learning observations.  The restriction \(q>2\) is useful
because the same moment assumption controls both parts of the proof: the
block-boundary magnitudes in \textnormal{(B2)} and the boundary-martingale
variance in \textnormal{(B3)}.  Thus no separate variance-growth assumption
is needed in Corollary~\ref{cor:block-moment-growth}.

Condition \eqref{ass:selected-block-growth} requires new action blocks to
be created slowly enough that their accumulated boundary contribution is
negligible relative to operating time.  By \textnormal{(B1)}, a simpler
sufficient version is
\begin{equation}
\frac{J_n(a)}{
N_n(a)^{1-1/q-\varepsilon}}
\longrightarrow0.
\label{ass:block-count-vs-selections}
\end{equation}
These are sufficient rather than necessary conditions; the direct boundary
condition \textnormal{(B2)} can hold under substantially weaker action
schedules.
\end{remark}

\medskip
We next give a queueing-oriented route for verifying the preceding moment
requirements along the actual adaptively controlled trajectory.  The
argument uses a common Lyapunov function at fixed-\(D\) learning-cycle
boundaries together with moments of the regenerative residual cost.  It
does not require the primitive queueing process to be a CTMC.

Let \(\mathsf X\) denote the set of possible complete states at learning-cycle
boundaries.  For a nonnegative function \(\mathcal V\) on \(\mathsf X\), define
\begin{equation}
P_a^{(D)}\mathcal V(x)
:=
\mathbb E_x^a\!\left[
\mathcal V\!\left(X^a(\tau_D^a)\right)
\right],
\label{eq:fixed-D-lyapunov-operator}
\end{equation}
where the expectation is taken when one fixed-\(D\) cycle is run from state
\(x\) with action \(a\) held fixed throughout the cycle.

\begin{corollary}[Common Lyapunov and regenerative-moment verification]
\label{cor:common-lyapunov-regenerative}
Suppose that there exist a function
\(\mathcal V:\mathsf X\to[1,\infty)\), constants \(\rho\in(0,1)\),
\(b<\infty\), and \(C_B<\infty\), and some \(q>2\), such that, uniformly
for \(a\in\mathcal A\),
\begin{equation}
P_a^{(D)}\mathcal V(x)
\leq
\rho\mathcal V(x)+b,
\qquad x\in\mathsf X,
\label{ass:common-fixed-D-drift}
\end{equation}
and
\begin{equation}
\mathbb E_x^a\!\left[|B_a(x)|^q\right]
\leq
C_B\mathcal V(x),
\qquad x\in\mathsf X,
\label{ass:regenerative-residual-q-moment}
\end{equation}
where \(B_a(x)\) is the centered residual regenerative cost in
\eqref{eq:regen-residual-centered-cost}.  Assume also that
\begin{equation}
\mathbb E[\mathcal V(X_0)]<\infty.
\label{ass:initial-lyapunov-moment}
\end{equation}

Fix \(a\in\mathcal A\) and assume \textnormal{(B1)}.  Suppose that, for
some \(\varepsilon>0\),
\begin{equation}
\frac{J_n(a)n^{1/q+\varepsilon}}{S_n(a)}
\longrightarrow0
\qquad\text{almost surely},
\label{ass:global-time-block-growth}
\end{equation}
and that there exist constants \(c_a>0\) and
\(\kappa_a>2/q\) such that
\begin{equation}
N_n(a)
\geq
c_a n^{\kappa_a}
\qquad\text{for all sufficiently large }n,
\quad\text{almost surely}.
\label{ass:polynomial-selection-frequency}
\end{equation}
Then conditions \textnormal{(B2)}--\textnormal{(B3)} of
Theorem~\ref{thm:transient-bias-blocks} hold, and therefore
\eqref{eq:transient-bias-conclusion} follows.
\end{corollary}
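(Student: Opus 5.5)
The plan is to convert the common drift \eqref{ass:common-fixed-D-drift} into a moment bound that holds uniformly over \emph{all} cycle indices of the adaptively controlled trajectory. I would then obtain \textnormal{(B2)} and \textnormal{(B3)} by Borel--Cantelli arguments indexed by global time $t$, rather than by the selection index $k$ as in Corollary~\ref{cor:block-moment-growth}. The reason for switching indices is that \eqref{ass:global-time-block-growth} and \eqref{ass:polynomial-selection-frequency} are both stated in terms of $n$.

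\textbf{Step 1: uniform Lyapunov moments.} As in Remark~\ref{rem:regenerative-learning-cycle}, the conditional law of cycle $t$ given $\mathcal H_{t-1}$ is the frozen-action law of one fixed-$D$ cycle started from $X_{t-1}$ under $a_t$. Since $a_t$ is $\mathcal H_{t-1}$-measurable and the drift holds uniformly in $a$,
\[
\mathbb E[\mathcal V(X_t)\mid\mathcal H_{t-1}]=P_{a_t}^{(D)}\mathcal V(X_{t-1})\le\rho\mathcal V(X_{t-1})+b .
\]
Iterating and using \eqref{ass:initial-lyapunov-moment} gives $\sup_{t\ge0}\mathbb E[\mathcal V(X_t)]\le\mathbb E[\mathcal V(X_0)]+b/(1-\rho)=:\bar V<\infty$. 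Next, Jensen applied to $g_a(x)=\mathbb E_x[B_a(x)]$ together with \eqref{ass:regenerative-residual-q-moment} yields $|g_a(x)|^q\le C_B\mathcal V(x)$. Hence $\sup_t\mathbb E|g_a(X_t)|^q\le C_B\bar V$. I expect the main care to be needed here, in justifying that the frozen-action operator applies along the adaptive path. This is exactly the compatibility hypothesis already built into Remark~\ref{rem:regenerative-learning-cycle}, so I would invoke it explicitly rather than reprove it.

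\textbf{Step 2: (B2).} By Markov's inequality,
\[
\sum_t\mathbb P\bigl(|g_a(X_t)|>t^{1/q+\varepsilon}\bigr)\le C_B\bar V\sum_t t^{-1-q\varepsilon}<\infty .
\]
Borel--Cantelli then gives $|g_a(X_t)|=o(t^{1/q+\varepsilon})$ almost surely, and so $\max_{0\le t\le n}|g_a(X_t)|=o(n^{1/q+\varepsilon})$. Every block boundary is some $X_t$ with $t\le n$, so the boundary sum in \eqref{ass:normalized-block-boundary} is at most $2J_n(a)\max_{t\le n}|g_a(X_t)|=o(J_n(a)n^{1/q+\varepsilon})$. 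Dividing by $S_n(a)$ and applying \eqref{ass:global-time-block-growth} gives \textnormal{(B2)}.

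\textbf{Step 3: (B3).} On $\{a_t=a\}$ the state $X_{t-1}$ is known, so $\mathbb E[R_t(a)^2\mid\mathcal H_{t-1}]=\operatorname{Var}(g_a(X_t)\mid\mathcal H_{t-1})=:A_t$. Using the conditional Jensen inequality and the bound from Step 1,
\[
A_t\le\bigl(\mathbb E[|g_a(X_t)|^q\mid\mathcal H_{t-1}]\bigr)^{2/q}\le\bigl(C_B(\rho\mathcal V(X_{t-1})+b)\bigr)^{2/q}.
\]
Consequently $\sup_t\mathbb E[A_t^{q/2}]<\infty$. Fix $\delta>0$ with $(2/q+\delta)/\kappa_a<1$; this is possible because $\kappa_a>2/q$. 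Markov and Borel--Cantelli in $t$ then give $A_t=O(t^{2/q+\delta})$ almost surely. At the $k$th selection $t=\tau_k(a)$ we have $N_{t-1}(a)+1=k$. By \eqref{ass:polynomial-selection-frequency}, $k=N_{\tau_k}(a)\ge c_a\tau_k^{\kappa_a}$ for large $k$, so $\tau_k\le(k/c_a)^{1/\kappa_a}$. Therefore the $k$th term of \eqref{ass:boundary-variance-summability} is
\[
O\bigl(k^{(2/q+\delta)/\kappa_a-2}\bigr),
\]
whose exponent is strictly less than $-1$. The series is therefore finite almost surely, which is \textnormal{(B3)}.

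With (B1) assumed, Theorem~\ref{thm:transient-bias-blocks} then delivers \eqref{eq:transient-bias-conclusion}.
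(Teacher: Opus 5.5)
Your proposal is correct and follows the paper's proof essentially step for step. Both arguments derive uniform Lyapunov moments along the adaptive path from the common drift, use Jensen to get $|g_a|^q\le C_B\mathcal V$, bound $\max_{s\le n}|g_a(X_s)|$ by a global-time Borel--Cantelli argument for (B2), and for (B3) convert a global-time growth bound on the conditional boundary variance to the selection index through $\tau_k(a)\le (k/c_a)^{1/\kappa_a}$. The only cosmetic difference is in (B3): you apply Markov/Borel--Cantelli directly to $A_t$ through its uniformly bounded $q/2$-th moment, whereas the paper applies it to $\mathcal V(X_t)=o(t^{1+\delta})$ and then inserts this into $C\{1+\mathcal V(X_{t-1})\}^{2/q}$.
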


\begin{proof}
Because \(a_t\) is \(\mathcal H_{t-1}\)-measurable, is held fixed during
cycle \(t\), and \(X_{t-1}\) is a complete state descriptor,
\[
\mathbb E\!\left[
\mathcal V(X_t)\mid\mathcal H_{t-1}
\right]
=
P_{a_t}^{(D)}\mathcal V(X_{t-1}).
\]
Hence the common drift condition
\eqref{ass:common-fixed-D-drift} gives
\begin{equation}
\mathbb E\!\left[
\mathcal V(X_t)
\mid
\mathcal H_{t-1}
\right]
\leq
\rho\mathcal V(X_{t-1})+b.
\label{eq:adaptive-lyapunov-drift}
\end{equation}
Iterating and using \eqref{ass:initial-lyapunov-moment},
\begin{equation}
\sup_{t\geq0}\mathbb E[\mathcal V(X_t)]<\infty.
\label{eq:uniform-adaptive-lyapunov-moment}
\end{equation}
Thus the Lyapunov control holds along the actual persistent trajectory,
even though the selected action may change from cycle to cycle.

By the regenerative definition \(g_a(x)=\mathbb E_x^a[B_a(x)]\) and
Jensen's inequality,
\begin{equation}
|g_a(x)|^q
\leq
\mathbb E_x^a[|B_a(x)|^q]
\leq
C_B\mathcal V(x).
\label{eq:bias-growth-from-regeneration}
\end{equation}
Combining \eqref{eq:uniform-adaptive-lyapunov-moment} and
\eqref{eq:bias-growth-from-regeneration} yields
\begin{equation}
\sup_{t\geq0}\mathbb E[|g_a(X_t)|^q]<\infty.
\label{eq:uniform-deterministic-bias-moment}
\end{equation}
For every \(\eta>0\), Markov's inequality and Borel--Cantelli therefore
imply
\begin{equation}
\max_{0\leq s\leq n}|g_a(X_s)|
=
o\!\left(n^{1/q+\eta}\right)
\qquad\text{almost surely}.
\label{eq:global-bias-path-growth}
\end{equation}
Taking \(\eta=\varepsilon\) and using
\eqref{ass:global-time-block-growth},
\begin{align*}
&\frac{1}{S_n(a)}
\sum_{j=1}^{J_n(a)}
\left(
|g_a(X_{\ell_{a,j}-1})|
+
|g_a(X_{r_{a,j}})|
\right)
\\
&\qquad\leq
\frac{2J_n(a)}{S_n(a)}
\max_{0\leq s\leq n}|g_a(X_s)|
\longrightarrow0
\qquad\text{almost surely}.
\end{align*}
Hence \textnormal{(B2)} holds.

To verify \textnormal{(B3)}, first note from
\eqref{eq:uniform-adaptive-lyapunov-moment} that, for every \(\delta>0\),
\begin{equation}
\mathcal V(X_t)
=
o\!\left(t^{1+\delta}\right)
\qquad\text{almost surely},
\label{eq:lyapunov-path-growth}
\end{equation}
again by Markov's inequality and Borel--Cantelli.  On \(\{a_t=a\}\), the definition of \(R_t(a)\) gives
\[
\mathbb E\!\left[R_t(a)^2\mid\mathcal H_{t-1}\right]
=
\operatorname{Var}\!\left(g_a(X_t)\mid\mathcal H_{t-1}\right).
\]
Therefore \eqref{eq:bias-growth-from-regeneration} and
\eqref{eq:adaptive-lyapunov-drift} give
\begin{align}
\mathbb E\!\left[R_t(a)^2\mid\mathcal H_{t-1}\right]
&\leq
\left\{
\mathbb E\!\left[|g_a(X_t)|^q\mid\mathcal H_{t-1}\right]
\right\}^{2/q}
\notag\\
&\leq
C\{1+\mathcal V(X_{t-1})\}^{2/q}
\label{eq:boundary-variance-lyapunov-bound}
\end{align}
for a finite constant \(C\) independent of \(t\).

At the \(k\)-th selected cycle,
\(N_{\tau_k(a)}(a)=k\).  Hence condition
\eqref{ass:polynomial-selection-frequency} implies
\begin{equation}
\tau_k(a)
\leq
C_a' k^{1/\kappa_a}
\qquad\text{eventually, almost surely}
\label{eq:selected-time-growth-bound}
\end{equation}
for some finite \(C_a'\).  Choose \(\delta>0\) sufficiently small that
\[
\frac{2(1+\delta)}{q\kappa_a}<1,
\]
which is possible because \(\kappa_a>2/q\).  Using
\eqref{eq:lyapunov-path-growth}--\eqref{eq:selected-time-growth-bound} in
\eqref{eq:boundary-variance-lyapunov-bound},
\[
\mathbb E\!\left[
R_{\tau_k(a)}(a)^2
\,\middle|\,
\mathcal H_{\tau_k(a)-1}
\right]
=
o\!\left(
 k^{2(1+\delta)/(q\kappa_a)}
\right)
\qquad\text{almost surely}.
\]
The exponent is strictly smaller than one, and hence
\[
\sum_{k=1}^{\infty}
\frac{
\mathbb E[R_{\tau_k(a)}(a)^2\mid
\mathcal H_{\tau_k(a)-1}]
}{k^2}
<\infty
\qquad\text{almost surely}.
\]
This is exactly \textnormal{(B3)}.
\end{proof}

\begin{remark}[Queueing interpretation of the common conditions]
\label{rem:common-lyapunov-queueing}
Condition \eqref{ass:common-fixed-D-drift} is a uniform stability condition
for the family of admissible actions at learning-cycle boundaries.  Its
role is stronger than merely requiring each frozen-action queue to be
stable: it controls moments along the persistent trajectory generated by
an adaptive sequence of actions.  Conditions of this type are standard in
online queueing optimization, where the admissible control set is typically
kept uniformly away from the instability boundary.

The drift condition \eqref{ass:common-fixed-D-drift} is a geometric drift
condition at fixed-$D$ cycle boundaries, not a consequence of ordinary
frozen-action stability alone.  In suitable light-tailed GI/G/1 settings it
may be verified by a common workload Lyapunov function or by comparison with
a uniformly stable dominating queue, together with the required uniform
moment control.  Condition \eqref{ass:regenerative-residual-q-moment} is a
separate $q$th-moment requirement on the centered residual busy-cycle cost;
for queue-length holding costs this requires suitable moments of the residual
busy-cycle duration and the area under the queue-length process.

For genuinely heavy-tailed primitives, such as lognormal service times, a
geometric drift condition of the form \eqref{ass:common-fixed-D-drift} need
not hold even though the queue is stable and many polynomial moments are
finite.  The corollary should therefore be viewed as one convenient
verification route, especially for sufficiently light-tailed models, rather
than as a general heavy-tail result.  Heavy-tailed models may instead be
handled through the direct conditions \textnormal{(B2)}--\textnormal{(B3)},
the selected-cycle moment route of Corollary~\ref{cor:block-moment-growth}, or
future subgeometric-drift extensions.  In particular, sufficiently high
finite regenerative moments alone do not imply the geometric drift condition
used in this corollary.
\end{remark}

\begin{remark}[Why action blocks matter]
\label{rem:why-action-blocks}
Theorem~\ref{thm:transient-bias-blocks} shows that the relevant measure of
persistent transient contamination is not the total number of fixed-\(D\)
cycles, but the number of separate operating blocks of action \(a\).
Within a contiguous block, the bias-function boundary increments telescope, so
only the state at the beginning and the state at the end of the block
remain.  Thus repeated fixed-\(D\) cycles under the same action can
accumulate arbitrarily large operating time while generating only one pair
of boundary terms.
\end{remark}


\subsection{Stagewise Top-$K$ consistency and guardian retention}
A coarse-to-fine procedure introduces a second asymptotic index: as the total
schedule index $N$ grows, every fixed early pruning stage must itself receive more
information. Otherwise a nonvanishing probability of irreversible early
mis-ranking can remain. The triangular-array result below supplies a
finite-sample error bound under Lyapunov-weighted variance control; the
guardian theorem turns that bound into a probability of preserving a path to
a master-grid optimizer. Almost-sure infinite-visit assumptions are not imposed
on actions discarded in a finite-budget run.

\subsubsection{Stagewise blocked-SAA consistency}
\label{subsec:stagewise-blocked-saa}

Let the fixed-depth refinement hierarchy have finite candidate grids
\[
\mathcal U_0,\ldots,\mathcal U_L,
\qquad
\mathcal U:=\bigcup_{\ell=0}^L\mathcal U_\ell\subseteq\mathcal A.
\]
The hierarchy is held fixed in the present asymptotic result. For each
schedule index $N$, let
\(\{\mathcal H_t^{(N)}\}_{t\geq0}\) denote the history of the corresponding
persistent learning experiment, let $a_t^{(N)}\in\mathcal U$ be
$\mathcal H_{t-1}^{(N)}$-measurable, and define
\[
I_t^{(N)}(a):=\mathbf 1\{a_t^{(N)}=a\}.
\]
Suppose that for every $a\in\mathcal U$, whenever $a_t^{(N)}=a$,
\begin{equation}
G_t^{(N)}-f(a)T_t^{(N)}
=
g_a(X_{t-1}^{(N)})-g_a(X_t^{(N)})+U_t^{(N)}(a),
\label{eq:triangular-cycle-decomposition}
\end{equation}
where
\begin{equation}
\mathbb E\!\left[
U_t^{(N)}(a)\mid\mathcal H_{t-1}^{(N)}
\right]=0.
\label{eq:triangular-innovation-zero-mean}
\end{equation}
As before, the innovation may be extended by zero outside
$\{a_t^{(N)}=a\}$.  The representation
\eqref{eq:triangular-cycle-decomposition} is the row-wise counterpart of
\eqref{eq:learning-cycle-regenerative-decomposition}.

Fix a stage $\ell\in\{0,\ldots,L\}$.  Let $\sigma_{\ell,N}$ be the cycle at
which the stage-$\ell$ ranking is made, and let $A_{\ell,N}\subseteq\mathcal
U_\ell$ be the active set at that stage.  For $a\in\mathcal U$, define the
cumulative number of sampled cycles, operating time, and observed cost up to
that decision time by
\begin{align}
n_{\ell,N}(a)
&:=\sum_{t=1}^{\sigma_{\ell,N}}I_t^{(N)}(a),
\label{eq:stage-count}
\\
S_{\ell,N}(a)
&:=\sum_{t=1}^{\sigma_{\ell,N}}I_t^{(N)}(a)T_t^{(N)},
\label{eq:stage-operating-time}
\\
C_{\ell,N}(a)
&:=\sum_{t=1}^{\sigma_{\ell,N}}I_t^{(N)}(a)G_t^{(N)}.
\label{eq:stage-cost}
\end{align}
Thus observations collected under an action at earlier stages are carried
forward automatically.  For $a\in A_{\ell,N}$, set
\begin{equation}
\widehat f_{\ell,N}(a)
:=
\frac{C_{\ell,N}(a)}{S_{\ell,N}(a)}.
\label{eq:stage-saa-estimator}
\end{equation}

\begin{lemma}[Finite-sample stagewise blocked-SAA error]
\label{lem:stagewise-blocked-saa}
Fix a stage $\ell$ with nonempty active set $A_{\ell,N}$. In addition to
\eqref{eq:triangular-cycle-decomposition}--\eqref{eq:triangular-innovation-zero-mean},
assume the following conditions. Constants are uniform in the schedule index $N$
and in the actions unless otherwise indicated.

\medskip
\noindent\textnormal{(S1) Bounded ranking time.}
The ranking time $\sigma_{\ell,N}$ is a stopping time and
$\sigma_{\ell,N}\le r_{\ell,N}$ almost surely, for a deterministic integer
$r_{\ell,N}$.
\label{ass:stage-horizon}

\medskip
\noindent\textnormal{(S2) Blocked sampling and possible boundaries.}
There are deterministic integers $m=m_{\ell,N}\ge1$, $j=j_{\ell,N}\ge1$,
and $\Lambda=\Lambda_{\ell,N}\ge1$ such that every active action has
$n_{\ell,N}(a)\ge m$. Up to $\sigma_{\ell,N}$ each action is operated in
at most $j$ maximal contiguous blocks. A deterministic set
$\mathcal B_{\ell,N}\subseteq\{0,\ldots,r_{\ell,N}\}$, of cardinality at
most $\Lambda$, contains both $b-1$ and $e$ for every possible action block
$[b,e]$ up to the ranking time. One can always use all indices
$\{0,\ldots,r_{\ell,N}\}$; a smaller deterministic covering set gives a
sharper bound.

\medskip
\noindent\textnormal{(S3) Common Lyapunov control.}
There are $\mathcal V\ge1$, $0<\rho<1$, $b\ge0$, and $v_0<\infty$ such that
\begin{equation}
\mathbb E[\mathcal V(X_t^{(N)})\mid\mathcal H_{t-1}^{(N)}]
\le\rho\mathcal V(X_{t-1}^{(N)})+b,
\qquad
\sup_N\mathbb E\mathcal V(X_0^{(N)})\le v_0.
\label{ass:stage-lyapunov}
\end{equation}
Consequently $\sup_{N,t}\mathbb E\mathcal V(X_t^{(N)})\le\bar v$, where
$\bar v:=\max\{v_0,b/(1-\rho)\}$.

\medskip
\noindent\textnormal{(S4) Bias-function growth.}
For some $q\ge1$ and $\kappa_g<\infty$,
\begin{equation}
|g_a(x)|^q\le\kappa_g\mathcal V(x),
\qquad a\in\mathcal U.
\label{ass:triangular-bias-moment}
\end{equation}

\medskip
\noindent\textnormal{(S5) Lyapunov-weighted innovation variance.}
For some $\kappa_U<\infty$,
\begin{equation}
\mathbb E[\{U_t^{(N)}(a)\}^2\mid\mathcal H_{t-1}^{(N)}]
\le\kappa_U\mathcal V(X_{t-1}^{(N)})
\quad\text{on }\{a_t^{(N)}=a\}.
\label{ass:triangular-innovation-second-moment}
\end{equation}

For $c>0$ define the duration-failure probability
\begin{equation}
\eta_{\ell,N}(c):=
\mathbb P\left(\min_{a\in A_{\ell,N}}
\frac{S_{\ell,N}(a)}{n_{\ell,N}(a)}<c\right).
\label{eq:duration-failure}
\end{equation}
Then, for every $\varepsilon>0$, the uniform estimation error satisfies
\begin{align}
e_{\ell,N}&:=\max_{a\in A_{\ell,N}}
|\widehat f_{\ell,N}(a)-f(a)|,
\label{eq:stage-uniform-consistency}\\
\mathbb P(e_{\ell,N}\ge\varepsilon)
&\le B_{\ell,N}(\varepsilon;c),
\label{eq:stage-finite-bound}\\
B_{\ell,N}(\varepsilon;c)
&:=\eta_{\ell,N}(c)
+\frac{4\kappa_U\bar v\,r_{\ell,N}}{(\varepsilon c m_{\ell,N})^2}
\notag\\[-2pt]
&\hspace{1cm}
+|\mathcal U|\Lambda_{\ell,N}\kappa_g\bar v
\left(\frac{4j_{\ell,N}}{\varepsilon c m_{\ell,N}}\right)^q.
\label{eq:stage-bound-definition}
\end{align}
The bound may of course be truncated at one.
\end{lemma}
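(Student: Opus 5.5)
The plan is to work on the event that all active actions have average cycle duration at least $c$, and to split the error tolerance $\varepsilon$ evenly between a boundary (bias-function) term and a martingale term. Fix $a\in A_{\ell,N}$. Multiply \eqref{eq:triangular-cycle-decomposition} by $I_t^{(N)}(a)$ and sum over $t\le\sigma_{\ell,N}$. The boundary increments telescope inside each maximal block $[b,e]$, exactly as in \eqref{eq:block-telescoping}, which gives
\[
C_{\ell,N}(a)-f(a)S_{\ell,N}(a)
=\sum_{\text{blocks }[b,e]}\{g_a(X_{b-1}^{(N)})-g_a(X_e^{(N)})\}
+\sum_{t=1}^{r_{\ell,N}}\mathbf 1\{t\le\sigma_{\ell,N}\}I_t^{(N)}(a)U_t^{(N)}(a).
\]
On $\{\min_{a}S_{\ell,N}(a)/n_{\ell,N}(a)\ge c\}$ we have $S_{\ell,N}(a)\ge c\,n_{\ell,N}(a)\ge cm$ by (S2), so the denominators are bounded below by $cm$. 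Hence $e_{\ell,N}\ge\varepsilon$ forces one of two things for some active $a$: the absolute boundary sum is at least $\varepsilon cm/2$, or the absolute innovation sum is at least $\varepsilon cm/2$. The complementary event contributes $\eta_{\ell,N}(c)$.

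First I would record the Lyapunov consequence in (S3): taking expectations in \eqref{ass:stage-lyapunov} gives $\mathbb E\mathcal V(X_t)\le\rho\,\mathbb E\mathcal V(X_{t-1})+b$, and induction yields $\mathbb E\mathcal V(X_t)\le\bar v$.

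\textbf{Boundary term.} By (S2), action $a$ has at most $j$ blocks, and every endpoint index $b-1$ or $e$ lies in the deterministic set $\mathcal B_{\ell,N}$. The boundary sum is therefore at most $2j\max_{s\in\mathcal B_{\ell,N}}|g_a(X_s)|$. Its exceeding $\varepsilon cm/2$ requires $|g_a(X_s)|\ge\varepsilon cm/(4j)$ for some $s\in\mathcal B_{\ell,N}$. Since $\mathcal B_{\ell,N}$ is deterministic, I will take a union bound over $a\in\mathcal U$ and $s\in\mathcal B_{\ell,N}$. For each pair, Markov's inequality with (S4) gives
\[
\mathbb P\bigl(|g_a(X_s)|^q\ge(\varepsilon cm/4j)^q\bigr)\le\kappa_g\bar v\,(4j/\varepsilon cm)^q.
\]
Summing over at most $|\mathcal U|\Lambda$ pairs gives the third term of $B_{\ell,N}$. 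Taking the union over all of $\mathcal U$, rather than only over the random set $A_{\ell,N}$, is what keeps this step free of measurability issues.

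\textbf{Innovation term.} Because $\sigma_{\ell,N}$ is a stopping time, $\{t\le\sigma_{\ell,N}\}\in\mathcal H_{t-1}$, and $I_t^{(N)}(a)$ is predictable. The summands are therefore martingale differences over the deterministic horizon $r_{\ell,N}$. Orthogonality of the increments and (S5) give
\[
\mathbb E\Bigl(\sum_{t}\mathbf 1\{t\le\sigma\}I_tU_t\Bigr)^2
\le\kappa_U\sum_{t\le r}\mathbb E[I_t^{(N)}(a)\mathcal V(X_{t-1})].
\]
Chebyshev's inequality at level $\varepsilon cm/2$ gives a tail bound of $4/(\varepsilon cm)^2$ times this quantity. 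Now union-bound over $a\in\mathcal U$. Since $\sum_a I_t^{(N)}(a)=1$, the actions share the cycle budget, and the sum collapses to $\kappa_U\sum_{t\le r}\mathbb E\mathcal V(X_{t-1})\le\kappa_U\bar v\,r$. This produces the second term without any factor $|\mathcal U|$.

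The step I expect to need the most care is this innovation bound. Two issues arise there. One must justify square-integrability, so that the cross terms vanish; I would truncate with $\mathbf 1\{\mathcal V(X_{t-1})\le k\}$, or simply argue via the conditional variance bound and monotone convergence. One must also check that the random ranking time and the adaptive selection enter only through predictable indicators. Once those points are settled, combining the three pieces by a union bound gives \eqref{eq:stage-finite-bound}.
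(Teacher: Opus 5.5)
Your proposal is correct and follows the paper's proof essentially step for step. It uses the same telescoped boundary bound $2j\max_{s\in\mathcal B_{\ell,N}}|g_a(X_s)|$, controlled by a Markov bound over the deterministic pairs $(a,s)\in\mathcal U\times\mathcal B_{\ell,N}$; the paper writes this as $\mathbb E M^q\le|\mathcal U|\Lambda\kappa_g\bar v$, which is equivalent. It also uses the same stopped-martingale second-moment bound, summed over actions via $\sum_a I_t^{(N)}(a)=1$ before Chebyshev.

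The truncation you anticipate is unnecessary. Each summand $\mathbf 1\{t\le\sigma_{\ell,N}\}I_t^{(N)}(a)U_t^{(N)}(a)$ already has second moment at most $\kappa_U\bar v<\infty$ by (S3) and (S5), and that is exactly how the paper justifies orthogonality.
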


\begin{proof}
Suppress $N$ and $\ell$ where unambiguous. Summing the cycle representation
gives, for each fixed $a\in\mathcal U$,
\begin{equation}
C(a)-f(a)S(a)=H(a)+Z(a),
\label{eq:stage-error-decomposition}
\end{equation}
where
\begin{align}
H(a)&:=\sum_{t=1}^{\sigma}I_t(a)
\{g_a(X_{t-1})-g_a(X_t)\},
\label{eq:stage-boundary-term}\\
Z(a)&:=\sum_{t=1}^{\sigma}I_t(a)U_t(a).
\label{eq:stage-innovation-term}
\end{align}
For the action-$a$ blocks $[b_i,e_i]$, $i=1,\ldots,J\le j$, telescoping yields
\[
H(a)=\sum_{i=1}^{J}\{g_a(X_{b_i-1})-g_a(X_{e_i})\},
\qquad |H(a)|\le2jM,
\]
where $M:=\max_{a\in\mathcal U,\,s\in\mathcal B_{\ell,N}}|g_a(X_s)|$.
Because the covering set $\mathcal B_{\ell,N}$ is deterministic, (S3)--(S4)
give
\[
\mathbb E M^q\le
\sum_{a\in\mathcal U}\sum_{s\in\mathcal B_{\ell,N}}
\mathbb E|g_a(X_s)|^q
\le |\mathcal U|\Lambda\kappa_g\bar v.
\]
This argument does not apply a deterministic-time moment bound directly at
an arbitrary data-dependent boundary time.

For the innovation term define, for $1\le t\le r$,
\[
D_t(a):=\mathbf1\{t\le\sigma\}I_t(a)U_t(a).
\]
Since $\{t\le\sigma\}=\{\sigma>t-1\}\in\mathcal H_{t-1}$ and $I_t(a)$
is predictable, $\mathbb E[D_t(a)\mid\mathcal H_{t-1}]=0$.
The terms are square integrable by (S3) and (S5), so the stopped sum
$Z(a)=\sum_{t=1}^rD_t(a)$ is centered and martingale orthogonality applies.
Summing over actions before discarding selection indicators gives
\begin{align}
\sum_{a\in\mathcal U}\mathbb E Z(a)^2
&=\sum_{t=1}^r\sum_{a\in\mathcal U}\mathbb E D_t(a)^2
\notag\\
&\le\kappa_U\sum_{t=1}^r
\mathbb E\left[\mathbf1\{t\le\sigma\}
\mathcal V(X_{t-1})\sum_{a\in\mathcal U}I_t(a)\right]
\notag\\
&\le\kappa_U\bar v r.
\label{eq:stage-innovation-variance}
\end{align}
Here exactly one action is operated in each cycle. Thus a union bound and
Chebyshev's inequality give
$\mathbb P(\max_a|Z(a)|\ge x)\le\kappa_U\bar v r/x^2$.

Outside the duration-failure event, $S(a)\ge cm$ for every active action, and
\[
e_{\ell,N}\le\frac{2jM+\max_a|Z(a)|}{cm}.
\]
If this quantity is at least $\varepsilon$, then either
$M\ge\varepsilon cm/(4j)$ or $\max_a|Z(a)|\ge\varepsilon cm/2$.
Applying the two tail bounds and adding $\eta_{\ell,N}(c)$ proves the claim.
\end{proof}

The three terms in \eqref{eq:stage-bound-definition} have distinct operational
meanings. The duration term prevents too little clock time from being collected
despite enough completed cycles; the innovation term controls centered
cycle-to-cycle noise at the data-dependent ranking time; and the boundary term
charges switching between separated action blocks. The lemma is therefore the
bridge between the persistent-trajectory queueing analysis and a finite-stage
ranking decision. It does not yet guarantee that a useful refinement branch is
retained; that step requires the guardian margin introduced below.

\begin{remark}[Imperfect observations at stagewise rankings]
\label{rem:stagewise-observation-error}
Suppose the implemented procedure uses
$\widetilde G_t^{(N)}=G_t^{(N)}+E_t^{(N)}$ in its cumulative cost
statistics and rankings. Let the histories, selected actions, active
sets, and ranking times describe this actual procedure, and assume
that the hypotheses of Lemma~\ref{lem:stagewise-blocked-saa} hold for
the true costs $G_t^{(N)}$ along its trajectory. In particular, actions
remain predictable with respect to the histories used in those
hypotheses.

For $a\in A_{\ell,N}$ define
\[
\widetilde f_{\ell,N}(a)
:=
\frac{\sum_{t=1}^{\sigma_{\ell,N}}
 I_t^{(N)}(a)\widetilde G_t^{(N)}}{S_{\ell,N}(a)},
\]
and let
\begin{equation}
d_{\ell,N}
:=
\max_{a\in A_{\ell,N}}
\frac{\left|\sum_{t=1}^{\sigma_{\ell,N}}
 I_t^{(N)}(a)E_t^{(N)}\right|}{S_{\ell,N}(a)}.
\label{eq:stage-observation-error}
\end{equation}
Here $\widehat f_{\ell,N}$ is computed from the true costs on the
same realized trajectory; it need not be observable. Writing
$\widetilde e_{\ell,N}:=
\max_{a\in A_{\ell,N}}|\widetilde f_{\ell,N}(a)-f(a)|$,
the triangle inequality gives
$\widetilde e_{\ell,N}\le e_{\ell,N}+d_{\ell,N}$. Hence, for every
$\varepsilon>0$,
\begin{equation}
\mathbb P(\widetilde e_{\ell,N}\ge\varepsilon)
\le
\widetilde B_{\ell,N}(\varepsilon;c)
:=
B_{\ell,N}(\varepsilon/2;c)
+\mathbb P(d_{\ell,N}\ge\varepsilon/2).
\label{eq:stage-observation-error-bound}
\end{equation}
Thus stagewise consistency transfers if $d_{\ell,N}\to0$ in probability
at every fixed stage, in addition to the conditions ensuring
$e_{\ell,N}\to0$.

The guardian arguments in Theorem~\ref{thm:topk-guardian-retention}
apply to rankings formed from $\widetilde f_{\ell,N}$ with the same
structural hypotheses and with $B_{\ell,N}$ replaced by
$\widetilde B_{\ell,N}$. The contribution of a nonautomatic stage
then becomes
\[
\widetilde b_{\ell,N}
=
B_{\ell,N}(\underline\Delta_\ell/4;c_\ell)
+\mathbb P(d_{\ell,N}\ge\underline\Delta_\ell/4),
\]
where $\underline\Delta_\ell$ is the margin defined in that theorem;
automatic stages still contribute zero. The final recommendation bound
uses $\widetilde B_{L,N}(\varepsilon;c_L)$.
\end{remark}

\begin{remark}[Connecting the variance bound to the Poisson representation]
\label{rem:weighted-variance-verification}
A state-dependent bound is more suitable for an unbounded queue than a
constant conditional-variance bound, but it remains a separate hypothesis.
Here is one sufficient verification. Under a fixed action and inherited state
$x$, put $Y=G-f(a)T$. Suppose
\[
\mathbb E_x^a Y^2\le\kappa_Y\mathcal V(x),\qquad
|g_a(x)|^2\le\kappa_{g,2}\mathcal V(x).
\]
The Poisson representation implies
$\mathbb E_x^a[Y+g_a(X_1)]=g_a(x)$, so
\begin{align*}
\mathbb E_x^a U^2
&=\operatorname{Var}_x^a(Y+g_a(X_1))\\
&\le2\kappa_Y\mathcal V(x)
+2\kappa_{g,2}\{\rho\mathcal V(x)+b\}\\
&\le\{2\kappa_Y+2\kappa_{g,2}(\rho+b)\}\mathcal V(x).
\end{align*}
The regenerative construction and Jensen's inequality can supply the growth
bound on $g_a$ from the corresponding boundary-reward moments. A bound on
the cycle reward $Y^2$ must still be checked. Neither frozen-action stability
alone nor the Lyapunov drift alone establishes (S5). The assumption $q\ge1$
in Lemma~\ref{lem:stagewise-blocked-saa} concerns its boundary estimate;
it does not relax the $q>2$ requirements of the earlier almost-sure results.
\end{remark}

The duration condition needed for consistency is
\begin{equation}
\textnormal{(S6)}\qquad \eta_{\ell,N}(c_\ell)\longrightarrow0
\quad\text{for some }c_\ell>0.
\label{ass:stage-positive-duration}
\end{equation}
It can be verified without independence of the adaptively selected durations.

\begin{proposition}[Duration control at a data-dependent ranking time]
\label{prop:stage-duration}
Assume (S1)--(S2). Suppose, uniformly over rows and histories, that the
conditional law of $T_t^{(N)}$ stochastically dominates a nonnegative random
variable $\xi$ with $0<\mu:=\mathbb E\xi<\infty$. Fix $0<c<\mu$ and write
$\psi(\theta):=\mathbb E e^{-\theta\xi}$. For sufficiently small $\theta>0$,
\begin{equation}
\kappa(\theta,c):=e^{\theta c}\psi(\theta)<1,
\qquad
\eta_{\ell,N}(c)\le |\mathcal U|\kappa(\theta,c)^{m_{\ell,N}}.
\label{eq:duration-exponential-bound}
\end{equation}
In particular, $m_{\ell,N}\to\infty$ implies (S6).
\end{proposition}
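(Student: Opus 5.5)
The plan is a union bound over actions followed by an exponential-supermartingale (Chernoff) bound along the selected cycles of each action. First, $\kappa(\theta,c)<1$ for small $\theta$ comes from differentiating at zero. We have $\log\kappa(\theta,c)=\theta c+\log\psi(\theta)$, and this vanishes at $\theta=0$. Since $\xi\ge0$ has finite mean, dominated convergence gives right-derivative $c-\mu<0$ at $0$. Hence $\kappa(\theta,c)<1$ for all sufficiently small $\theta>0$; fix such a $\theta$.

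Next, for each $a\in A_{\ell,N}$, consider the event $\{S_{\ell,N}(a)<c\,n_{\ell,N}(a)\}$. By (S2), $n_{\ell,N}(a)\ge m:=m_{\ell,N}$. The key reduction is to look only at the first $m$ selected cycles of $a$. Let $\tau_k$ be the $k$-th selection time of $a$ in row $N$. The paper's $\tau_k(a)$ is only predictable, but on the event in question $\tau_m\le\sigma_{\ell,N}\le r_{\ell,N}$, and everything else is defined on the finite horizon. Write $T_{(k)}:=T_{\tau_k}$.

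The reduction needs care because the ratio $S/n$ involves $n_{\ell,N}(a)$ selections, possibly more than $m$. If $S_{\ell,N}(a)<c\,n_{\ell,N}(a)$, it does not directly force $\sum_{k\le m}T_{(k)}<cm$. So I would instead bound the event that some $n\ge m$ has $\sum_{k\le n}T_{(k)}<cn$. Exponential tilting handles this uniformly: the bound for each fixed $n$ is $\kappa^n$, and summing over $n\ge m$ gives $\kappa^m/(1-\kappa)$. If the stated constant without the $1/(1-\kappa)$ factor is to be matched exactly, I would try a maximal inequality for the nonnegative supermartingale
\[
W_n:=\exp\Bigl(\theta c n-\theta\sum_{k\le n}T_{(k)}\Bigr)\kappa^{-n}\cdot\kappa^{n}\ \text{(appropriately normalized)}.
\]
Concretely, $W_n=\exp\bigl(\theta(cn-\sum_{k\le n}T_{(k)})\bigr)$. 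Dominance gives
\[
\mathbb E\bigl[e^{-\theta T_{(k)}}\mid\mathcal H_{\tau_k-1}\bigr]\le\psi(\theta),
\]
so $\mathbb E[W_n\mid\cdot]\le\kappa W_{n-1}$. Then $\kappa^{-n}W_n$ is a supermartingale. The event above is $\{W_n>1\ \text{for some}\ n\ge m\}$, and Ville's inequality applied from time $m$ onward gives the bound.

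This is the step I expect to be the main obstacle. It is also where the predictable-index argument from the proof of Lemma~\ref{lem:martingale-averaging} is reused. Stochastic dominance of the conditional law yields $\mathbb E[h(T_t)\mid\mathcal H_{t-1}]\le\mathbb E h(\xi)$ for decreasing $h$, here $h(s)=e^{-\theta s}$. Evaluating this at the random predictable index $\tau_k$ requires summing over $\{\tau_k=t\}\in\mathcal H_{t-1}$. Selections beyond the horizon can be padded with independent copies of $\xi$ so the sequence is always infinite.

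If the sharp constant resists, the fallback is to accept the factor $(1-\kappa)^{-1}$, which still yields (S6). Alternatively, one can note that in the implemented Step 2 schedule $n_{\ell,N}(a)=m_{\ell,N}$ exactly for active actions, so only $n=m$ matters and the plain Chernoff bound $\mathbb P(\sum_{k\le m}T_{(k)}<cm)\le e^{\theta cm}\psi(\theta)^m=\kappa^m$ applies directly.

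Finally, a union bound over the at most $|\mathcal U|$ actions in $A_{\ell,N}$ gives $\eta_{\ell,N}(c)\le|\mathcal U|\kappa(\theta,c)^{m_{\ell,N}}$. If $m_{\ell,N}\to\infty$, this tends to $0$, which is (S6) with $c_\ell=c$.
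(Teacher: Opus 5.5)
Your proof is correct, and its core matches the paper's. Both use the exponentially tilted process $\exp\{-\theta\,(\text{time under }a)\}/\psi(\theta)^{\#\{\text{selections of }a\}}$, which is a nonnegative supermartingale by conditional dominance and predictability of the selections, followed by a union bound over actions. Your $\kappa^{-n}W_n$ is exactly the paper's process read at the selection epochs.

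The difference is how the random count $n_{\ell,N}(a)\ge m$ is handled. The paper stays in calendar time with $L_t(a)=\exp\{-\theta\sum_{s\le t}I_s(a)T_s\}/\psi(\theta)^{N_t(a)}$, which is simply frozen on cycles where $a$ is not run. Optional stopping at the bounded stopping time $\sigma_{\ell,N}$ of (S1) gives $\mathbb E L_\sigma(a)\le1$. On $\{a\in A_{\ell,N},\ S(a)<cn(a)\}$ one has $L_\sigma(a)>\kappa^{-n(a)}\ge\kappa^{-m}$ because $\kappa<1$, and Markov's inequality then gives $\kappa^m$ directly. So the step you flagged as the main obstacle disappears: $\kappa^{-n}$ increases in $n$, extra selections only help, and no reduction to the first $m$ selections is needed.

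Your maximal-inequality route also gives the sharp constant, so the hedging is unnecessary. Since $\kappa<1$, $W_n$ is itself a nonnegative supermartingale with $\mathbb E W_m\le\kappa^m$, and Ville's inequality from time $m$ yields $\mathbb P(\sup_{n\ge m}W_n\ge1)\le\kappa^m$.

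Each route buys something different:
\begin{itemize}
\item Your version is uniform in $n$, so it would hold even at a ranking time that is not a stopping time.
\item It costs reindexing by the predictable times $\tau_k$, the filtration $\mathcal H_{\tau_k}$, and padding with independent copies of $\xi$ on an enlarged space.
\item The paper needs none of that, because it evaluates its supermartingale only once, at $\sigma_{\ell,N}$.
\end{itemize}

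Three small points:
\begin{itemize}
\item Your first displayed definition of $W_n$ is garbled ($\kappa^{-n}\cdot\kappa^{n}=1$); replace it with the ``concretely'' version.
\item The union bound should run over the fixed set $\mathcal U$, not the random set $A_{\ell,N}$. Your per-action bound already does not depend on $a$ being active, so this is only phrasing.
\item The fallback $n_{\ell,N}(a)=m_{\ell,N}$ holds for the implemented Step~2 schedule but not under the general (S2) of the proposition, so it would prove only a special case. It is not needed.
\end{itemize}
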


\begin{proof}
The right derivative at zero of $e^{\theta c}\psi(\theta)$ is $c-\mu<0$,
which proves the existence of $\theta$. Suppress $N$, fix $a$, and define
\[
N_t(a):=\sum_{s=1}^tI_s(a),\qquad
L_t(a):=\frac{\exp\{-\theta\sum_{s=1}^tI_s(a)T_s\}}
{\psi(\theta)^{N_t(a)}}.
\]
Conditional stochastic domination implies
$\mathbb E[e^{-\theta T_t}\mid\mathcal H_{t-1}]\le\psi(\theta)$.
Predictability of $I_t(a)$ therefore makes $L_t(a)$ a nonnegative
supermartingale with $L_0(a)=1$. At the bounded stopping time $\sigma$,
$\mathbb E L_\sigma(a)\le1$. On
$\{a\in A_{\ell,N},\ S(a)<cn(a)\}$, (S2) gives
\[
L_\sigma(a)>\kappa(\theta,c)^{-n(a)}
\ge\kappa(\theta,c)^{-m}.
\]
Markov's inequality bounds this event by $\kappa(\theta,c)^m$; a union bound
over the fixed set $\mathcal U$ proves the result. This is the standard
nonnegative-supermartingale method; see \cite{HowardEtAl2020} for its broader
use. No independence after adaptive selection, and no conditioning on a
random selected sample size, is required.
\end{proof}

\begin{remark}[Poisson designated events]
\label{rem:poisson-duration}
If each cycle ends at its $D$th external Poisson arrival, the action is held
fixed during the cycle, and $\lambda(a)\le\bar\lambda$, then conditional on
the past $T_t$ has law $\operatorname{Gamma}(D,\lambda(a_t))$ (shape and
rate). It dominates $\xi\sim\operatorname{Gamma}(D,\bar\lambda)$.
Taking $c=D/(2\bar\lambda)$ and $\theta=\bar\lambda$ gives
\begin{equation}
\eta_{\ell,N}(c)
\le |\mathcal U|\exp\{-D(\log2-1/2)m_{\ell,N}\}.
\label{eq:poisson-duration-explicit}
\end{equation}
A simple counting process with uniformly bounded conditional event intensity
admits the same argument by a Poisson envelope. For example, exponential
service completions have intensity at most $\bar s\bar\nu$ when at most
$\bar s$ servers operate at rates at most $\bar\nu$; use
$\operatorname{Gamma}(D,\bar s\bar\nu)$ as the lower bound. Batch events
counted as several individual events require a different envelope.
For nonexponential completion-based cycles, a suitable duration lower bound
requires separate verification; ordinary service-time moments do not suffice.
External-Poisson-arrival cycles avoid this residual-service issue, even with
nonexponential service. Their duration verification does not, by itself,
verify the other hypotheses of Lemma~\ref{lem:stagewise-blocked-saa}.
\end{remark}

\begin{corollary}[Stagewise consistency for increasing budgets]
\label{cor:stagewise-asymptotic}
Under (S1)--(S6), if
\begin{equation}
\frac{r_{\ell,N}}{m_{\ell,N}^2}\longrightarrow0,
\qquad
\frac{j_{\ell,N}\Lambda_{\ell,N}^{1/q}}{m_{\ell,N}}\longrightarrow0,
\label{ass:triangular-stage-growth}
\end{equation}
then $e_{\ell,N}\to0$ in probability. The same conclusion holds under
(S1)--(S5), the duration domination in Proposition~\ref{prop:stage-duration},
and \eqref{ass:triangular-stage-growth}.
\end{corollary}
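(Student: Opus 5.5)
The plan is to apply Lemma~\ref{lem:stagewise-blocked-saa} directly and show that each of the three terms of $B_{\ell,N}(\varepsilon;c_\ell)$ in \eqref{eq:stage-bound-definition} tends to zero for every fixed $\varepsilon>0$. Then $\mathbb P(e_{\ell,N}\ge\varepsilon)\to0$ for every $\varepsilon>0$, which is convergence in probability. Everything else in the bound (the constants $\kappa_U$, $\kappa_g$, $\bar v$, $|\mathcal U|$, $q$) is uniform in $N$ by (S3)--(S5), so only the $N$-dependent quantities need attention.

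The three terms are handled as follows.
\begin{enumerate}[label=(\roman*),leftmargin=2em]
\item With $c=c_\ell$ from (S6), the duration term $\eta_{\ell,N}(c_\ell)$ tends to zero by hypothesis.
\item The innovation term equals $4\kappa_U\bar v\,(\varepsilon c_\ell)^{-2}\,r_{\ell,N}/m_{\ell,N}^2$. It vanishes by the first condition in \eqref{ass:triangular-stage-growth}.
\item The boundary term is
\[
|\mathcal U|\kappa_g\bar v\,(4/(\varepsilon c_\ell))^q\,\Lambda_{\ell,N}\,(j_{\ell,N}/m_{\ell,N})^q
=|\mathcal U|\kappa_g\bar v\,(4/(\varepsilon c_\ell))^q\,\bigl(j_{\ell,N}\Lambda_{\ell,N}^{1/q}/m_{\ell,N}\bigr)^q .
\]
Since $q\ge1$ is fixed, this tends to zero by the second condition in \eqref{ass:triangular-stage-growth}.
\end{enumerate}

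The only step needing any care is the algebraic rewriting in (iii). One has to check that $\Lambda(j/m)^q=(j\Lambda^{1/q}/m)^q$, which is exactly why the growth condition is phrased with $\Lambda^{1/q}$.

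For the second statement, I would replace (S6) by Proposition~\ref{prop:stage-duration}. Fix any $c\in(0,\mu)$ and choose $\theta$ so that $\kappa(\theta,c)<1$. Then $\eta_{\ell,N}(c)\le|\mathcal U|\kappa^{m_{\ell,N}}$.

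It remains to check that $m_{\ell,N}\to\infty$. This follows from \eqref{ass:triangular-stage-growth}: since $j_{\ell,N}\ge1$ and $\Lambda_{\ell,N}\ge1$, we have $1/m_{\ell,N}\le j_{\ell,N}\Lambda_{\ell,N}^{1/q}/m_{\ell,N}\to0$. Hence (S6) holds with $c_\ell=c$, and the first part applies.

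I expect no real obstacle here; the result is a bookkeeping consequence of the lemma. The one point to state explicitly is that the same $c_\ell$ must be used in both $\eta_{\ell,N}$ and the remaining two terms of $B_{\ell,N}$, so that all three terms are evaluated at the same $c$.
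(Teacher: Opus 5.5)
Your proposal is correct and matches the paper's proof: each of the three terms of $B_{\ell,N}(\varepsilon;c_\ell)$ vanishes under the growth conditions, and in the second formulation Proposition~\ref{prop:stage-duration} supplies (S6) once $m_{\ell,N}\to\infty$. The only difference is how you get $m_{\ell,N}\to\infty$: you use the second growth condition together with $j_{\ell,N},\Lambda_{\ell,N}\ge1$, whereas the paper uses the first condition together with $r_{\ell,N}\ge m_{\ell,N}$, which holds because the active set is nonempty; both are valid.
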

\begin{proof}
Each term of \eqref{eq:stage-bound-definition} vanishes. For the second
formulation, nonemptiness of the active set gives $r_{\ell,N}\ge m_{\ell,N}$,
so the first growth condition forces $m_{\ell,N}\to\infty$; then
Proposition~\ref{prop:stage-duration} supplies (S6).
\end{proof}

\begin{remark}[Verification for the blocked CTF schedule]
\label{rem:triangular-block-simplification}
For the algorithm in Section~\ref{subsec:practical-topk}, the hierarchy and
depth are fixed and the cumulative targets are deterministic and
nondecreasing. By stage $\ell$, every action has at most $m_{\ell,N}$ cycles,
so one may take $r_{\ell,N}=|\mathcal U|m_{\ell,N}$ and
$j_{\ell,N}\le\ell+1$. There are finitely many possible sequences of active
sets and within-stage block orders, independently of $N$. For each such
sequence, the cycle counts at block boundaries are determined by the stage
targets and the previous sampling of each action. Taking the union over all
these sequences produces a deterministic covering set $\mathcal B_{\ell,N}$
with $\Lambda_{\ell,N}\le\Lambda_\ell<\infty$, independently of $N$.
Its indices change with $N$, but their number is bounded. Thus
\eqref{ass:triangular-stage-growth} follows from $m_{\ell,N}\to\infty$
even when $q=1$. A small number of realized blocks alone would not justify
this conclusion for an arbitrary adaptive within-block stopping rule.

These guarantees assume that the prescribed sampling stages are completed.
A hard elapsed-time cutoff that interrupts a stage requires an additional
bound on the probability of failing to reach its target; it is not covered
merely by the lower-duration estimate (S6).
\end{remark}

\begin{remark}[Role of the almost-sure theory]
\label{rem:triangular-connection}
Theorem~\ref{thm:direct-saa-consistency} describes an infinite operating
trajectory with persistent sampling of the actions under consideration.
An irreversibly pruned action in a single finite-budget CTF run does not
satisfy that premise. We use the almost-sure results to explain and verify
direct SAA under persistent sampling, not to assert an almost-sure guarantee
for the pruning algorithm. The applicable algorithmic limit is instead
Corollary~\ref{cor:stagewise-asymptotic}: across experiments with increasing
budget, every fixed stage receives increasing information before pruning.
The shared Poisson/boundary decomposition connects the two analyses.
\end{remark}

\subsubsection{Top-$K$ guardian retention}
\label{subsec:topk-guardian}

For $a\in\mathcal U_\ell$, $\ell<L$, let
$\operatorname{Ch}_\ell(a)\subseteq\mathcal U_{\ell+1}$ be the set of
children generated when $a$ is refined.  Throughout this subsection, assume that $\mathcal U_L$ is nonempty and
$\operatorname{Ch}_\ell(a)\neq\varnothing$ for every
$a\in\mathcal U_\ell$ and $\ell<L$.  Hence every active set generated from
a nonempty retained set is nonempty.  Define the master-grid descendant sets
recursively by
\begin{equation}
\operatorname{Desc}_L(a):=\{a\},
\qquad a\in\mathcal U_L,
\label{eq:master-descendant-terminal}
\end{equation}
and, for $\ell<L$,
\begin{equation}
\operatorname{Desc}_\ell(a)
:=
\bigcup_{b\in\operatorname{Ch}_\ell(a)}
\operatorname{Desc}_{\ell+1}(b).
\label{eq:master-descendant-recursion}
\end{equation}
Nonempty child sets guarantee continuation, but do not guarantee coverage
of the master grid. A convenient objective-independent coverage check is
\[
\bigcup_{a\in\mathcal U_\ell}\operatorname{Ch}_\ell(a)
=\mathcal U_{\ell+1},\qquad 0\le\ell<L.
\]
It implies $\bigcup_{a\in\mathcal U_0}\operatorname{Desc}_0(a)=\mathcal U_L$.
The theorem below needs only the weaker condition that at least one
master-grid optimizer is reachable from $\mathcal U_0$; this is not implied
by nested grids alone.

Let
\begin{equation}
f_L^*
:=
\min_{a\in\mathcal U_L}f(a),
\qquad
\mathcal U_L^*
:=
\operatorname*{argmin}_{a\in\mathcal U_L}f(a).
\label{eq:master-grid-optimal-set}
\end{equation}
Thus multiple master-grid optimizers are allowed.

For a nonempty active set $A\subseteq\mathcal U_\ell$, define its guardian set
by
\begin{equation}
\operatorname{Gd}_\ell(A)
:=
\left\{
a\in A:
\operatorname{Desc}_\ell(a)\cap\mathcal U_L^*\neq\varnothing
\right\}.
\label{eq:guardian-set}
\end{equation}
An action is therefore a guardian precisely when retaining it leaves at least
one master-grid optimizer reachable through subsequent refinement.

Suppose $G:=\operatorname{Gd}_\ell(A)\neq\varnothing$, and let
\begin{equation}
f_{\ell,G}(A)
:=
\min_{g\in G}f(g).
\label{eq:best-guardian-value}
\end{equation}
For an integer $K\geq1$, let $\nu_{\ell,K}(A)$ denote the $K$th order
statistic of the action-indexed list
\[
\bigl(f(a)\bigr)_{a\in A\setminus G}.
\]
Each distinct action contributes one entry, so equal objective values are
counted with multiplicity. For example, nonguardian costs $1,1,3$ have
second order statistic $1$. The same convention applies to empirical
order statistics; the fixed tie-breaking rule orders the actions.
If $|A\setminus G|<K$, set $\nu_{\ell,K}(A):=+\infty$.  Define the guardian
separation margin
\begin{equation}
\Delta_{\ell,K}(A)
:=
\nu_{\ell,K}(A)-f_{\ell,G}(A).
\label{eq:guardian-separation-margin}
\end{equation}

At stage $\ell<L$, retain $k_\ell(A_{\ell,N})$ parents, where
\[
k_\ell(A):=\min\{K_\ell,|A|\},\qquad K_\ell\ge1.
\]
Let $\widehat{\mathcal R}_{\ell,N}$ be the corresponding empirical top set
under the fixed tie-breaking rule, and put
\begin{equation}
A_{\ell+1,N}:=\bigcup_{a\in\widehat{\mathcal R}_{\ell,N}}
\operatorname{Ch}_\ell(a).
\label{eq:topk-child-update}
\end{equation}
The relevant family of active sets includes every possible retention that
keeps a guardian, not just the exact-objective refinement path. Define
\begin{align}
\mathfrak A_0^+&:=\{\mathcal U_0\},\notag\\
\mathfrak A_{\ell+1}^+
&:=\left\{\bigcup_{a\in R}\operatorname{Ch}_\ell(a):
\begin{array}{l}
A\in\mathfrak A_\ell^+,\ R\subseteq A,\\
|R|=k_\ell(A),\ R\cap\operatorname{Gd}_\ell(A)\ne\varnothing
\end{array}\right\}.
\label{eq:guardian-active-family}
\end{align}
This family is finite. Its enumeration can nevertheless be large; a small
retention count alone does not guarantee a cheap exhaustive audit.

\begin{theorem}[Finite-sample Top-$K$ guardian retention]
\label{thm:topk-guardian-retention}
Assume $A_{0,N}=\mathcal U_0$ and
$\operatorname{Gd}_0(\mathcal U_0)\ne\varnothing$. At each stage $\ell<L$
and for every $A\in\mathfrak A_\ell^+$ with at least $k_\ell(A)$
nonguardians, assume
\begin{equation}
\Delta_{\ell,k_\ell(A)}(A)>0.
\label{ass:guardian-positive-margin}
\end{equation}
If such sets exist, let
\begin{equation}
\underline\Delta_\ell:=
\min_{\substack{A\in\mathfrak A_\ell^+\\
|A\setminus\operatorname{Gd}_\ell(A)|\ge k_\ell(A)}}
\Delta_{\ell,k_\ell(A)}(A)>0.
\label{eq:uniform-guardian-margin}
\end{equation}
Otherwise call the stage automatic. Define
\begin{equation}
F_{\ell,N}:=\{\operatorname{Gd}_\ell(A_{\ell,N})\ne\varnothing\}.
\label{eq:guardian-survival-event}
\end{equation}
Then $F_{\ell+1,N}\subseteq F_{\ell,N}$, and on $F_{\ell,N}$,
$A_{\ell,N}\in\mathfrak A_\ell^+$. At a nonautomatic stage,
\begin{equation}
F_{\ell,N}\cap\{e_{\ell,N}<\underline\Delta_\ell/2\}
\subseteq F_{\ell+1,N};
\label{eq:one-stage-guardian-retention}
\end{equation}
automatic stages never lose a guardian.

If (S1)--(S5) hold at each stage, choose $c_\ell>0$ and put
$b_{\ell,N}:=B_{\ell,N}(\underline\Delta_\ell/2;c_\ell)$ at nonautomatic
stages and $b_{\ell,N}:=0$ at automatic stages. Then
\begin{equation}
\mathbb P(F_{L,N}^c)\le\sum_{\ell=0}^{L-1}b_{\ell,N}.
\label{eq:guardian-loss-finite-bound}
\end{equation}
For $\widehat a_N\in\operatorname*{argmin}_{a\in A_{L,N}}
\widehat f_{L,N}(a)$ and every $\varepsilon>0$,
\begin{equation}
\mathbb P(f(\widehat a_N)-f_L^*>2\varepsilon)
\le\sum_{\ell=0}^{L-1}b_{\ell,N}+B_{L,N}(\varepsilon;c_L).
\label{eq:recommendation-finite-bound}
\end{equation}
No independence between stages is required.
\end{theorem}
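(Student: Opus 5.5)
I would prove the theorem in three layers: a deterministic, pathwise statement about the active-set recursion; a one-stage ranking argument on the event of small estimation error; and a union bound that uses Lemma~\ref{lem:stagewise-blocked-saa} at every stage.

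\emph{Structural part.} First I would show by induction on $\ell$ that $F_{\ell,N}$ implies $A_{\ell,N}\in\mathfrak A_\ell^+$. The base case holds because $A_{0,N}=\mathcal U_0$.

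For the inductive step, suppose $A_{\ell+1,N}$ contains a guardian $b$. Then $\operatorname{Desc}_{\ell+1}(b)$ meets $\mathcal U_L^*$. The action $b$ is a child of some retained $a\in\widehat{\mathcal R}_{\ell,N}$, and by the recursion \eqref{eq:master-descendant-recursion} that parent $a$ is itself a guardian of $A_{\ell,N}$. This gives $F_{\ell+1,N}\subseteq F_{\ell,N}$. It also shows that $\widehat{\mathcal R}_{\ell,N}$ meets $\operatorname{Gd}_\ell(A_{\ell,N})$ and has size $k_\ell(A_{\ell,N})$. Since $A_{\ell,N}\in\mathfrak A_\ell^+$ by induction, definition \eqref{eq:guardian-active-family} puts $A_{\ell+1,N}$ in $\mathfrak A_{\ell+1}^+$.

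Conversely, if a retained parent $a$ is a guardian, some child of $a$ carries the optimizer-reaching path and is a guardian of $A_{\ell+1,N}$. Retention of any guardian therefore implies $F_{\ell+1,N}$.

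\emph{One-stage retention.} Work on $F_{\ell,N}$ and write $A=A_{\ell,N}$ and $k=k_\ell(A)$.

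At an automatic stage $A$ has fewer than $k$ nonguardians. Any $k$ retained actions then include a guardian, so no guardian is lost.

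At a nonautomatic stage, assume $e_{\ell,N}<\underline\Delta_\ell/2$ and let $g$ attain $f_{\ell,G}(A)$. Then $\widehat f_{\ell,N}(g)<f_{\ell,G}(A)+\underline\Delta_\ell/2$. Any nonguardian $a$ with $f(a)\ge\nu_{\ell,k}(A)$ has $\widehat f_{\ell,N}(a)>\nu_{\ell,k}(A)-\underline\Delta_\ell/2\ge f_{\ell,G}(A)+\underline\Delta_\ell/2$, using $\Delta_{\ell,k}(A)\ge\underline\Delta_\ell$. Hence $g$ strictly beats every such nonguardian.

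The only actions that can rank ahead of $g$ are therefore other guardians, or nonguardians among the first $k-1$ in the order-statistic list. The latter number at most $k-1$. So if the retained set contained no guardian, all $k$ of its members would be nonguardians ranked ahead of $g$, which is impossible.

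The main obstacle is ties among nonguardians sharing the value $\nu_{\ell,k}$. The multiplicity convention and the strict inequalities take care of this: every nonguardian outside the first $k-1$ order-statistic positions has value at least $\nu_{\ell,k}$, so strict beating makes the fixed tie-breaking rule irrelevant. This proves \eqref{eq:one-stage-guardian-retention}.

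\emph{Probabilistic bounds.} Using the nesting $F_{\ell+1,N}\subseteq F_{\ell,N}$, I would decompose
$F_{L,N}^c\subseteq\bigcup_{\ell<L}\bigl(F_{\ell,N}\setminus F_{\ell+1,N}\bigr)$, together with $F_{0,N}^c=\varnothing$.

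At a nonautomatic stage the one-stage result gives $F_{\ell,N}\setminus F_{\ell+1,N}\subseteq\{e_{\ell,N}\ge\underline\Delta_\ell/2\}$. Lemma~\ref{lem:stagewise-blocked-saa} bounds the probability of this event by $b_{\ell,N}$. Automatic stages contribute the empty set. The union bound then yields \eqref{eq:guardian-loss-finite-bound}. No independence between stages is needed, since each event is bounded marginally.

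For the recommendation, work on $F_{L,N}\cap\{e_{L,N}<\varepsilon\}$. The guardian set at level $L$ consists of master-grid optimizers, so some $a^*\in A_{L,N}\cap\mathcal U_L^*$. It follows that $f(\widehat a_N)<\widehat f_{L,N}(\widehat a_N)+\varepsilon\le\widehat f_{L,N}(a^*)+\varepsilon<f_L^*+2\varepsilon$. The complement of this event has probability at most $\sum_\ell b_{\ell,N}+B_{L,N}(\varepsilon;c_L)$, which is \eqref{eq:recommendation-finite-bound}.
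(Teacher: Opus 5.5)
Your proposal is correct and follows essentially the same route as the paper. It uses the guardian-parent argument for nesting and for membership in $\mathfrak A_\ell^+$, a one-stage margin argument, and a first-loss-stage union bound, with Lemma~\ref{lem:stagewise-blocked-saa} applied once more at level $L$ for the recommendation. The only cosmetic difference is at the one-stage step: you count at most $k-1$ nonguardians with $f<\nu_{\ell,k}$, whereas the paper compares with the empirical order statistic via $\widehat\nu_{\ell,k}\ge\nu_{\ell,k}-e_{\ell,N}$.
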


\begin{proof}
If a child at stage $\ell+1$ is a guardian, any retained parent producing
that child has a master-grid optimizer among its descendants. That parent
is a guardian at stage $\ell$. This proves $F_{\ell+1,N}\subseteq F_{\ell,N}$
and that the retained set meets $\operatorname{Gd}_\ell(A_{\ell,N})$
whenever $F_{\ell+1,N}$ occurs. Induction from $A_{0,N}=\mathcal U_0$
using \eqref{eq:guardian-active-family} proves membership in
$\mathfrak A_\ell^+$ on $F_{\ell,N}$.

On $F_{\ell,N}$ write $A=A_{\ell,N}$, $G=\operatorname{Gd}_\ell(A)$,
and $k=k_\ell(A)$. If $|A\setminus G|<k$, every retained $k$-set contains
a guardian. Otherwise choose $g^*\in\operatorname*{argmin}_{g\in G}f(g)$
and put $\Delta=\Delta_{\ell,k}(A)\ge\underline\Delta_\ell$.
Uniform error smaller than $\Delta/2$ gives
\[
\widehat f_{\ell,N}(g^*)<f_{\ell,G}(A)+\Delta/2,
\qquad
\widehat\nu_{\ell,k}(A)>f_{\ell,G}(A)+\Delta/2,
\]
where $\widehat\nu_{\ell,k}$ is the $k$th empirical nonguardian order
statistic. The second inequality follows from
$\widehat\nu_{\ell,k}\ge\nu_{\ell,k}-e_{\ell,N}$.
Thus $k$ nonguardians cannot all rank ahead of $g^*$, and the retained set
contains a guardian. At least one of its children is again a guardian,
proving \eqref{eq:one-stage-guardian-retention}.

Because $F_{0,N}$ holds, loss of every guardian by the final stage has a
first loss stage. Consequently
\begin{equation}
\mathbb P(F_{L,N}^c)\le
\sum_{\ell\ \mathrm{nonautomatic}}
\mathbb P\big(F_{\ell,N}\cap
\{e_{\ell,N}\ge\underline\Delta_\ell/2\}\big).
\label{eq:guardian-loss-union-bound}
\end{equation}
Lemma~\ref{lem:stagewise-blocked-saa} proves
\eqref{eq:guardian-loss-finite-bound}. Finally, on $F_{L,N}$ choose
$a^*\in A_{L,N}\cap\mathcal U_L^*$. Then
\[
f(\widehat a_N)\le\widehat f_{L,N}(\widehat a_N)+e_{L,N}
\le\widehat f_{L,N}(a^*)+e_{L,N}\le f_L^*+2e_{L,N}.
\]
Applying the lemma once more proves \eqref{eq:recommendation-finite-bound}.
\end{proof}

\paragraph{Completed schedules and clock-time cutoffs.}
The bound concerns the completed prescribed schedule. A capped procedure
follows the same sampling rule until completion or a hard elapsed-time
cutoff $t_N^{\max}$. Couple it to the full scheduled run.
Let $\widehat a_N^{\rm cap}$ equal $\widehat a_N$ when
$H_N\le t_N^{\max}$, and use a specified feasible fallback otherwise.
The event of an inaccurate capped recommendation is contained in the union
of an inaccurate full recommendation and noncompletion. Hence
\[
\mathbb P\{f(\widehat a_N^{\rm cap})-f_L^*>2\varepsilon\}
\le \sum_{\ell=0}^{L-1}b_{\ell,N}
+B_{L,N}(\varepsilon;c_L)+\mathbb P(H_N>t_N^{\max}).
\]
The last probability requires an additional completion-time bound; the
lower-duration condition (S6) does not control it.
Lemma~\ref{lem:stagewise-blocked-saa} is applied to the full run, without
conditioning on completion. Clipping its ranking time at a clock cutoff
does not preserve the minimum sample counts in (S2).

\begin{corollary}[Master-grid consistency]
\label{cor:topk-master-grid-consistency}
Suppose the hierarchy and positive guardian margins in
Theorem~\ref{thm:topk-guardian-retention} are fixed. If
$e_{\ell,N}\to0$ in probability for each $\ell=0,\ldots,L$, then
\begin{equation}
\mathbb P(F_{L,N})\to1,\qquad
f(\widehat a_N)-f_L^*\to0\quad\text{in probability}.
\label{eq:master-grid-value-consistency}
\end{equation}
In fact $\mathbb P(\widehat a_N\in\mathcal U_L^*)\to1$.
Corollary~\ref{cor:stagewise-asymptotic} supplies sufficient conditions for
the required stagewise errors to vanish.
\end{corollary}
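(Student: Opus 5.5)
The plan is to deduce everything from the deterministic inclusions in Theorem~\ref{thm:topk-guardian-retention}, replacing the explicit bounds $b_{\ell,N}$ there by the hypothesised convergence $e_{\ell,N}\to0$ in probability. The hierarchy and margins are fixed, so each nonautomatic stage has a fixed $\underline\Delta_\ell>0$. The inclusion \eqref{eq:one-stage-guardian-retention} and the first-loss-stage argument give the pathwise bound \eqref{eq:guardian-loss-union-bound}. That bound needs only the structural part of the theorem, not Lemma~\ref{lem:stagewise-blocked-saa}. Each summand is at most $\mathbb P(e_{\ell,N}\ge\underline\Delta_\ell/2)$, which tends to zero. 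There are finitely many stages, so $\mathbb P(F_{L,N}^c)\to0$.

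For the value statement, on $F_{L,N}$ I will use the chain already shown in the theorem's proof: $f(\widehat a_N)-f_L^*\le2e_{L,N}$. The left side is nonnegative because $\widehat a_N\in A_{L,N}\subseteq\mathcal U_L$. Hence, for every $\varepsilon>0$,
\[
\mathbb P\bigl(f(\widehat a_N)-f_L^*>\varepsilon\bigr)\le\mathbb P(F_{L,N}^c)+\mathbb P(e_{L,N}\ge\varepsilon/2)\to0 .
\]

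For the sharper claim $\mathbb P(\widehat a_N\in\mathcal U_L^*)\to1$, I will mirror the proof of Corollary~\ref{cor:empirical-leader}. If every master-grid action is optimal there is nothing to show. Otherwise set $\Delta_L:=\min_{a\in\mathcal U_L\setminus\mathcal U_L^*}\{f(a)-f_L^*\}>0$, which is positive because $\mathcal U_L$ is finite. On $F_{L,N}\cap\{e_{L,N}<\Delta_L/2\}$, the active set contains some $a^*\in\mathcal U_L^*$, and every nonoptimal $a\in A_{L,N}$ satisfies
\[
\widehat f_{L,N}(a)>f_L^*+\Delta_L/2>\widehat f_{L,N}(a^*).
\]
So no nonoptimal action can be the empirical argmin, and the complement of this event has vanishing probability. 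Finally, Corollary~\ref{cor:stagewise-asymptotic} supplies the stagewise hypothesis.

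The main point to check is that the pathwise inclusions are valid without invoking (S1)--(S5). This matters because the corollary assumes only convergence in probability. I will verify that the inclusion $F_{\ell,N}\subseteq\{A_{\ell,N}\in\mathfrak A_\ell^+\}$ and the margin comparison are purely deterministic statements about rankings. The comparison uses that the $k$th empirical nonguardian order statistic is at least $\nu_{\ell,k}-e_{\ell,N}$. Automatic stages contribute nothing, and the argument involves no conditioning across stages.
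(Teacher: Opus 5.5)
Your proposal is correct and follows essentially the same route as the paper. You use the deterministic first-loss bound \eqref{eq:guardian-loss-union-bound} with each summand controlled by $\mathbb P(e_{\ell,N}\ge\underline\Delta_\ell/2)\to0$, then $f(\widehat a_N)-f_L^*\le 2e_{L,N}$ on $F_{L,N}$, then the positive finite-grid gap for identification; the only cosmetic difference is that the paper deduces identification from value convergence plus discreteness of the gap, whereas you rerun the empirical-leader argument on $F_{L,N}\cap\{e_{L,N}<\Delta_L/2\}$, which amounts to the same thing.
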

\begin{proof}
The first-loss bound \eqref{eq:guardian-loss-union-bound} tends to zero.
On $F_{L,N}$ the proof above gives $f(\widehat a_N)-f_L^*\le2e_{L,N}$.
If any master-grid actions are suboptimal, finiteness of the grid implies
that their smallest positive gap is strictly positive, so convergence of
the gap to zero implies optimizer identification. If all actions are optimal,
identification is immediate.
\end{proof}

\begin{proposition}[Top-$K$ versus Top-1 at a common active set]
\label{prop:topk-margin-comparison}
Fix $\ell<L$ and an active set $A$ containing a guardian. For
$1\le K\le K'\le|A|$,
\begin{equation}
\Delta_{\ell,K'}(A)\ge\Delta_{\ell,K}(A),
\label{eq:margin-monotonicity}
\end{equation}
with the convention $\nu_{\ell,K}=+\infty$ if fewer than $K$ nonguardians
exist. When both order statistics are finite, the increase equals
$\nu_{\ell,K'}(A)-\nu_{\ell,K}(A)$ and is strict if these values differ.
For the same vector of estimates and the same tie-breaking rule, empirical
top-$K$ sets are nested as $K$ increases. Thus the event of retaining a
guardian is also nested, for every realization of the estimates.

Suppose $\Delta_{\ell,1}(A)<0<\Delta_{\ell,K}(A)$. For estimates on this
fixed set let $e_N(A)=\max_{a\in A}|\widehat f_N(a)-f(a)|$ and
$\gamma=-\Delta_{\ell,1}(A)>0$. If $e_N(A)<\gamma/2$, Top-1 retains no
guardian. If $e_N(A)<\Delta_{\ell,K}(A)/2$, Top-$K$ retains a guardian
(automatically if its margin is infinite). Therefore, when $e_N(A)\to0$
in probability, Top-1 loses all guardians with probability tending to one,
while Top-$K$ keeps one with probability tending to one.
\end{proposition}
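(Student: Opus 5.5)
The proof will follow the structure of the statement: a deterministic margin comparison, a pathwise nesting argument, and then two one-sided perturbation bounds that reuse the mechanism from the proof of Theorem~\ref{thm:topk-guardian-retention}.

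\emph{Margin monotonicity.} Fix $A$ and $G=\operatorname{Gd}_\ell(A)\neq\varnothing$. The guardian value $f_{\ell,G}(A)$ does not depend on $K$. The action-indexed order statistics of $(f(a))_{a\in A\setminus G}$ are nondecreasing in the index, with the convention that the value is $+\infty$ once the index exceeds $|A\setminus G|$. Hence $\nu_{\ell,K'}(A)\ge\nu_{\ell,K}(A)$, and \eqref{eq:margin-monotonicity} follows by subtracting the common term. When both order statistics are finite, the difference of the margins is literally $\nu_{\ell,K'}(A)-\nu_{\ell,K}(A)$, so the increase is strict exactly when these values differ.

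\emph{Pathwise nesting.} For a fixed estimate vector and a fixed tie-breaking rule, ranking gives a total order on $A$. The empirical top-$K$ set is the initial segment of length $K$ of this order. Initial segments are nested in $K$, so any guardian in the top-$K$ set also lies in the top-$K'$ set. This holds realization by realization.

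\emph{Top-1 failure.} Assume $\Delta_{\ell,1}(A)=-\gamma<0$. Then $\nu_{\ell,1}(A)$ is finite, so some nonguardian $n^*$ satisfies $f(n^*)=f_{\ell,G}(A)-\gamma$. Suppose $e_N(A)<\gamma/2$. For every guardian $g$,
\[
\widehat f_N(g)>f(g)-\gamma/2\ge f_{\ell,G}(A)-\gamma/2>\widehat f_N(n^*).
\]
The inequality is strict, so tie-breaking is irrelevant. Every guardian is therefore strictly beaten by $n^*$, and the empirical minimizer is not a guardian.

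\emph{Top-$K$ success.} If the margin $\Delta_{\ell,K}(A)$ is infinite, then fewer than $K$ nonguardians exist. Since $K\le|A|$, any $K$-set must then contain a guardian. If the margin is finite, I would repeat the argument from the proof of Theorem~\ref{thm:topk-guardian-retention}. Take $g^*$ attaining $f_{\ell,G}(A)$. Then $\widehat f_N(g^*)<f_{\ell,G}(A)+\Delta/2$. At most $K-1$ nonguardians have true value below $\nu_{\ell,K}$, and every other nonguardian has estimate greater than $\nu_{\ell,K}-\Delta/2=f_{\ell,G}(A)+\Delta/2$. So fewer than $K$ actions can rank ahead of $g^*$, and $g^*$ lies in the top-$K$ set.

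\emph{Limit statement.} Set $\delta=\min\{\gamma,\Delta_{\ell,K}(A)\}/2>0$, and use $\gamma/2$ alone when the margin is infinite. Both conclusions hold on $\{e_N(A)<\delta\}$, and convergence in probability gives $\mathbb P(e_N(A)<\delta)\to1$.

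The main obstacle I expect is handling ties in the order statistic. Nonguardians whose values equal $\nu_{\ell,K}$ must be bounded correctly. The counting has to use a strict true-value threshold: count the nonguardians with $f<\nu_{\ell,K}$, of which there are at most $K-1$, and show that every other action is empirically strictly worse than $g^*$.
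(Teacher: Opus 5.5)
Your route is the paper's. Monotonicity holds because the guardian value is fixed while the nonguardian order statistics increase. Nesting holds because every top-$K$ set is an initial segment of one tie-broken order. Top-1 fails because a minimum-cost nonguardian strictly beats every guardian. Top-$K$ succeeds by the counting in Theorem~\ref{thm:topk-guardian-retention}. The monotonicity, nesting, Top-1 and limit steps are correct. Your handling of ties among nonguardians is also right: you count those with $f<\nu_{\ell,K}$ and show the rest are strictly behind $g^*$.

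The Top-$K$ step, however, overclaims. Your counting shows only that fewer than $K$ \emph{nonguardians} can rank ahead of $g^*$. It says nothing about other guardians, so it does not give ``fewer than $K$ actions rank ahead of $g^*$,'' and $g^*$ itself need not be retained.

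For example, take $K=2$ and two nonguardians with costs $-1$ and $10$, both estimated exactly. Let $g^*$ have cost $0$ and estimate $0.3$, and let a second guardian have cost $0.01$ and estimate $0.02$. Then $\Delta_{\ell,1}(A)=-1<0<10=\Delta_{\ell,2}(A)$ and $e_N(A)=0.3<5$. Yet the empirical top-$2$ set is the cost-$(-1)$ nonguardian together with the second guardian, not $g^*$. The same objection applies to your closing remark that ``every other action'' is empirically worse than $g^*$.

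The fix is one line, and it is how the paper phrases it ($k$ nonguardians cannot all rank ahead of $g^*$). If $g^*$ is not in the empirical top-$K$ set, all $K$ retained actions rank ahead of $g^*$. At most $K-1$ of them are nonguardians, so at least one retained action is a guardian, which is all the proposition asserts.
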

\begin{proof}
Nonguardian order statistics are nondecreasing in $K$, whereas the best
guardian value is unchanged. This proves the margin ordering without
subtracting infinite values. A fixed ordering of the estimated costs proves
the nesting assertion. For the last claim, choose a minimum-cost
nonguardian $a^\circ$. Its true cost is $\gamma$ below the best guardian,
so $e_N(A)<\gamma/2$ implies
$\widehat f_N(a^\circ)<\widehat f_N(g)$ for every guardian $g$.
Hence an empirical minimizer is a nonguardian; uniqueness of $a^\circ$
is unnecessary. The Top-$K$ claim follows from the deterministic retention
argument in Theorem~\ref{thm:topk-guardian-retention}.
\end{proof}

\begin{corollary}[Sufficient per-action sample sizes]
\label{cor:guardian-sample-size}
At a nonautomatic stage under Theorem~\ref{thm:topk-guardian-retention},
suppose $r_{\ell,N}\le R_\ell m_{\ell,N}$,
$j_{\ell,N}\le j_\ell$, $\Lambda_{\ell,N}\le\Lambda_\ell$, and
$\eta_{\ell,N}(c_\ell)\le|\mathcal U|\kappa_\ell^{m_{\ell,N}}$
for some $0<\kappa_\ell<1$. For $0<\delta<1$, it suffices to take an
integer $m_{\ell,N}$ at least as large as each of
\begin{align}
&\frac{\log(3|\mathcal U|/\delta)}{-\log\kappa_\ell},
\qquad
\frac{48\kappa_U\bar v R_\ell}
{\delta c_\ell^2\underline\Delta_\ell^2},
\notag\\
&\frac{8j_\ell}{c_\ell\underline\Delta_\ell}
\left(\frac{3|\mathcal U|\Lambda_\ell\kappa_g\bar v}{\delta}\right)^{1/q}
\label{eq:guardian-sample-size}
\end{align}
to ensure $b_{\ell,N}\le\delta$. Applying this with tolerances
$\delta_\ell$ at the nonautomatic stages bounds total guardian loss by
$\sum_\ell\delta_\ell$.
\end{corollary}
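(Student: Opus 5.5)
The plan is to split $b_{\ell,N}=B_{\ell,N}(\underline\Delta_\ell/2;c_\ell)$ into its three summands from \eqref{eq:stage-bound-definition} and make each at most $\delta/3$. Set $\varepsilon=\underline\Delta_\ell/2$, so that $\varepsilon c_\ell m_{\ell,N}=c_\ell\underline\Delta_\ell m_{\ell,N}/2$. Throughout, replace $r_{\ell,N}$, $j_{\ell,N}$, $\Lambda_{\ell,N}$ by their upper bounds $R_\ell m_{\ell,N}$, $j_\ell$, $\Lambda_\ell$. This substitution is legitimate because each term of $B_{\ell,N}$ is nondecreasing in these quantities. The three requirements can then be solved separately for $m_{\ell,N}$, and taking the maximum of the three thresholds gives the claim.

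\emph{Duration term.} By hypothesis $\eta_{\ell,N}(c_\ell)\le|\mathcal U|\kappa_\ell^{m_{\ell,N}}$. This is at most $\delta/3$ if and only if
\[
m_{\ell,N}\log\kappa_\ell\le\log\{\delta/(3|\mathcal U|)\},
\]
which is equivalent to $m_{\ell,N}\ge \log(3|\mathcal U|/\delta)/(-\log\kappa_\ell)$ because $\log\kappa_\ell<0$. This gives the first expression.

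\emph{Innovation term.} With $\varepsilon=\underline\Delta_\ell/2$, the innovation term is
\[
\frac{4\kappa_U\bar v r_{\ell,N}}{(\varepsilon c_\ell m_{\ell,N})^2}
\le\frac{16\kappa_U\bar v R_\ell}{c_\ell^2\underline\Delta_\ell^2 m_{\ell,N}}.
\]
Requiring this to be at most $\delta/3$ gives $m_{\ell,N}\ge 48\kappa_U\bar vR_\ell/(\delta c_\ell^2\underline\Delta_\ell^2)$, the second expression. The only point to note is that $r_{\ell,N}$ is linear in $m_{\ell,N}$, so one power of $m$ cancels.

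\emph{Boundary term.} Here $4j/(\varepsilon c m)=8j_\ell/(c_\ell\underline\Delta_\ell m_{\ell,N})$, so the term is at most
\[
|\mathcal U|\Lambda_\ell\kappa_g\bar v\,\{8j_\ell/(c_\ell\underline\Delta_\ell m_{\ell,N})\}^q.
\]
Setting this $\le\delta/3$ and taking $q$th roots, which is valid since $q\ge1$ and both sides are positive, gives $m_{\ell,N}\ge (8j_\ell/(c_\ell\underline\Delta_\ell))(3|\mathcal U|\Lambda_\ell\kappa_g\bar v/\delta)^{1/q}$, the third expression.

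Summing the three bounds gives $b_{\ell,N}\le\delta$. The last sentence of the statement follows by choosing the per-stage tolerance $\delta_\ell$ at each nonautomatic stage and inserting the resulting bounds into \eqref{eq:guardian-loss-finite-bound} of Theorem~\ref{thm:topk-guardian-retention}; automatic stages contribute zero.

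None of these steps is a genuine obstacle. The only care needed is bookkeeping: the constant $48=16\cdot3$ comes from squaring the factor $2$ in $\varepsilon=\underline\Delta_\ell/2$, and each term must be monotone in the bounded parameters before the upper bounds are substituted.
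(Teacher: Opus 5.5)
Your proposal is correct and follows essentially the same route as the paper's proof. Like the paper, you substitute $\varepsilon=\underline\Delta_\ell/2$ and $r_{\ell,N}\le R_\ell m_{\ell,N}$ into the three terms of $B_{\ell,N}$ and force each term to be at most $\delta/3$; your explicit remark that each term is nondecreasing in $r_{\ell,N}$, $j_{\ell,N}$, and $\Lambda_{\ell,N}$ fills in a step the paper leaves implicit.
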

\begin{proof}
Substitute $\varepsilon=\underline\Delta_\ell/2$ into
\eqref{eq:stage-bound-definition} and use $r_{\ell,N}\le R_\ell m_{\ell,N}$.
The three requirements make the duration, innovation, and boundary terms
at most $\delta/3$ each.
\end{proof}

\begin{remark}[What the comparison establishes]
\label{rem:why-topk}
Top-$K$ can tolerate up to $K-1$ nonguardians with costs below the best
guardian. At a common active set, when both margins are positive and finite
and the innovation requirement in \eqref{eq:guardian-sample-size} dominates,
the ratio of that sufficient sample-size term for Top-$K$ to that for Top-1
is $(\Delta_{\ell,1}(A)/\Delta_{\ell,K}(A))^2$, holding the other constants
fixed. This is a statement about one term in a sufficient bound, not an
exact sample-complexity ratio. If $\Delta_{\ell,1}(A)=0$, ties can favor a
guardian, so Top-1 failure cannot be concluded from a nonpositive margin
alone.

Different retention schedules generate different future active-set families;
their minimum margins need not be ordered. More retained parents also require
more later sampling. Proposition~\ref{prop:topk-margin-comparison} therefore
does not imply that a complete Top-$K$ run outperforms Top-1 at every equal
total budget. Moreover, the margins and moment constants in
\eqref{eq:guardian-sample-size} are generally unknown: the formula explains
theoretical dependence on accuracy and separation, rather than providing a
calibrated data-only tuning rule.
\end{remark}

\begin{remark}[Scope of a guardian audit]
\label{rem:guardian-audit-scope}
A positive margin along the exact-objective refinement path verifies only
those active sets visited on that path. A noisy ranking can retain a
different collection of parents while still keeping a guardian. Theorem~
\ref{thm:topk-guardian-retention} requires positive margins throughout
\eqref{eq:guardian-active-family}, or a separately justified smaller family
containing every possible guardian-preserving path. Exhaustive enumeration
or a structural argument can establish this condition; one path alone cannot.
For an exhaustive audit, first specify the child maps and identify the full
master-grid optimal set. Compute guardians backwards through the maps, then
generate \eqref{eq:guardian-active-family} forwards, using every retained
$k_\ell(A)$-subset that intersects the guardian set. Deduplicate active sets,
but not equal costs of different actions. At each pruning level report the
number of active sets checked, any automatic cases, and the minimum of the
remaining margins. Terminal active sets need not be enumerated for margin
verification: once the last retained set contains a guardian, the definition
ensures that its children contain a master-grid optimizer.

When objective values are approximate, identifying the optimal descendants
and certifying the sign of each margin require error control. In particular,
a positive margin computed from simulation-based reference values alone is
not a certificate for the true objective. Section~\ref{subsec:liquar-comparison}
provides a full-family audit for the exact-formula $M/M/1$ benchmark.
The retention proof itself needs neither convexity nor differentiability,
and allows overlapping descendant sets and multiple master-grid optimizers.
\end{remark}

\begin{remark}[Continuous-space approximation]
\label{rem:topk-continuous-approximation}
Suppose $\mathcal U_L$ discretizes an underlying compact decision space $B$
and every $x\in B$ lies within distance $\delta_L$ of some point of
$\mathcal U_L$.  If $f$ is uniformly continuous on $B$, define its modulus of
continuity by
\[
\omega_f(\delta)
:=
\sup\{|f(x)-f(y)|:\ x,y\in B,\ \|x-y\|\leq\delta\}.
\]
For $x^*\in\operatorname*{argmin}_{x\in B}f(x)$,
\[
0\leq f_L^*-f(x^*)\leq\omega_f(\delta_L).
\]
Hence, on $F_{L,N}$,
\[
f(\widehat a_N)-f(x^*)
\leq
\omega_f(\delta_L)+2e_{L,N}.
\]
If $f$ is $L_f$-Lipschitz, then
$\omega_f(\delta_L)\leq L_f\delta_L$, recovering the usual Lipschitz bound.
Thus even the continuous-space approximation does not require convexity.  For
a fixed hierarchy the result gives consistency up to deterministic grid error.
If the hierarchy itself is refined with $N$, both the grid radius and guardian
margins may vary with $N$; continuous-space consistency additionally requires
stagewise estimation errors to vanish relative to the corresponding shrinking
guardian margins.
\end{remark}


\subsection{A complete verification for an M/M/1 workload model}
\label{subsec:mm1-workload-verification}

This subsection closes the theory--implementation loop for one concrete
queueing model. We verify the queueing and sampling hypotheses of
Lemma~\ref{lem:stagewise-blocked-saa} for the specified persistent-workload
version of the single-server model used in Section~
\ref{subsec:liquar-comparison}. The result is deliberately model-specific: it
shows that the preceding conditions are verifiable for an implemented queue,
but it is not invoked as automatic verification for the staffing or
heavy-tailed experiments.
The observation is integrated normalized workload. Normalized workload
and queue length have the same stationary mean in this $M/M/1$ model,
but their sample-path integrals differ.

Let $\mathcal U$ be the finite union of the candidate grids and
$a=(\mu,p)\in\mathcal U$. Assume
\begin{equation}
0<\underline\lambda\le\lambda(p)\le\overline\lambda
<\underline\mu\le\mu\le\overline\mu<\infty,
\qquad P:=\max_{a\in\mathcal U}|p|<\infty.
\label{ass:mm1-rate-bounds}
\end{equation}
External arrivals have rate $\lambda(p)$ while action $a$ is held fixed.
Each arrival brings an independent work requirement
$\zeta\sim\operatorname{Exp}(1)$. Total unfinished work $W(s)$ decreases
at speed $\mu$ when positive and increases by $\zeta$ at an arrival.
Thus under a fixed action the calendar service times are exponential
with rate $\mu$. When capacity changes, the existing unfinished work is
preserved and processed at the new speed; it is neither resampled nor
rescaled.

Each learning cycle ends immediately after its $D$th external arrival,
including that arrival's work in the terminal state $X_t$ and charging
its revenue to the action used during that cycle. The next action is
chosen after this state has been observed. Future arrival randomness and
work requirements are independent of the available history conditional
on the selected action. The learner observes the workload and realized
cost; future marks are not revealed in advance. This gives a complete
state for the workload dynamics and costs, even though individual job
identities are not retained. For one cycle, started from $w$ and run
under $a$, define
\begin{equation}
G=\int_0^T W(s)\,ds+\mu T-pD,
\qquad
f(a)=\frac{\lambda(p)}{\mu-\lambda(p)}+\mu-p\lambda(p).
\label{eq:mm1-workload-cost}
\end{equation}
The first term of $f$ is the stationary mean workload in work units.

\begin{proposition}[Model-level verification for blocked Top-$K$ SAA]
\label{prop:mm1-complete-verification}
Under the model and rate bounds above, fix
\[
0<\theta<1-\overline\lambda/\underline\mu,
\qquad \mathcal V(w):=e^{\theta w},
\]
and suppose
$\sup_N\mathbb E\mathcal V(X_0^{(N)})\le v_0<\infty$.
Empty initialization is one admissible choice. There are common
constants $\rho<1$, $b<\infty$, $C_Y<\infty$, and $C_U<\infty$ such
that, conditional on every inherited workload $w$ and selected action,
\begin{align}
\mathbb E_w^a\mathcal V(W(T))&\le\rho\mathcal V(w)+b,
\label{eq:mm1-cycle-drift}\\
G-f(a)T&=g_a(w)-g_a(W(T))+U,
\quad g_a(w):=\frac{w^2}{2\{\mu-\lambda(p)\}},
\label{eq:mm1-cycle-poisson}\\
\mathbb E_w^a U&=0,\qquad
\mathbb E_w^a\{G-f(a)T\}^2\le C_Y(1+w^2),\qquad
\mathbb E_w^a U^2\le C_U(1+w^2).
\label{eq:mm1-cycle-second-moments}
\end{align}
In particular, (S3)--(S5) hold with $q=2$ along the actual adaptive
trajectory. For the fixed-depth blocked algorithm in
Section~\ref{subsec:practical-topk}, completed deterministic cumulative
targets verify (S1)--(S2), and the external-arrival cycle construction
supplies the duration bound below. If $m_{\ell,N}\to\infty$ for every
fixed stage, then (S6) holds and $e_{\ell,N}\to0$ in probability. This conclusion concerns estimation;
optimizer identification additionally requires the hierarchy conditions
of Theorem~\ref{thm:topk-guardian-retention}.
\end{proposition}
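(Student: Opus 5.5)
The plan is to work entirely under a frozen action $a=(\mu,p)$ from an inherited workload $w$, prove the four cycle-level statements \eqref{eq:mm1-cycle-drift}--\eqref{eq:mm1-cycle-second-moments} with constants that depend only on the bounds \eqref{ass:mm1-rate-bounds}, and then transfer them to the adaptive trajectory. The transfer uses the model assumption that, given $\mathcal H_{t-1}^{(N)}$, future arrivals and work marks are independent of the past. Hence the conditional cycle law equals the frozen-action law started at $X_{t-1}^{(N)}$ with action $a_t^{(N)}$, exactly as in the proof of Corollary~\ref{cor:common-lyapunov-regenerative}.

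\emph{Drift.} Rather than using a generator argument with a random horizon, I would use the embedded arrival chain. Let $w^-$ be the workload just before the next arrival, and let $E\sim\operatorname{Exp}(\lambda)$ be the time until that arrival. Then $e^{\theta w^-}\le e^{\theta w}e^{-\theta\mu E}+1$, since the workload either stays positive and drains linearly or is clipped at zero. Multiplying by the independent factor $e^{\theta\zeta}$, whose mean is $1/(1-\theta)$, gives one-arrival drift
\[
\mathbb E\, e^{\theta W}\le r\,e^{\theta w}+\frac{1}{1-\theta},\qquad r=\frac{\lambda}{(\lambda+\theta\mu)(1-\theta)}.
\]
Here $r<1$ is equivalent to $\theta<1-\lambda/\mu$. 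Because $\lambda/\mu\le\overline\lambda/\underline\mu$, the bound $r\le\bar r<1$ holds uniformly over $\mathcal U$. Iterating $D$ times, once per arrival in the cycle, gives \eqref{eq:mm1-cycle-drift} with $\rho=\bar r^{D}$ and $b=D/((1-\theta)(1-\bar r))$.

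\emph{Poisson identity.} The process is piecewise deterministic with generator $\mathcal L h(w)=-\mu\mathbf 1\{w>0\}h'(w)+\lambda\,\mathbb E[h(w+\zeta)-h(w)]$. For $g_a(w)=w^2/(2(\mu-\lambda))$, I compute $\mathcal L g_a(w)=-w+\lambda/(\mu-\lambda)$. This also holds at $w=0$, because there the drift term vanishes and so does $-w$. The running cost rate is $w+\mu$, and the revenue $-p$ per arrival has compensator $-p\lambda$. So the running rate minus $f(a)$ plus $\mathcal L g_a$ is identically zero.

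Dynkin's formula applied to $g_a$ at the stopping time $T$ then gives \eqref{eq:mm1-cycle-poisson}. In that formula $U$ is the sum of two pieces: the Dynkin martingale of $g_a$ stopped at $T$, and the compensated revenue martingale $-p(D-\lambda T)$. To get $\mathbb E U=0$ I need optional stopping to be valid at $T\sim\operatorname{Gamma}(D,\lambda)$. I would justify it by domination: $W(s)\le w+\sum_{i\le D}\zeta_i$ on $[0,T]$, and $T$ has exponential moments. This domination step is the main technical obstacle, together with handling the kink at $w=0$. The latter is harmless because $g_a'(0)=0$, so $g_a$ is $C^1$ and the Itô--Dynkin formula for finite-variation paths applies without an extra local-time term.

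\emph{Moments and assembly.} Write $\Sigma=\sum_{i\le D}\zeta_i$, and let $C$ denote a constant depending only on the rate bounds, $P$ and $D$. Then $|G-f(a)T|\le T(w+\Sigma)+C(T+1)$. Since $T$ and $\Sigma$ have all moments and are independent of $w$, this gives $\mathbb E(G-f(a)T)^2\le C_Y(1+w^2)$. For $U$ I would not bound $g_a(W(T))$ alone, because that produces $w^4$. Instead I factor
\[
g_a(w)-g_a(W(T))=\frac{(w-W(T))(w+W(T))}{2(\mu-\lambda)},
\]
with $|w-W(T)|\le\mu T+\Sigma$ and $w+W(T)\le 2w+\Sigma$. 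This yields $\mathbb E U^2\le C_U(1+w^2)$.

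The lemma's conditions then follow.
\begin{itemize}
\item (S3) follows from the drift and $\sup_N\mathbb E\mathcal V(X_0^{(N)})\le v_0$.
\item (S4) holds with $q=2$, since $g_a(w)^2\le Cw^4\le\kappa_g e^{\theta w}$.
\item (S5) holds because $1+w^2\le C e^{\theta w}$.
\item (S1)--(S2) follow from Remark~\ref{rem:triangular-block-simplification}.
\item The duration bound comes from Remark~\ref{rem:poisson-duration} with $\bar\lambda=\overline\lambda$, and Proposition~\ref{prop:stage-duration} then gives (S6) when $m_{\ell,N}\to\infty$.
\end{itemize}
Corollary~\ref{cor:stagewise-asymptotic} finally yields $e_{\ell,N}\to0$ in probability.
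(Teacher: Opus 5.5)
Your proposal is correct and follows the paper's proof essentially step for step. It uses the same embedded-arrival drift bound (your $\bar r$ is the paper's $r_\theta$) and the same generator identity for $g_a$, with $U$ equal to the stopped Dynkin martingale plus $-p(D-\lambda T)$; it also uses the same domination $W(s)\le w+\sum_{i\le D}\zeta_i$ and the same assembly through Remarks~\ref{rem:triangular-block-simplification} and~\ref{rem:poisson-duration}. The only local difference is in the second moment of $U$: you bound $\mathbb E_w^aU^2$ pathwise by factoring $g_a(w)-g_a(W(T))$, and you get centering from domination, whereas the paper applies the isometry of the compensated marked integral, whose integrand is linear in $W(s^-)$; both give the required $(1+w^2)$ growth.
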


\begin{proof}
Write $\lambda=\lambda(p)$, $d_a=\mu-\lambda$, and
$\delta=\underline\mu-\overline\lambda>0$.
We verify the conditions on the same workload trajectory.

\emph{Common drift.}
The next-arrival kernel $P_a$ has
$W'=(w-\mu A)^++\zeta$, where $A\sim\operatorname{Exp}(\lambda)$
and $\zeta\sim\operatorname{Exp}(1)$ are independent. Direct integration
therefore gives
\begin{equation}
P_a\mathcal V(w)
=\frac{\lambda e^{\theta w}+\theta\mu e^{-\lambda w/\mu}}
{(1-\theta)(\lambda+\theta\mu)}
\le r_\theta\mathcal V(w)+b_\theta,
\label{eq:mm1-arrival-drift}
\end{equation}
where
\[
r_\theta:=\frac{\overline\lambda}
{(1-\theta)(\overline\lambda+\theta\underline\mu)}<1,
\qquad b_\theta:=\frac1{1-\theta}.
\]
This also holds at $w=0$. Iterating for the $D$ arrivals gives
\eqref{eq:mm1-cycle-drift} with
\[
\rho=r_\theta^D,
\qquad b=b_\theta\frac{1-r_\theta^D}{1-r_\theta}.
\]
Since the current action is predictable and these constants are common,
\[
\mathbb E[\mathcal V(X_t^{(N)})\mid\mathcal H_{t-1}^{(N)}]
\le\rho\mathcal V(X_{t-1}^{(N)})+b,
\qquad
\sup_{N,t}\mathbb E\mathcal V(X_t^{(N)})
\le\bar v:=\max\{v_0,b/(1-\rho)\}.
\]
This verifies (S3) under adaptive switching, rather than only under a
frozen action.

\emph{Poisson representation and moment control.}
For the workload generator,
\[
\mathcal L_a g_a(w)
=-\mu\mathbf1\{w>0\}g_a'(w)
+\lambda\mathbb E[g_a(w+\zeta)-g_a(w)]
=-w+\frac{\lambda}{d_a}.
\]
The identity also holds at zero, since $g_a'(0)=0$.
Let $\Pi(ds,dz)$ be the marked arrival measure during this frozen-action
cycle, with compensator $\lambda\,ds\,e^{-z}dz$, and let
$\widetilde\Pi$ be its compensated version. The pathwise change-of-variable
formula, together with the compensated arrival revenue, gives
\begin{equation}
U=\int_{(0,T]}\int_0^\infty
\left\{\frac{W(s^-)z+z^2/2}{d_a}-p\right\}
\widetilde\Pi(ds,dz)
\label{eq:mm1-corrected-innovation}
\end{equation}
in \eqref{eq:mm1-cycle-poisson}. The term $-p$ accounts for the
fluctuation of $D-\lambda T$; it cannot be dropped merely because the
cycle contains exactly $D$ arrivals.

For completeness, the stopping and integrability in this calculation
can be checked directly. Put $H_D=\sum_{i=1}^{D}\zeta_i$. Up to $T$,
\[
0\le W(s)\le w+H_D,\qquad
T\sim\operatorname{Gamma}(D,\lambda),\qquad H_D\perp T.
\]
The stopping factor in the marked integral is
$\mathbf1\{N(s^-)<D\}$, where $N$ counts arrivals from the cycle origin;
it is predictable and includes the $D$th jump. For
$h_a(w,z)=(wz+z^2/2)/d_a-p$, exponential work moments give
\[
\mathbb E h_a(w,\zeta)^2
=\frac{2w^2+6w+6}{d_a^2}
-\frac{2p(w+1)}{d_a}+p^2
\le C_h(1+w^2)
\]
with a common finite $C_h$. Consequently,
\begin{align*}
\lambda\mathbb E_w^a\int_0^T
\mathbb E[h_a(W(s^-),\zeta)^2\mid W(s^-)]\,ds
&\le\overline\lambda C_h\,
\mathbb E[T\{1+(w+H_D)^2\}]\\
&\le C_U(1+w^2).
\end{align*}
Thus the stopped compensated integral is square integrable and centered;
its isometry proves the bound for $U$ and justifies the stopped identity.

With $Y=G-f(a)T$ and
$C_0=\overline\lambda/\delta+P\overline\lambda$, we also have
\[
|Y|\le T(w+H_D+C_0)+PD.
\]
The independence above, $\mathbb E H_D^2=D(D+1)$, and
$\mathbb E T^2\le D(D+1)/\underline\lambda^2$ imply
$\mathbb E_w^aY^2\le C_Y(1+w^2)$. For example, one may take
\[
C_Y=\frac{6D(D+1)}{\underline\lambda^2}
\{1+D(D+1)+C_0^2\}+2P^2D^2.
\]
Polynomial growth is dominated by $\mathcal V$: for every fixed $q\ge1$,
\[
|g_a(w)|^q\le\frac{w^{2q}}{(2\delta)^q}
\le\frac{1}{(2\delta)^q}
\left(\frac{2q}{e\theta}\right)^{2q}\mathcal V(w),
\qquad
1+w^2\le\left\{1+\left(\frac2{e\theta}\right)^2\right\}\mathcal V(w).
\]
Taking $q=2$ verifies (S4), and the bound for $U$ verifies (S5).
The conditional identities hold given the actual learning history because
its inherited workload and chosen action determine the next cycle law.

\emph{Completed stages and durations.}
Write $M=|\mathcal U|$. Under the blocked schedule, by stage $\ell$ each
action has at most $m_{\ell,N}$ cycles and at most $\ell+1$ blocks.
Thus (S1)--(S2) hold with $r_{\ell,N}=Mm_{\ell,N}$,
$j_{\ell,N}\le\ell+1$, and a deterministic boundary cover of size
$\Lambda_{\ell,N}\le\Lambda_\ell<\infty$, as established in
Remark~\ref{rem:triangular-block-simplification}.
With $c=D/(2\overline\lambda)$, Remark~\ref{rem:poisson-duration} gives
\[
\eta_{\ell,N}(c)
\le M\exp\{-D(\log2-1/2)m_{\ell,N}\}.
\]
Each prescribed stage completes almost surely because it contains at
most $Mm_{\ell,N}$ cycles, each with an almost surely finite Gamma
duration.
Substituting these verified bounds and $q=2$ into
Lemma~\ref{lem:stagewise-blocked-saa} yields
\begin{align}
\mathbb P(e_{\ell,N}\ge\varepsilon)
&\le M e^{-D(\log2-1/2)m_{\ell,N}}
+\frac{4\kappa_U\bar v M}{\varepsilon^2c^2m_{\ell,N}}
\notag\\
&\quad+M\Lambda_\ell\kappa_g\bar v
\left(\frac{4(\ell+1)}{\varepsilon c m_{\ell,N}}\right)^2,
\label{eq:mm1-verified-stage-bound}
\end{align}
which tends to zero when $m_{\ell,N}\to\infty$.
\end{proof}

The same calculation also controls the original innovation
$M_t=Y_t-\mathbb E[Y_t\mid\mathcal H_{t-1}]$:
\[
\mathbb E[M_t^2\mid\mathcal H_{t-1}]
\le C_Y(1+X_{t-1}^2).
\]
This is distinct from the corrected innovation $U_t$. The weighted
bound alone does not establish (A3): the almost-sure results additionally
require their sampling, boundary-growth, and martingale-summability
hypotheses. The finite-budget Top-$K$ conclusion above uses the
increasing-budget stagewise limit. A hard clock-time cutoff, imperfect workload observations,
or a different service-switching convention requires the corresponding
additional analysis. The common drift constants can be conservative when
$\underline\mu-\overline\lambda$ is small and are not practical budget
prescriptions.

The functions $f$, $g_a$, and $\mathcal V$, and the moment constants,
are used for verification; they are not inputs required by the learner.

\section{Numerical Experiments}
\label{sec:numerical}

The numerical study is designed to characterize both the advantages and the
limits of direct Top-$K$ rather than to establish universal dominance. We first
validate the event-driven implementation against queueing models with known
steady-state formulas. We then ask four substantive questions. How does direct
Top-$K$ compare with LiQUAR in terminal identification and cumulative learning
cost? Does Top-$K$ help when controls are mixed discrete--continuous? Does the
basic behavior persist in a non-Markovian heavy-tailed service model? Finally,
how much can a specialized gradient learner gain in a smooth regime where
reliable queueing sensitivity information is available?

Each benchmark has a distinct evidentiary role. The first separates terminal
identification from cumulative learning cost in a continuous control problem;
the second tests mixed discrete--continuous decisions; the third probes
robustness beyond the light-tailed model verified analytically; and the fourth
tests the method against a specialized gradient learner in a regime favorable
to that learner. The comparisons are therefore interpreted benchmark by
benchmark, with their information inputs, resource matching, and primary
performance criteria stated explicitly rather than pooled into a single
algorithm ranking.

All four experiments use the same logistic demand function
\[
\lambda(p)=10\frac{e^{4.1-p}}{1+e^{4.1-p}}.
\]
Under a fixed price $p$, external arrivals are Poisson with rate
$\lambda(p)$. The feasible action domains, service mechanisms, staffing
costs, and learning-cycle conventions are specified for each benchmark
below.

\paragraph{Interpretation of the retention schedules and audits.}
The comparisons below evaluate the specified fixed retention schedules. They
do not evaluate a method that learns a sufficient $K$ without access to
reference-objective information. Exact or high-precision reference values are
used to assess refinement geometry and terminal performance. For the
four-level $M/M/1$ benchmark in Section~\ref{subsec:liquar-comparison}, the
reported margins have been independently reproduced by an exhaustive audit
of \eqref{eq:guardian-active-family} using the implementation's child maps.
The five-level comparison and the explicitly documented shared staffing
hierarchy also admit full-family audits with exact-formula objective
bounds. Their model-level sampling and moment conditions require separate
verification. The lognormal family is exhaustively checked against saved
reference estimates. A simultaneous Monte Carlo-error sensitivity analysis is
reported below, but missing transient-bias and historical run metadata still
prevent treating that check as a certificate for the true objective. These audits assess fixed retention
schedules and are not inputs to the online SAA rankings.

\paragraph{Computational record.}
The accompanying computational-provenance packages index every table and
figure below. It separates \emph{full} records (source, inputs,
randomization, raw output, and aggregation are available), \emph{partial}
records (saved output is traceable but some historical metadata are missing),
and \emph{archival} records (only a historical summary or figure was
recovered). Missing metadata are not reconstructed silently. In particular,
the historical light-tailed staffing summary has been replaced by a newly
specified canonical experiment with frozen configuration, independently seeded
randomization and simulation streams, replication-level raw output, automated
validation, and exact replay checks. This is a replacement experiment, not a
reproduction of the unrecovered historical run. The final GOLiQ comparison is
likewise replaced by a corrected run with independently seeded simulation
streams. Section~\ref{subsec:computational-record}
summarizes the record and the benchmark-specific paragraphs give the detailed
design information.

\subsection{Implementation validation}
\label{subsec:validation}

We validated the simulators independently of the learning algorithms. For the
single-server benchmark, long fixed-action runs were compared with the exact
arrival rate, mean workload, and cost rate. Each fixed-$D$ cycle was checked
to contain exactly $D$ designated arrivals, workload remained nonnegative, and
the workload integral was evaluated analytically between event epochs.
For fixed-price arrival cycles, the mean and variance of the duration are
$D/\lambda(p)$ and $D/\lambda(p)^2$. At price $p=3.78551$ with $D=50$,
the theoretical mean cycle duration is
$8.65081$ and the simulated value is $8.66238$; the corresponding variances are
$1.49673$ and $1.52048$. Table~\ref{tab:validation} reports representative
cost-rate checks. We also switched the parallel-server simulator back to
exponential service and verified it against exact Erlang-C calculations before
using lognormal service.
For each fixed-$D$ $M/M/1$ row, we used 20 independent replications of
2,000 cycles, with $D=50$, an empty start, and no discarded warm-up; the
reported Monte Carlo standard error is computed across replication-level
cost-rate estimates. The four displayed actions use seed ranges
$10000$--$10019$, $11000$--$11019$, $14000$--$14019$, and
$15000$--$15019$, respectively. The separate duration check uses seed
$7654321$ on one persistent 5,000-cycle path. The $M/M/5$ row is an archived
validation result: its original horizon, replication count, and seed rule
were not recovered.

\begin{table}[ht]
\centering
\small
\caption{Representative fixed-action simulator checks. The reported standard
error refers to the simulated cost rate.}
\label{tab:validation}
\begin{tabular}{llrrr}
\toprule
Model & Action & Exact cost & Simulated cost & MC SE \\
\midrule
$M/M/1$ & $(8.18393,3.78551)$ & -11.2915 & -11.2774 & 0.0131 \\
$M/M/1$ & $(7,4.1)$             & -11.0000 & -11.0183 & 0.0116 \\
$M/M/1$ & $(6.5,7)$             &   2.9365 &   2.9367 & 0.0023 \\
$M/M/1$ & $(6.5,3.7)$           &  -3.9839 &  -4.0333 & 0.2014 \\
$M/M/5$ & $(s,p)=(5,3.75)$      & -12.9702 & -12.9757 & 0.0087 \\
\bottomrule
\end{tabular}
\end{table}

The optimization layer was checked separately by replacing all stochastic SAA
values with exact or high-precision reference objective values. This
reproduces the reported master-grid optima and guardian-retention paths, so the
queue simulator and the hierarchical search can be audited independently.

For the three benchmarks with analytic steady-state objectives,
Table~\ref{tab:analytic-benchmarks} summarizes the full-domain optimum
$f^*$ and the minimum $f_L^*$ on the stated master grid. The optima were
recomputed from the objective formulas, using Erlang C for the staffing
model, and by enumeration of each master grid. The full-domain staffing
optimum retains integer $s\in\{4,\ldots,8\}$ and optimizes only price
continuously. Displayed values are rounded numerical evaluations of
the analytic objectives.

\begin{table}[ht]
\centering
\footnotesize
\setlength{\tabcolsep}{4pt}
\caption{Analytic benchmark optima and master-grid discretization gaps.
Coordinates are capacity and price for the single-server models, and
integer staffing and price for the multiserver model.}
\label{tab:analytic-benchmarks}
\begin{tabular}{@{}lccrr@{}}
\toprule
Benchmark & \shortstack{Full-domain\\optimizer}
& \shortstack{Master-grid\\optimizer} & $f^*$ & $f_L^*-f^*$ \\
\midrule
\shortstack[l]{Linear-cost\\$M/M/1$}
& $(8.183928,3.785511)$ & $(8.46875,3.71875)$
& $-11.29146766$ & $0.01565145$ \\
\addlinespace
\shortstack[l]{Integer-staffing\\$M/M/s$}
& $(5,3.734660)$ & $(5,3.71875)$
& $-12.97142146$ & $0.00128977$ \\
\addlinespace
\shortstack[l]{Quadratic-cost\\$M/M/1$}
& $(7.103113,4.023373)$ & $(7.03125,4.05625)$
& $-13.12609570$ & $0.00361515$ \\
\bottomrule
\end{tabular}
\end{table}

For a Top-$K$ recommendation on the master grid, the reported terminal
gap separates as
\[
f(\widehat a)-f^*
=\bigl[f(\widehat a)-f_L^*\bigr]+\bigl[f_L^*-f^*\bigr].
\]
The first term is the learning error relative to the grid; the second is
the deterministic discretization gap, which persists even after correct
master-grid identification. This decomposition does not impose the same
grid restriction on a continuously valued comparator.

\paragraph{Costs and information available to the learners.}
Table~\ref{tab:learner-observations} distinguishes the statistics used
to update each learner from the simulator's full state and evaluation
records. In the workload benchmark, $W(u)$ denotes unfinished work in
mean-one work units, rather than time to clear the system. In the
customer-count benchmarks, $Q(u)$ includes customers in service.
At a fixed stable action in the exponential single-server model,
\[
\mathbb E[W_\infty]=\mathbb E[Q_\infty]
=\frac{\lambda(p)}{\mu-\lambda(p)}.
\]
This equality of stationary means does not identify their realized
integrals or make their observation requirements interchangeable.

\begin{table}[ht]
\centering
\small
\setlength{\tabcolsep}{4pt}
\caption{Observation inputs used for learning in the reported comparisons.
Applied controls and elapsed time are known in every row.}
\label{tab:learner-observations}
\begin{tabular}{@{}>{\raggedright\arraybackslash}p{0.21\textwidth}
>{\raggedright\arraybackslash}p{0.48\textwidth}
>{\raggedright\arraybackslash}p{0.25\textwidth}@{}}
\toprule
Method & Observations used & Update statistic \\
\midrule
Top-$K$, workload benchmark
& Exact workload integral and $D$ observed arrivals per cycle.
& Cumulative realized cost divided by cumulative time. \\
\addlinespace
LiQUAR-oracleW
& Exact workload integral on the central time window; arrival count over the whole period.
& Two-point finite differences of trimmed cost-rate estimates. \\
\addlinespace
LiQUAR, delayed mode
& Workload statistic recoverable by period end, on the same central window; whole-period arrivals.
& Same finite-difference rule, with censored workload data. \\
\addlinespace
Staffing SAA methods
& Time integrals of total customer count and charged staffing $\max\{s,B(u)\}$; arrivals.
& Cumulative realized cost divided by cumulative time. \\
\addlinespace
Top-$K$, five-level comparison
& Customer-count integral and actual arrivals during $D$ service entries.
& Cumulative realized cost divided by cumulative time. \\
\addlinespace
GOLiQ
& Waiting times observed at service entry and busy-period ages recorded at arrival; known $\lambda(p)$, $\lambda'(p)$, and $c'(\mu)$.
& Queueing-sensitivity gradient from retained customer observations. \\
\bottomrule
\end{tabular}
\end{table}

Direct SAA uses the resulting cycle costs and durations; it does not
require latent service requirements or the analytic demand function
as ranking inputs. Nevertheless, supplying the exact workload integral
is an observation assumption: the workload experiment does not obtain
it from completion records alone. By contrast, the staffing reward
can be accumulated from customer counts, busy-server counts, arrivals,
and time without knowing unfinished service requirements.
The staffing row describes the documented shared model implemented by the
canonical light-tailed driver and the recovered lognormal implementation.
Evaluation formulas for $f$ and $f^*$ are not learning inputs for the
direct-SAA methods.

\paragraph{Learning-cost metrics and budget accounting.}
Let $H$ denote the completed observation horizon of one replication,
$C(H)$ its realized cumulative cost, $a(u)$ the action label in force at time
$u$, and $f^*$ the continuous-domain optimum for the benchmark. We distinguish
\begin{equation}
R^{\mathrm{real}}(H)=C(H)-f^*H,
\qquad
R^{\mathrm{ps}}(H)=\int_0^H\{f(a(u))-f^*\}\,du.
\label{eq:numerical-regret-definitions}
\end{equation}
The first is \emph{realized regret}: it includes inherited-state transients and
sample noise and can be negative on an individual path. The second is
\emph{stationary-policy pseudo-regret}: it replaces realized cost on each
operating interval by the stationary objective of its action label and excludes
those transient and sampling effects. The corresponding rates divide by $H$.
Neither quantity is the terminal gap $f(\widehat a)-f^*$, which evaluates only
the final recommendation. These evaluation quantities use $f$ and $f^*$
offline; they are not learner inputs.

Every completed Top-$K$ primitive cycle is charged in $C(H)$ and $H$, and the
reported runs finish their deterministic stage targets rather than stopping at
a hard clock cutoff. LiQUAR completes both full clock periods in every
iteration. Its beginning and ending portions omitted from the workload update
remain fully charged in realized cost and time. Thus Table~\ref{tab:liquar}
matches mean clock time only approximately: LiQUAR time is deterministic for a
given iteration schedule, while Top-$K$ time is random. For the GOLiQ
comparison, service entries are matched exactly within pairs; if the matched
event occurs inside a GOLiQ learning cycle, cost and time through that event are
charged but the incomplete cycle produces no gradient update. The three
LiQUAR sensitivity settings are reported separately; no best setting is
selected and no outer-loop selection cost is represented.

\subsection{LiQUAR benchmark: terminal identification versus learning cost}
\label{subsec:liquar-comparison}

This benchmark asks whether a direct ranking hierarchy can improve terminal
policy identification without obscuring the cumulative operating cost paid
during learning. We first consider the $M/M/1$ logistic benchmark of Chen,
Hong, and Liu
\cite{ChenHongLiu2026}. Here the staffing cost is linear, and
\[
f(\mu,p)=\frac{\lambda(p)}{\mu-\lambda(p)}+\mu-p\lambda(p),
\qquad
(\mu,p)\in\mathcal B:=[6.5,10]\times[3.5,7].
\]
On this rectangle,
\[
0.52153<\lambda(p)<6.45657<6.5\le\mu,
\]
so every candidate action satisfies the rate bounds of
Proposition~\ref{prop:mm1-complete-verification}. Under that proposition's
normalized-work, switching, observation, and initialization conventions,
its model-level estimation guarantee applies to the blocked Top-$K$
procedure. Guardian separation is verified below for its stated hierarchy. This
specialization concerns the stated workload model, not a verification
of the LiQUAR perturbation scheme.

For clarity, with $D=50$ external arrivals per cycle, the Top-$K$ reward is
\[
G_t=\int_{\text{cycle }t}W(u)\,du+\mu_tT_t-p_tD,
\]
where $W$ is normalized unfinished work with mean-one work requirements.
A cycle ends after the $D$th external arrival, including its work and
revenue. Inherited work is preserved and drains at the newly selected
rate $\mu_t$; subsequent arrivals use the newly selected price $p_t$.
The simulated workload integral is observed exactly here.

The exact continuous optimum is
\[
(\mu^*,p^*)\approx(8.18393,3.78551),
\qquad f^*\approx-11.29147.
\]
For Top-$K$, the hierarchy
\[
3\times3\to5\times5\to9\times9\to17\times17
\]
has master-grid optimum $(8.46875,3.71875)$ and deterministic grid gap
$0.01565$. We use $K=(1,2,4)$.
The archived specifications give cumulative per-action stage targets
$(30,80,110,135)$, $(94,251,345,423)$, and
$(236,630,867,1064)$ for the low, medium, and large budgets.
The saved Top-$K$ raw rows and summaries are available, but their original
experiment wrapper and row-level seed formula were not recovered; those rows
therefore have partial rather than full computational provenance. The
corrected LiQUAR side has complete source, seed lists, raw output, and probe
traces in the N3 package.

\paragraph{Child maps and full-family guardian audit.}
The archived four-level policy specifies equally spaced coordinate grids
with $n_\ell=2^{\ell+1}+1$ points and parent mesh
$h_\ell=3.5/(n_\ell-1)$. Its child rule is
\begin{equation}
\operatorname{Ch}_\ell(\mu,p)
=\left\{(\mu',p')\in\mathcal U_{\ell+1}:
|\mu'-\mu|\le h_\ell/2,\quad |p'-p|\le h_\ell/2\right\}.
\label{eq:mm1-child-map}
\end{equation}
The implementation adds a $10^{-12}$ comparison tolerance. All grid
coordinates and half-mesh boundaries here are binary rationals, and an
excluded coordinate exceeds its boundary by at least $0.21875$, so this
tolerance leaves the displayed child sets unchanged. Midpoint children may
have more than one parent; unions remove duplicate actions. Rankings break
ties lexicographically in $(\mu,p)$. Each level covers the next one, and
all 289 master-grid actions are reachable from the initial grid.

Using these maps, we identify the unique master-grid optimizer, compute
guardians backwards, and enumerate every guardian-preserving active set at
the pruning levels. Table~\ref{tab:mm1-guardian-audit} gives the results.
There are four guardians in each full candidate grid at these levels;
the active sets need not contain all four. Every enumerated case is
nonautomatic. Objective values and margin signs were checked with rational
interval bounds on the exponential in the logistic formula, including a
strict interval separation of the master-grid optimum. The displayed lower
bounds are conservative truncations of the certified bounds.

\begin{table}[ht]
\centering
\small
\caption{Full-family guardian audit for the four-level $M/M/1$ hierarchy.
The family is defined in \eqref{eq:guardian-active-family}.}
\label{tab:mm1-guardian-audit}
\begin{tabular}{crrrr}
\toprule
$\ell$ & $|\mathcal U_\ell|$ & $K_\ell$ & $|\mathfrak A_\ell^+|$
& Certified lower bound on $\underline\Delta_\ell$ \\
\midrule
0 & 9  & 1 & 1  & 5.23743432 \\
1 & 25 & 2 & 4  & 2.86034218 \\
2 & 81 & 4 & 32 & 0.05967732 \\
\bottomrule
\end{tabular}
\end{table}

The recomputed minima, approximately $5.23743433$, $2.86034218$, and
$0.05967732$, reproduce the saved audit values. They are full-family
minima, not margins from a single exact-objective path. In particular,
the exact Top-$K$ path has larger margins at levels 1 and 2,
approximately $2.92693723$ and $0.31096158$.
Together with Proposition~\ref{prop:mm1-complete-verification}, this verifies
the hypotheses of Theorem~\ref{thm:topk-guardian-retention} for the specified
workload implementation and completed stage targets. If every
$m_{\ell,N}\to\infty$, Corollary~\ref{cor:topk-master-grid-consistency}
therefore gives master-grid optimizer identification in probability.
The moment constants provide sufficient bounds, not calibrated predictions
of the finite budgets in the tables.

\paragraph{Structural evidence: exact Top-1 failure.}
The same hierarchy gives a concrete deterministic Top-1 failure.
Exact-objective Top-1 retains $(10,3.5)$ at level 0 and $(9.125,3.5)$ at
level 1. At level 2 its best action is again $(9.125,3.5)$, now a
nonguardian; the best guardian is $(8.6875,3.5)$, with
$\Delta_{2,1}\simeq-0.03699891$. Thus Top-1 loses the master-grid optimum
even with exact rankings. This geometric comparison does not imply that
Top-$K$ has smaller error at every equal finite operating-time budget.

LiQUAR operates two perturbed controls around its current center and forms a
finite-difference gradient. We use the published base schedules
\[
\eta_k=4k^{-1},\qquad
\delta_k=\min\{0.1,0.5k^{-1/3}\},\qquad
T_k=200k^{1/3},\qquad
\alpha=0.1,
\]
with nominal initial center $(10,5)$, projected as specified below.
Our primary comparator, labeled
\emph{LiQUAR-oracleW}, receives exact workload observations, as does
Top-$K$ in this benchmark. It still trims its holding-cost observations
to $[\alpha T_k,(1-\alpha)T_k]$. In the recovered implementation, its
cost-rate estimate for one period under $(\mu,p)$ is
\[
\widehat f^{\,G}
=-\frac{pJ}{T_k}
+\frac{1}{(1-2\alpha)T_k}
  \int_{\alpha T_k}^{(1-\alpha)T_k}W(u)\,du+\mu,
\]
where $J$ counts all arrivals during that period. Thus revenue uses
observed arrivals, not the analytic demand rate. Full-period costs are
recorded separately for performance evaluation. Exact workload removes
recoverability censoring but does not equate the methods' time windows,
cycle lengths, or update rules.

The additional delayed mode replaces $W(u)$ in this estimate by
$W(u)\mathbf1\{W(u)\le\mu(T_k-u)\}$, as in
\cite[Section~4.2]{ChenHongLiu2026}. The published FCFS observation model
reveals arrivals immediately and individual work on completion.
Our simulator evaluates the equivalent recoverability statistic from
its internal state; it does not implement a separate reconstruction
routine using customer records. Both modes retain the same warm-up
and overtime exclusions.
The corrected comparisons below use these observation modes, with
raw outputs, seed lists, and evaluation-control traces retained in
the accompanying reproducibility package.

\paragraph{Evaluation controls, feasibility, and the corrected projection.}
The recovered implementation uses a uniform coordinate choice
$i_k\in\{1,2\}$, $Z_k=2e_{i_k}$, and
\[
x_k^- =\bar x_k-\delta_k e_{i_k},\qquad
x_k^+ =\bar x_k+\delta_k e_{i_k},\qquad
H_k=\frac{2e_{i_k}}{\delta_k}
       \{\widehat f^{\,G}(x_k^+)-\widehat f^{\,G}(x_k^-)\}.
\]
It projects only the next center onto $\mathcal B$. This follows the
displayed update in \cite[Algorithm~1]{ChenHongLiu2026}, but does not
ensure that the two executed controls belong to $\mathcal B$.
For example, a capacity perturbation at the initial center $(10,5)$
uses $(10.1,5)$, outside the stated capacity range. At center
$(6.5,3.5)$, the possible lower probes have stability slacks
\[
6.4-\lambda(3.5)\simeq-0.0565631,\qquad
6.5-\lambda(3.4)\simeq-0.1818777.
\]
Thus stability of the center rectangle does not guarantee stability
of every fixed-action evaluation. A finite operating period can still
be simulated at such a control, but its stationary workload mean is
not finite; the resulting adaptive path is not thereby proved to
be unstable. The original numerical results must be distinguished
from results under a projection that enforces the stated domain.

For the corrected implementation, let $r=0.1$ and set
\[
\begin{gathered}
\mathcal B_r=[6.5+r,10-r]\times[3.5+r,7-r],\\
\bar x_1=\Pi_{\mathcal B_r}(10,5)=(9.9,5),\quad
\bar x_{k+1}=\Pi_{\mathcal B_r}(\bar x_k-\eta_kH_k).
\end{gathered}
\]
All other update and observation conventions are retained. The
archived tuning schedules are, more precisely,
\[
\eta_k(c)=4c/k,\qquad
\delta_k(c)=\min\{0.1,\,0.5c k^{-1/3}\},\qquad
c\in\{0.6,1,1.2\};
\]
the cap is not multiplied by $c$. Hence $\delta_k(c)\le r$ in every
case. Coordinatewise inclusion gives $x_k^\pm\in\mathcal B$ and
\[
\mu_k^\pm-\lambda(p_k^\pm)
\ge6.5-\lambda(3.5)
\simeq0.04343694>0.
\]
This construction preserves both the symmetry and the spacing of
the finite differences. It needs only the supplied domain and
perturbation cap; the demand function is used to verify the bound,
not to select controls. The benchmark optimizer
$(8.18393,3.78551)$ lies in $\mathcal B_r$, so the optimal value $f^*$
is unchanged. A fixed inset need not preserve the optimum of a
different problem, especially one with a boundary minimizer.
This is a feasibility correction, not a proof of LiQUAR convergence
or an application of the fixed-arrival-count Top-$K$ theorem.

We retain the displayed normalization of $H_k$ to isolate the
projection change. With exact objective values, its expectation
over the coordinate choice is $2\nabla f(\bar x_k)+O(\delta_k^2)$;
thus the unprojected update has twice the leading gradient term of
a convention normalized to $\nabla f$. No silent rescaling of the historical update is made.

The domain audit exactly reproduced all 190 saved baseline outputs
from an earlier independent LiQUAR recheck. Those trajectories
contained 7,874 out-of-domain evaluations, including 1,191
frozen-action-unstable evaluations. We therefore reran the comparison
with the corrected projection. Across 470 corrected replications
(190 oracle-workload, 190 delayed-workload, and 90 tuning runs), all
183,600 evaluations lie in $\mathcal B$; their smallest recorded
stability slack is $0.14344$. These checks supplement the pathwise
domain proof above.

Because Top-$K$ uses fixed-arrival-count cycles and LiQUAR uses fixed-clock
periods, we compare similar mean cumulative operating times rather than
iteration counts. Table~\ref{tab:liquar} combines the unchanged Top-$K$
samples with new LiQUAR-oracleW runs using the corrected projection.
LiQUAR uses 42, 100, and 200 iterations, respectively. Its mean times
differ from Top-$K$'s by at most about $2\%$; this is not a comparison
at identical pathwise stopping times. The reported Top-$K$ terminal
gaps are smaller at all three budgets. The original main-table
LiQUAR values and the independently reproduced rechecks are preserved
separately in the revision record.

\begin{table}[ht]
\centering
\small
\caption{Comparable operating-time budgets: corrected LiQUAR-oracleW
with inset-center projection. Top-$K$ samples are unchanged. The gap
is $f(\widehat x)-f(x^*)$.}
\label{tab:liquar}
\begin{tabular}{llrrrrr}
\toprule
Budget & Method & Rep. & Mean time & Mean gap & Median gap & $P\{\text{gap}\le0.25\}$ \\
\midrule
Low & Top-$K$ CTF       & 100 & 44,658  & 0.130 & 0.071 & 0.85 \\
    & LiQUAR-oracleW    & 100 & 44,383  & 5.063 & 1.511 & 0.13 \\
Medium & Top-$K$ CTF    & 60  & 137,661 & 0.084 & 0.056 & 0.93 \\
       & LiQUAR-oracleW & 60  & 140,066 & 1.425 & 0.358 & 0.38 \\
Large & Top-$K$ CTF     & 30  & 345,226 & 0.042 & 0.018 & 0.97 \\
      & LiQUAR-oracleW  & 30  & 351,941 & 0.154 & 0.111 & 0.83 \\
\bottomrule
\end{tabular}
\end{table}

With LiQUAR's delayed-workload statistic and the same corrected
projection, the median terminal gaps are approximately $0.861$,
$0.379$, and $0.153$ (100, 60, and 30 replications), above the
corresponding Top-$K$ values. Top-$K$ retains exact workload observations
in this comparison. These results are a sensitivity check for LiQUAR,
not a comparison in which both learners face delayed observations.
For a delayed-observation Top-$K$ procedure, the ranking rule must use
only costs available at its decision times, and the accumulated error
condition in Remark~\ref{rem:stagewise-observation-error} must be
verified under that actual policy. Neither this implementation nor
that verification is supplied here.

At the large budget, we also reran the three previously specified
settings $c\in\{0.6,1,1.2\}$ with shorter periods $T_k=40k^{1/3}$,
using 30 replications per setting and the same corrected projection.
Each run has 660 iterations and total operating time about $345{,}107$.
For $c=0.6,1,1.2$, respectively, the median terminal gaps are
$0.354$, $0.207$, and $0.236$. Table~\ref{tab:liquar-regret} supplies
the corresponding realized learning costs, replication counts, clock-time
variability, and pointwise distribution-free confidence intervals for the
population median. The intervals use symmetric order statistics and have
coverage at least $95\%$ under independent continuous replications; no
multiple-comparison adjustment is made.

\begin{table}[ht]
\centering
\scriptsize
\setlength{\tabcolsep}{4pt}
\caption{Realized cumulative learning cost in the corrected $M/M/1$
comparison. Time is displayed as mean (standard deviation). Regret is
$R^{\mathrm{real}}(H)$ from \eqref{eq:numerical-regret-definitions}, in
millions, displayed as sample median [pointwise distribution-free confidence
interval with coverage at least $95\%$].}
\label{tab:liquar-regret}
\begin{tabular}{@{}llrrl@{}}
\toprule
Budget & Method / setting & Rep. & Time: mean (SD) & Median regret [CI] \\
\midrule
Low & Top-$K$ CTF & 100 & $44{,}658\ (5{,}856)$ & $0.176\ [0.175,0.178]$ \\
    & LiQUAR-oracleW, baseline & 100 & $44{,}383\ (0)$ & $0.250\ [0.231,0.282]$ \\
\addlinespace
Medium & Top-$K$ CTF & 60 & $137{,}661\ (17{,}577)$ & $0.559\ [0.550,0.568]$ \\
       & LiQUAR-oracleW, baseline & 60 & $140{,}066\ (0)$ & $0.481\ [0.400,0.576]$ \\
\addlinespace
Large & Top-$K$ CTF & 30 & $345{,}226\ (40{,}540)$ & $1.425\ [1.397,1.452]$ \\
      & LiQUAR-oracleW, baseline & 30 & $351{,}941\ (0)$ & $0.689\ [0.610,0.826]$ \\
      & LiQUAR-oracleW, $c=0.6$, $T$ coefficient $40$ & 30
      & $345{,}107\ (0)$ & $0.536\ [0.456,0.589]$ \\
      & LiQUAR-oracleW, $c=1$, $T$ coefficient $40$ & 30
      & $345{,}107\ (0)$ & $0.478\ [0.425,0.583]$ \\
      & LiQUAR-oracleW, $c=1.2$, $T$ coefficient $40$ & 30
      & $345{,}107\ (0)$ & $0.449\ [0.426,0.515]$ \\
\bottomrule
\end{tabular}
\end{table}

The low-budget samples favor Top-$K$ on median realized regret, whereas the
medium- and large-budget samples favor corrected LiQUAR on that criterion.
At the large budget, every reported LiQUAR setting has a larger median
terminal gap but a smaller median realized regret than Top-$K$. The
shorter-period settings also reduce median realized regret relative to the
corrected LiQUAR baseline while increasing its median terminal gap. These are
unpaired comparisons at similar, not identical, time horizons. They support a
finite-sample tradeoff between terminal identification and learning cost; they
do not establish universal dominance or identify an optimally tuned LiQUAR
policy. The three settings are reported separately, without treating their
best observed value as an independently validated tuning result.

\subsection{Mixed discrete--continuous staffing: an $M/M/s$ benchmark}
\label{subsec:mms-benchmark}

This benchmark isolates the setting in which direct ranking is especially
natural: one decision coordinate is integer-valued and has no ordinary
gradient. Unlike the preceding continuous-capacity comparison, ordinary
derivatives with respect to staffing are unavailable. We consider
\[
s\in\{4,5,6,7,8\},\qquad p\in[3.5,7],\qquad \nu=1.75,
\]
where $s$ is the number of identical servers, each server works at rate $\nu$,
and the arrival rate remains logistic. The steady-state objective is
\[
f(s,p)=\mathbb E[Q_\infty(s,p)]+s-p\lambda(p).
\]
Here $Q_\infty$ counts all customers in the system, including those in service.
The shared nonpreemptive staffing model uses cycles of $D=50$ external
arrivals and cycle cost
\begin{equation}
G_t=\int_{\text{cycle }t}\bigl[Q(u)+\max\{s_t,B(u)\}\bigr]\,du-p_tD,
\label{eq:staffing-cycle-cost}
\end{equation}
where $Q(u)$ is the total number of customers and $B(u)$ the number of busy
servers. A staffing increase immediately starts waiting jobs when possible;
a decrease lets existing services finish and charges legacy busy servers
until completion. These immediate service starts are not counted as
external arrivals. The physical state retains the active residual service
requirements and the FCFS waiting work; these latent requirements need not
be observed by the learner, which uses accumulated cost and time. Future
arrivals use the newly selected price. These conventions define the canonical
staffing model implemented by the common Top-$K$/Top-1/flat driver below.
For exponential service, $\mathbb E[Q_\infty]$ is computed exactly from
Erlang C for evaluation only. The continuous-price optimum is approximately
\[
(s^*,p^*)=(5,3.73466),\qquad f^*=-12.97142.
\]

The hierarchy keeps the five integer staffing values unchanged while refining
only the price grid:
\[
5\times3\to5\times5\to5\times9\to5\times17.
\]
For the hierarchy audit and the canonical driver, a parent retains its
staffing coordinate and generates the next-grid
prices within half the parent price mesh. Thus, with
$h_\ell=3.5/(2^{\ell+1})$ and next-level price grid $P_{\ell+1}$,
\begin{equation}
\operatorname{Ch}_\ell(s,p)
=\{(s,p'):p'\in P_{\ell+1},\ |p'-p|\le h_\ell/2\}.
\label{eq:staffing-child-map}
\end{equation}
Unions deduplicate shared children, and cost ties are broken by $(s,p)$.
This rule covers every next-level grid. The master-grid optimum is
$(5,3.71875)$, with discretization gap about $0.00129$.
For $K=(2,2,2)$, exhaustive enumeration gives active-set family sizes
$1,27,47$. All cases are nonautomatic. Rational interval evaluation of
Erlang C certifies the unique optimizer and uniform margin lower bounds
$0.86630756$, $0.90998220$, and $0.69968727$, reproducing the reported
margins $0.8663$, $0.9100$, and $0.6997$.
This verifies the explicitly stated hierarchy. The stochastic implementation
is checked separately through fixed-action Erlang-C comparisons, transition
invariants, raw-schema and accounting validators, seed-isolation checks, and
exact aggregate replay from the retained replication rows.
In this example exact-objective Top-1 also
reaches the master-grid optimizer, so the main role of Top-$K$ is statistical
protection against an early noisy ranking rather than correction of a
deterministic coarse-grid failure.

Table~\ref{tab:mms} reports a new canonical comparison of Top-$K$, Top-1 CTF,
and flat direct SAA using 200 independent replications per method and budget.
All methods use the same simulator and terminal master grid. The frozen
per-action stage allocations for Top-$K$ are $(15,50,150,500)$,
$(45,150,450,1500)$, and $(120,400,1200,4000)$ at the low, medium, and large
budgets; the corresponding Top-1 allocations are $(22,72,215,717)$,
$(65,217,650,2167)$, and $(166,554,1663,5543)$. Flat SAA uses respectively
12, 36, and 95 cycles per master-grid action. These targets were calibrated
with pilot-only seeds and then frozen before the disjoint main-run seeds were
generated. Within each CTF stage, candidate blocks are randomly ordered by a
stream independent of the queue-simulation stream. Exact Erlang-C objectives
are used only for terminal evaluation, never for online ranking.

\begin{table}[ht]
\centering
\small
\caption{Canonical mixed discrete--continuous $M/M/s$ comparison, 200
replications per method and budget. Parentheses give pointwise normal 95\%
confidence intervals for the mean terminal gap.}
\label{tab:mms}
\resizebox{\textwidth}{!}{%
\begin{tabular}{llrrrrr}
\toprule
Budget & Method & Mean time & Mean events & Mean gap (95\% CI) & Median gap & Selected \\
\midrule
Low & Top-$K$ CTF & 31,486 & 270,270 & 0.1055 (0.0820, 0.1290) & 0 & 0.690 \\
    & Top-1 CTF    & 30,281 & 223,544 & 0.2615 (0.2209, 0.3020) & 0.3533 & 0.455 \\
    & Flat SAA     & 32,167 & 101,997 & 0.5052 (0.4020, 0.6083) & 0.3022 & 0.235 \\
Medium & Top-$K$ CTF & 94,284 & 809,995 & 0.0339 (0.0182, 0.0496) & 0 & 0.915 \\
       & Top-1 CTF    & 93,663 & 701,026 & 0.1898 (0.1596, 0.2199) & 0 & 0.550 \\
       & Flat SAA     & 96,526 & 305,997 & 0.2419 (0.2052, 0.2786) & 0.1934 & 0.410 \\
Large & Top-$K$ CTF & 249,881 & 2,133,995 & 0.0040 ($-0.0015$, 0.0096) & 0 & 0.990 \\
      & Top-1 CTF    & 248,249 & 1,870,332 & 0.1393 (0.1123, 0.1663) & 0 & 0.660 \\
      & Flat SAA     & 254,775 & 807,497 & 0.1505 (0.1254, 0.1756) & 0 & 0.530 \\
\bottomrule
\end{tabular}
}
\end{table}

The mean operating times differ from Top-$K$ by $-3.83\%$, $-0.66\%$, and
$-0.65\%$ for Top-1 and by $+2.16\%$, $+2.38\%$, and $+1.96\%$ for flat SAA
from low to large budget. Thus the comparison is approximately matched on
realized operating time, not pathwise or event matched; event totals differ
because the methods visit different prices and because CTF and flat SAA allocate
cycles differently. All six independent comparator-minus-Top-$K$ mean-gap
contrasts are positive: the Top-1 contrasts are $0.1560$ (95\% CI
$[0.1091,0.2028]$), $0.1559$ ($[0.1219,0.1898]$), and $0.1352$
($[0.1077,0.1628]$), while the flat-SAA contrasts are $0.3996$
($[0.2939,0.5054]$), $0.2080$ ($[0.1681,0.2479]$), and $0.1464$
($[0.1207,0.1721]$). The next-best master-grid action has gap $0.1934$,
so $P\{\text{gap}\le0.10\}$ equals the optimizer-selection frequency shown
in the final column. These are finite-sample results for this frozen design,
not a claim of universal or equal-resource dominance.

\begin{figure}[ht]
\centering
\includegraphics[width=0.72\textwidth]{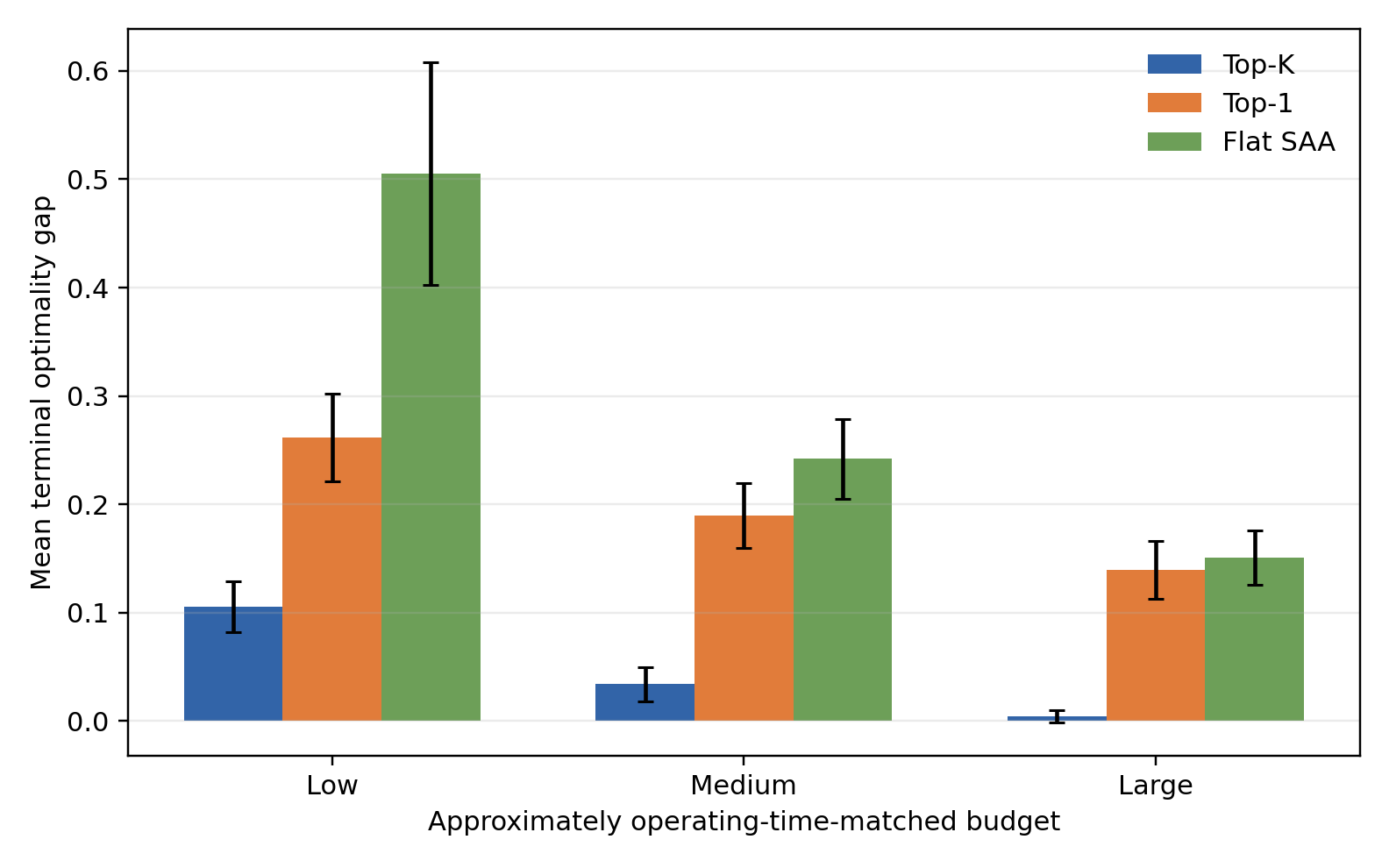}
\caption{Mean terminal optimality gap and pointwise normal 95\% confidence
interval in the canonical, approximately operating-time-matched $M/M/s$
comparison. Each point summarizes 200 independent replications.}
\label{fig:mms-gap}
\end{figure}

\FloatBarrier
\subsection{Heavy-tailed non-Markovian service: a nonpreemptive $M/G/s$ test}
\label{subsec:heavy-tail}

This experiment is a robustness test, not an additional model-verification
theorem. We next keep the same pricing and integer-staffing problem but replace
exponential service by lognormal service with
\[
\mathbb E[S]=1/1.75,
\qquad
\operatorname{SCV}(S)=4.
\]
The queue-length process alone is no longer Markovian; the complete persistent
state includes residual service requirements. Staffing changes are
nonpreemptive. When the target $s$ decreases, customers already in service
finish before the corresponding servers leave, and staffing cost is charged
for those legacy busy servers until completion.
Its fixed-arrival cycles and signed costs follow
\eqref{eq:staffing-cycle-cost}; the change is the service distribution.

Because lognormal service has no finite positive exponential moment, we do not
invoke Corollary~\ref{cor:common-lyapunov-regenerative} as an analytical
verification for this experiment: its common fixed-$D$ drift assumption is
geometric and need not hold in this heavy-tailed setting.  The experiment is
therefore presented as a numerical robustness test beyond that convenient
verification route.  The more general block-boundary conditions
\textnormal{(B2)}--\textnormal{(B3)} and the selected-cycle moment route
identify possible analytical avenues, but we do not claim here that those
conditions have been verified for this particular $M/G/s$ model.

There is no Erlang-C truth for this model. The archived reference table records
a mean, Monte Carlo standard error, and replication count for every point of
the $5\times17$ master grid, with additional replications for the leading
candidates. The fixed-action raw replications, seed rule, run length, and
burn-in were not recovered. The reference master-grid optimum is
\[
(s_L^*,p_L^*)=(5,3.9375),
\qquad f_L^*\approx-12.44433,
\]
while $(6,3.71875)$ is second-best with an estimated gap $0.08901$.
As a Monte Carlo-error sensitivity check, we form two-sided Student-$t$
intervals for all 85 archived means using Bonferroni allocation $0.05/85$.
The simultaneous interval for $f_L^*$ is
$[-12.46684,-12.42181]$, and its upper endpoint lies $0.05400$ below
the smallest lower endpoint among the other 84 actions. A separate pointwise
Welch interval for the runner-up gap is $[0.07623,0.10179]$.

The heavy-tailed model also changes the coarse-grid geometry. The recovered
nonpreemptive code uses \eqref{eq:staffing-child-map}. For $K=(3,3,2)$,
an exhaustive audit against its 85 saved reference means gives family sizes
$1,169,529$; all cases are nonautomatic. The uniform reference margins are
$0.54251352$, $0.86720211$, and $0.43318365$, reproducing the earlier
rounded values $0.543$, $0.867$, and $0.433$.
Propagating the simultaneous action intervals through the complete
guardian-preserving families gives conservative lower margins
$0.05753$, $0.73468$, and $0.36980$. Thus the reference ranking and all three
guardian margins remain positive after accounting for the reported Monte Carlo
standard errors under the usual replication-level $t$-interval assumptions.
The calculation does not control unknown transient bias or replace the missing
historical metadata, so it is not a true-objective guardian certificate and we
retain the term ``reference.'' Top-1 applied to these reference estimates
discards the reference-optimal refinement path and ends at the reference
second-best master-grid action. These statements concern estimated objective
values and are not included in the finite-sample theorem.

The first moderate-horizon runs were visibly noisier than in the light-tailed
benchmark. This is a descriptive comparison rather than an isolated service-tail
effect: the objective geometry and retention vector also change. To distinguish
a structural failure from a finite-horizon effect within this lognormal model,
we then increased all stage targets proportionally. Table~\ref{tab:heavy-long}
shows a clear concentration pattern. At the largest well-replicated horizon,
49 of 50 runs select the reference master-grid optimizer; the pointwise 95\%
Wilson interval for the corresponding replication frequency is
$[0.8950,0.9965]$, conditional on treating the reference target as fixed.
The first three rows use cumulative per-action stage targets
$(60,200,600,2000)$, $(120,400,1200,4000)$, and
$(240,800,2400,8000)$, with 30, 30, and 16 replications and seed ranges
$505000$--$505029$, $510000$--$510029$, and
$520000$--$520015$. The last two rows use targets
$(480,1600,4800,16000)$ and $(960,3200,9600,32000)$, 50 replications each,
and seeds $9400000$--$9400049$ and $9500000$--$9500049$. The saved
50-replication rows do not match the bundled driver's request for 100
replications or its seed formulas. The provenance package therefore supplies
a wrapper reconstructed from the saved rows and labels it as reconstructed,
not as the historical generating driver.
The uneven replication counts $30,30,16,50,50$ reflect successive experimental
extensions and the archived outputs that remain available; they were not chosen
to give equal precision or power across horizons.

\begin{table}[ht]
\centering
\scriptsize
\setlength{\tabcolsep}{3pt}
\caption{Long-horizon Top-$K$ identification under nonpreemptive lognormal
service with SCV $=4$. The mean gap is relative to the archived reference
master-grid minimum and is shown with a pointwise 95\% Student-$t$ interval
(clipped at the natural zero boundary for display). Counts are followed by
pointwise 95\% Wilson intervals. All intervals condition on the archived
reference surface.}
\label{tab:heavy-long}
\begin{tabular}{@{}rrlll@{}}
\toprule
Mean time & Rep. & Mean reference gap [CI] & Reference optimizer selected [CI] & Guardian survives [CI] \\
\midrule
$1.47\times10^5$ & 30 & $.0592\ [.0387,.0798]$ & $13/30\ [.274,.608]$ & $18/30\ [.423,.754]$ \\
$2.95\times10^5$ & 30 & $.0461\ [.0268,.0654]$ & $16/30\ [.361,.698]$ & $20/30\ [.488,.808]$ \\
$5.94\times10^5$ & 16 & $.0419\ [.0112,.0726]$ & $10/16\ [.386,.815]$ & $12/16\ [.505,.898]$ \\
$1.20\times10^6$ & 50 & $.0169\ [.00641,.0273]$ & $41/50\ [.692,.902]$ & $42/50\ [.715,.917]$ \\
$2.46\times10^6$ & 50 & $.00178\ [0,.00536]$ & $49/50\ [.895,.996]$ & $49/50\ [.895,.996]$ \\
\bottomrule
\end{tabular}
\end{table}

\begin{figure}[ht]
\centering
\includegraphics[width=0.72\textwidth]{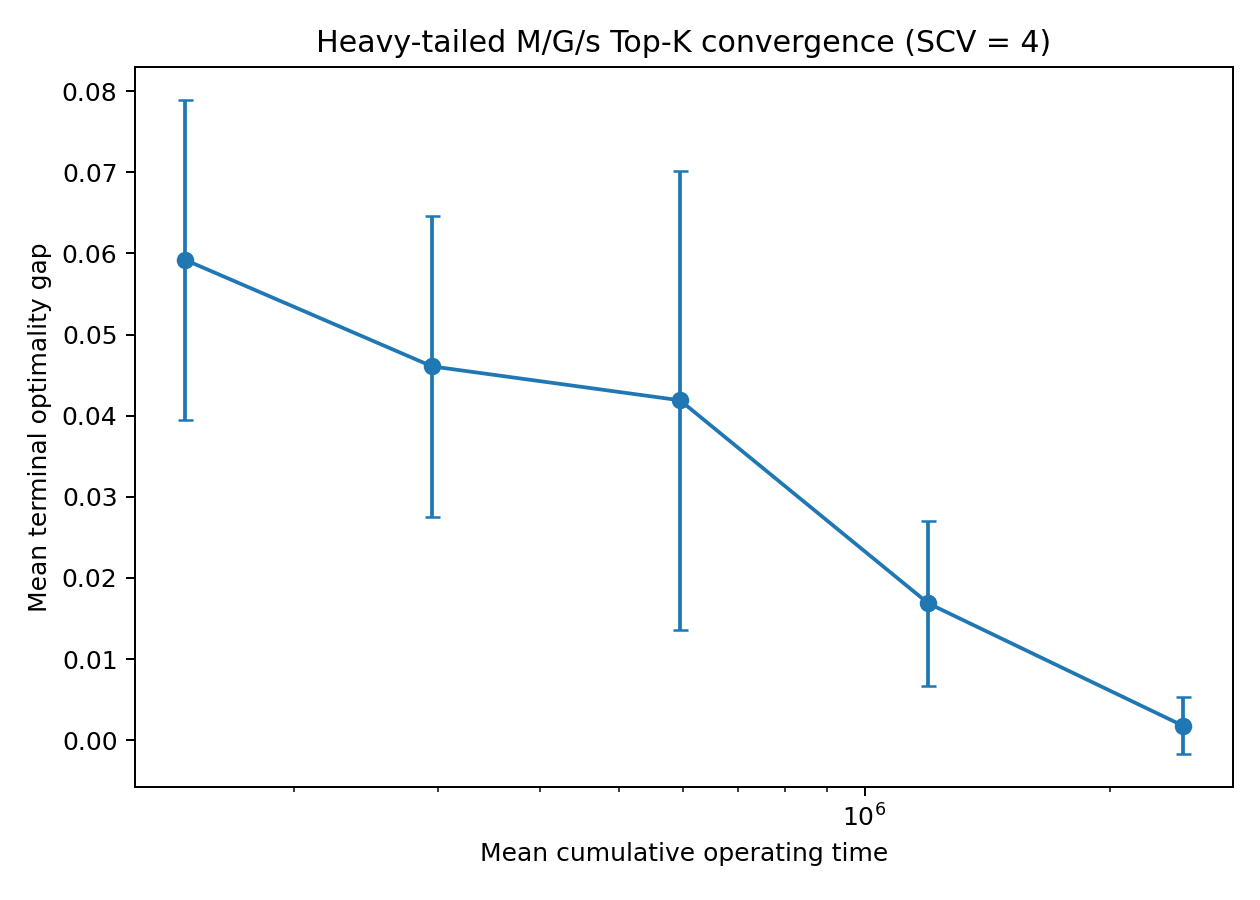}
\caption{Heavy-tailed $M/G/s$ Top-$K$ convergence as the operating horizon is
increased. The error bars are the pointwise normal approximations
$\overline\Delta\pm1.96\,\mathrm{SE}(\overline\Delta)$ computed across the
replications in Table~\ref{tab:heavy-long}; they are not simultaneous and
condition on the archived reference surface.}
\label{fig:heavy-convergence}
\end{figure}

A separate fixed-action diagnostic examines the queue-length tail at
$(s,p)=(5,3.9375)$.  The retained simulator routine uses the continuous-time
occupation estimator
\[
 \widehat p_{r,n}=\frac{1}{T_r}\int_0^{T_r}
 \mathbf 1\{Q_r(t)\ge n\}\,dt
\]
in replication $r$.  The archived final rows are labelled as 30,000 fixed-$D$
measurement cycles and contain 12 replications per service distribution, with
mean measurement duration about $2.775\times10^5$ time units per replication.
The historical wrapper, seed rule, and exact burn-in length were not recovered.

For the exponential row an exact benchmark is available.  At this action,
\[
 \rho=\frac{\lambda(3.9375)}{5(1.75)}=0.6177552449,
 \qquad C=0.2587030709,
\]
where $C$ is the Erlang-C delay probability, and for total number in system
\[
 \Pr\{Q\ge n\}=C\rho^{,n-5},\qquad n\ge5.
\]
Table~\ref{tab:heavy-tail-diagnostic} places these exact values beside the
archived means.  The brackets are pointwise 95\% Student-$t$ intervals across
the 12 replication estimates, treating the archived replications as
independent; lower endpoints are clipped at zero only for display.  These
intervals are not simultaneous, and the missing seed and burn-in record limits
their interpretation.
\begin{table}[ht]
\centering
\scriptsize
\setlength{\tabcolsep}{3.2pt}
\caption{Fixed-action queue-length tail diagnostic at
$(s,p)=(5,3.9375)$.  Archived entries are means over 12 replications with
30,000 measurement cycles each, followed by pointwise 95\% replication-level
Student-$t$ intervals.}
\label{tab:heavy-tail-diagnostic}
\begin{tabular}{@{}lccc@{}}
\toprule
Series & $P(Q\ge10)$ & $P(Q\ge20)$ & $P(Q\ge30)$ \\
\midrule
Exact exponential
& $0.0232747$ & $0.000188386$ & $1.52480\times10^{-6}$ \\
Archived exponential
& $.023309\ [.023009,.023609]$
& $.0001922\ [.0001726,.0002117]$
& $1.87\times10^{-7}\ [0,4.93\times10^{-7}]$ \\
Lognormal, SCV $=4$
& $.05446\ [.05395,.05497]$
& $.006490\ [.006195,.006786]$
& $.001246\ [.001102,.001391]$ \\
Lognormal, SCV $=9$
& $.08192\ [.08099,.08285]$
& $.02009\ [.01948,.02069]$
& $.007002\ [.006612,.007393]$ \\
\bottomrule
\end{tabular}
\end{table}
The archived exponential estimates agree well with the exact probabilities at
levels 10 and 20.  Level 30 is under-resolved: only 3 of the 12 exponential
replications record positive occupation above that level.  Their total is
$0.622$ time units over $3.329\times10^6$ measured time units, whereas the
stationary exact probability corresponds to about $5.08$ expected
occupation-time units over the same duration.  The archived mean is about
eight times smaller than the exact probability, so we do not use it as
validation at this threshold.

For the local ratio $P(Q\ge q+1)/P(Q\ge q)$ at $q=20$, the exact exponential
value is $\rho=0.617755$.  The archived replication means and pointwise 95\%
intervals are $0.5996\ [0.5753,0.6239]$, $0.8363\ [0.8303,0.8423]$, and
$0.8897\ [0.8867,0.8927]$ for exponential, lognormal-SCV-4, and
lognormal-SCV-9 service, respectively.  Thus the exponential ratio estimate
contains its exact value, while the two lognormal ratios remain much closer to
one and well separated under this replication-level calculation.  This
supports a qualitative slower-tail-decay interpretation at the fixed action,
not a causal attribution of the separate ranking experiment.

The general theory in Section~\ref{sec:theory} does not require the primitive
queueing state to be a CTMC; the selected-cycle route is formulated through
polynomial moment and boundary-growth conditions. We do not, however, claim
here that the strongest common-Lyapunov sufficient condition has been verified
for the lognormal $M/G/s$ model. We therefore interpret this experiment as a
numerical robustness test beyond the easiest analytically verifiable cases.

\FloatBarrier
\subsection{Comparison in a gradient-favorable regime}
\label{subsec:goliq-benchmark}

This final benchmark is intentionally unfavorable to a ``one method fits all''
interpretation. It asks how much is gained when a smooth model admits reliable,
purpose-built queueing sensitivities. We close the substantive comparisons
with the joint pricing--staffing $M/M/1$ example of Chen, Liu, and Hong
\cite{ChenLiuHong2023}. With the common demand function, holding-cost
coefficient $h_0=1$, and staffing cost $0.1\mu^2$, its objective is
\[
f(\mu,p)=\frac{\lambda(p)}{\mu-\lambda(p)}
+0.1\mu^2-p\lambda(p).
\]
On the comparison rectangle
$[6.7,12]\times[3.7,7.5]$, the exact optimum is approximately
\[
(\mu^*,p^*)=(7.10311,4.02337),
\qquad f^*=-13.12610.
\]
GOLiQ starts at $(12,7.5)$ and uses the published schedules
$\eta_k=1/k$ and $D_k=\lceil10+10\log k\rceil$, rounding the service-entry
target up to an integer as in the archived source. For Top-$K$, we use the hierarchy
\[
3\times3\to5\times5\to9\times9\to17\times17\to33\times33
\]
with $D=50$ and $K=(1,2,4,4)$. The recovered five-level source uses closed
half-parent-mesh boxes, as in \eqref{eq:mm1-child-map}, but with the
coordinate-specific meshes $5.3/(n_\ell-1)$ and $3.8/(n_\ell-1)$.
The low, medium, and large comparisons use cumulative per-action cycle
targets $(4,8,14,20,26)$, $(10,20,35,50,65)$, and
$(20,40,70,100,130)$, respectively.
An independent exhaustive audit reproduces its pruning-family sizes
$1,4,26,6667$ and positive uniform margins approximately
$4.87323137$, $3.29508491$, $1.15551954$, and $0.23064149$.
One level-1 active set preserves a guardian automatically; the finite
minimum there is taken over the other three sets.
Rational interval bounds certify their positivity and the unique master-grid
optimizer $(7.03125,4.05625)$. The implementation's comparison tolerance
preserves the intended closed-box child sets.
This grid optimum has cost $f_L^*\approx-13.12248055$, giving the
discretization gap $f_L^*-f^*\approx0.00361515$ in
Table~\ref{tab:analytic-benchmarks}. It is the smallest terminal gap
relative to the full-domain optimum attainable by a recommendation on
this master grid.

This is a hierarchy certificate. In the recovered source, the direct-SAA
cycle counts $D$ service entries and has cost
\[
G_t=\int_{\text{cycle }t}Q(u)\,du+0.1\mu_t^2T_t-p_tJ_t,
\]
where $J_t$ is the random number of external arrivals during the cycle,
which need not equal $D$. A service entry occurs at an arrival to an empty
system or at a departure followed by immediate refill. The cycle ends
after the $D$th such entry; an inherited in-service customer and a capacity
change are not counted as new entries. This continuous-time Markov chain
uses total customer count $Q$, rather than the workload integral of the
first benchmark. It is distinct from the fixed-arrival workload construction of
Proposition~\ref{prop:mm1-complete-verification}; the sampling and moment
hypotheses for this implementation have not been verified by that proposition.

The archived GOLiQ comparator retains an already scheduled arrival and its
stored price across a policy update, following
\cite[Section~3.3]{ChenLiuHong2023}; its updated price applies after that
arrival. Direct Top-$K$ instead applies its new price immediately to
subsequent arrivals. The two methods share a fixed-action objective
but use different transient price-switching conventions. The numerical
effect of that difference has not been isolated.

GOLiQ's update uses completed waiting times observed at service entry
and busy-period ages recorded at the corresponding arrivals, together
with the known demand function and its derivative.
The recovered code discards the first $\lfloor0.25D_k\rfloor$ customer
observations from its gradient average. Customer-count and staffing
integrals and realized revenue are recorded for evaluation, not supplied
to that gradient. In particular, evaluation revenue is the sum of prices
actually charged to arrivals, including the stored price of a pending
arrival; all elapsed time and costs are counted despite gradient trimming.
For pseudo-regret, the integrand in \eqref{eq:numerical-regret-definitions}
uses the stationary objective at the current action label. It therefore does
not reproduce the transient revenue effect of a pending arrival carrying its
stored earlier price; that effect is included in realized cost.

Table~\ref{tab:goliq} uses service-entry budgets matched exactly within every
paired replication. Elapsed operating times remain random and are not
matched. An audit of the archived driver found that its Python-level seed did
not seed the Numba-compiled Top-$K$ simulator, so Top-$K$ inherited Numba
state from the preceding GOLiQ run. The corrected driver seeds the Top-$K$
action order, Top-$K$ simulator, and GOLiQ explicitly and independently, then
reruns all 300 pairs. In
this gradient-favorable regime, GOLiQ uses smooth local sensitivity information
very efficiently and achieves much smaller terminal gaps as well as lower
mean pseudo-regret and realized-regret rates. The comparison therefore serves as a boundary-of-scope
check: when reliable model-specific sensitivity structure is available, it can
be substantially more efficient to exploit that structure than to replace it
with direct ranking.

\begin{table}[ht]
\centering
\small
\caption{Corrected independently seeded Top-$K$ and GOLiQ comparison on the
smooth joint pricing--staffing benchmark. Each row is based on 100 paired
replications with exactly matched service-entry counts.}
\label{tab:goliq}
\begin{tabular}{lrrrr}
\toprule
Method / budget & Mean service entries & Mean gap & Median gap & $P\{\text{gap}\le0.10\}$ \\
\midrule
GOLiQ, low      & 57,755  & 0.0452 & 0.0298 & 0.87 \\
Top-$K$, low    & 57,755  & 0.1952 & 0.1109 & 0.42 \\
GOLiQ, medium   & 145,690 & 0.0145 & 0.00854& 1.00 \\
Top-$K$, medium & 145,690 & 0.1389 & 0.0896 & 0.53 \\
GOLiQ, large    & 289,495 & 0.00827& 0.00722& 1.00 \\
Top-$K$, large  & 289,495 & 0.1375 & 0.1024 & 0.49 \\
\bottomrule
\end{tabular}
\end{table}

Table~\ref{tab:goliq-regret} reports both resource measures and the two
learning-cost rates. The common service-entry count varies across pairs because
the Top-$K$ final active set is random, but is identical for the two methods
within each pair. Top-$K$ takes more elapsed time at the same entry count in
these samples, so raw cumulative regret would combine policy loss with a
different clock horizon. We therefore compare $R^{\mathrm{ps}}(H)/H$ and
$R^{\mathrm{real}}(H)/H$, while displaying clock time explicitly. Parentheses
for entries and time contain sample standard deviations; parentheses for the
two rates contain Monte Carlo standard errors.

\begin{table}[ht]
\centering
\scriptsize
\setlength{\tabcolsep}{3pt}
\caption{Resource and learning-cost accounting for the corrected
Top-$K$--GOLiQ experiment (100 paired replications per budget).}
\label{tab:goliq-regret}
\begin{tabular}{@{}llrrrr@{}}
\toprule
Budget & Method & Entries: mean (SD) & Time: mean (SD)
& $R^{\mathrm{ps}}/H$: mean (SE) & $R^{\mathrm{real}}/H$: mean (SE) \\
\midrule
Low & Top-$K$ & $57{,}755\ (5{,}188)$ & $13{,}154\ (1{,}122)$
& $3.8197\ (0.0261)$ & $3.7963\ (0.0264)$ \\
    & GOLiQ & $57{,}755\ (5{,}188)$ & $11{,}361\ (1{,}175)$
& $0.5674\ (0.0596)$ & $0.5551\ (0.0605)$ \\
\addlinespace
Medium & Top-$K$ & $145{,}690\ (11{,}954)$ & $33{,}151\ (2{,}540)$
& $3.8139\ (0.0240)$ & $3.8061\ (0.0246)$ \\
       & GOLiQ & $145{,}690\ (11{,}954)$ & $28{,}176\ (2{,}348)$
& $0.2264\ (0.0171)$ & $0.2251\ (0.0178)$ \\
\addlinespace
Large & Top-$K$ & $289{,}495\ (15{,}602)$ & $66{,}022\ (3{,}340)$
& $3.7809\ (0.0165)$ & $3.7755\ (0.0173)$ \\
      & GOLiQ & $289{,}495\ (15{,}602)$ & $55{,}726\ (3{,}273)$
& $0.1127\ (0.0086)$ & $0.1091\ (0.0097)$ \\
\bottomrule
\end{tabular}
\end{table}

As a descriptive uncertainty check, the mean paired Top-$K$ minus GOLiQ
pseudo-regret-rate differences are $3.252$, $3.588$, and $3.668$ at the low,
medium, and large budgets. Their pointwise $95\%$ $t$ intervals are
$[3.130,3.374]$, $[3.534,3.642]$, and $[3.630,3.706]$, respectively.
Thus the claim of lower GOLiQ pseudo-regret rate is supported by the retained
samples. It remains specific to the event-matched design and does not assert
equal-clock-time efficiency or a general algorithmic ordering.

\FloatBarrier
\subsection{Computational specification and provenance}
\label{subsec:computational-record}

Table~\ref{tab:computational-record} consolidates the experimental record.
``Full'' means that source or deterministic calculations, inputs, randomization,
raw output, and aggregation are sufficient to reproduce or independently
check the displayed object. ``Partial'' means that saved raw output is
traceable but some historical driver, seed, or burn-in information is absent.
``Archival'' means that the recovered record is limited to a historical
summary or figure and validation material. The accompanying machine-readable
manifest gives filenames, exact seed ranges, stage targets, switching and tie
rules, and the known gaps for each object; its validation script reaggregates
the retained raw data and checks the manuscript figure hashes.

\begin{table}[ht]
\centering
\footnotesize
\setlength{\tabcolsep}{4pt}
\caption{Computational specification and provenance of the numerical study.}
\label{tab:computational-record}
\begin{tabular}{@{}>{\raggedright\arraybackslash}p{0.19\textwidth}
>{\raggedright\arraybackslash}p{0.48\textwidth}
>{\raggedright\arraybackslash}p{0.24\textwidth}@{}}
\toprule
Object & Replication and budget specification & Record \\
\midrule
$M/M/1$ validation
& 20 replications, 2,000 cycles each, $D=50$; separate 5,000-cycle duration path.
& Full. \\
\addlinespace
$M/M/1$ Top-$K$/LiQUAR
& Top-$K$: 100/60/30 replications and recovered four-stage targets. Corrected LiQUAR: 100/60/30 baseline replications plus three 30-run sensitivity settings.
& Top-$K$ partial; corrected LiQUAR full in N3 package. \\
\addlinespace
Light-tailed $M/M/s$
& 200 replications per method and budget; frozen allocations; independent
ordering and simulation seeds; 1,800 retained raw rows.
& Full canonical replacement: configuration, manifest, raw rows, aggregation,
validators, and exact replay retained. \\
\addlinespace
Lognormal $M/G/s$
& 30/30/16/50/50 replications with all target vectors and seeds recorded; 12-replication tail diagnostic.
& Main runs full/partial as specified; tail diagnostic partial, with exact exponential baseline and replication-level intervals added. \\
\addlinespace
Top-$K$/GOLiQ
& 100 paired replications per budget; service entries matched exactly; three independent explicit seed streams.
& Full corrected run; replaces coupled-stream archive. \\
\bottomrule
\end{tabular}
\end{table}

The classifications concern computational traceability, not theoretical
verification. For example, the full canonical light-tailed record makes its
reported summaries reproducible but does not by itself prove a general
algorithmic ordering. Likewise, a full simulation record would not remove the
reference-value uncertainty in the heavy-tailed model.

\FloatBarrier
\subsection{Summary of the numerical evidence}

The experiments support a deliberately qualified conclusion. When the goal is
accurate terminal identification from direct queue operation, Top-$K$ can be
stronger than the corrected LiQUAR comparator in the reported
exact-workload samples, although LiQUAR has lower median cumulative
learning cost at the large budget. In the canonical mixed-integer staffing
experiment, Top-$K$ has lower mean terminal gaps and higher optimizer-selection
frequencies than Top-1 and flat SAA at each approximately clock-matched budget;
all six reported independent mean-gap contrasts exclude zero. In the lognormal benchmark, rankings
stabilize only at longer horizons and the identification effect remains visible.
Because objective geometry and $K$ also change, this experiment does not isolate
a causal service-tail effect. The final GOLiQ comparison also marks an important
boundary: when smooth, reliable model-specific sensitivity information is
available, a specialized gradient method can be much more efficient.

\section{Discussion}
\label{sec:discussion}

\paragraph{Where direct Top-$K$ is most useful.}
The numerical evidence argues against viewing direct SAA as a replacement for
all queue-learning methods. Direct Top-$K$ is most attractive when objective
observations are easy to accumulate but sensitivity formulas are inconvenient,
when controls are discrete or mixed, or when the principal objective is
high-quality terminal policy identification rather than minimum cumulative
learning loss. The final GOLiQ comparison complements these cases by showing
the value of exploiting a reliable model-specific gradient on a smooth
problem.

\paragraph{Two roles of Top-$K$.}
The experiments reveal two distinct reasons to retain more than one parent.
First, coarse-grid objective ranking can distort descendant quality: a guardian
can be outranked even under exact objective values, as in the original
$M/M/1$ hierarchy, and under the estimated reference values in the
heavy-tailed $M/G/s$ example. Second, even when
exact-objective Top-1 would follow the correct path, finite-sample SAA noise can
eliminate it; the mixed-staffing $M/M/s$ benchmark illustrates this statistical
protection role. Proposition~\ref{prop:topk-margin-comparison} distinguishes
these mechanisms at a common active set, and the guardian theorem propagates
retention through stages under its full-family separation condition. This
does not establish dominance of Top-$K$ over Top-1 at a fixed total budget.

\paragraph{Transient effects and heavy tails.}
The direct-SAA decomposition isolates accumulated conditional transient bias
from martingale fluctuation. Contiguous action blocks are helpful because
bias-function terms telescope internally, leaving only block boundaries. The
lognormal experiment suggests that heavy-tailed service can make these finite
horizons practically much more demanding: rare long services create
long-lived congestion and slow stabilization of rankings. The long-horizon
experiment shows that this need not prevent accurate identification, but it
raises an important theoretical question about verifying weaker polynomial
stability conditions without a common geometric-drift assumption. This
interpretation is supported at the fixed diagnostic action by the exact-baseline
comparison around level 20, although the archived exponential estimate at
level 30 is too sparse for validation.  The cross-benchmark interpretation
remains suggestive because the objective
geometry and retention vector are not held fixed.

\paragraph{Terminal identification versus cumulative regret.}
The finite-sample and asymptotic theorems target the terminal recommendation, which is close
to a simple-regret objective. Cumulative regret charges the exploration blocks
themselves. In the corrected large-budget LiQUAR sensitivity study,
shorter periods reduce median cumulative learning cost but increase
the median terminal gap relative to the corrected baseline. This
illustrates why the two objectives need to be reported separately. A cumulative
regret analysis for blocked CTF is therefore a useful extension rather than an
implicit consequence of the present identification theory.

\paragraph{Scalability and limitations.}
A finite hierarchy makes integer and categorical controls natural and avoids a
projection or nonlinear solver at each update, but the number of candidate
actions can still grow rapidly with dimension. Top-$K$ reduces the number of
actions sampled deeply, yet high-dimensional problems may require sparse
coordinate refinement, problem-specific hierarchies, or local borrowing across
nearby actions. Guardian separation is also a genuine structural condition:
if every parent leading to a master-grid optimizer is ranked below
$K_\ell$ competing regions even under the true objective, additional sampling
cannot repair deterministic mis-refinement.

\paragraph{Tuning and verification still needed.}
A practical data-driven retention rule would need to balance the risk of
discarding an optimal region against the cost of sampling its descendants.
The present fixed-$K$ analysis identifies this tradeoff but does not solve it.
Full guardian-family audits are now available for the exact-formula
hierarchies documented above. The light-tailed staffing comparison now has a
canonical shared driver and a fully retained simulation record; that
computational validation remains distinct from a theorem establishing
equal-resource or universal superiority. The exhaustive lognormal reference
audit now includes simultaneous Monte Carlo-error sensitivity bounds, but
missing transient-bias and historical run metadata still preclude a certificate
for true-objective margins.
Proposition~\ref{prop:mm1-complete-verification} completes the common
Lyapunov and weighted cycle-reward verification for the specified
$M/M/1$ workload model. Extending that verification to other queueing
models remains substantive work; the duration bound alone is not a
stability argument. These distinctions separate the conditional
theorems, the current numerical evidence, and the remaining validation tasks.

\paragraph{Further directions.}
Several extensions are natural. The master grid may be refined with the total
budget if estimation errors vanish relative to shrinking guardian margins.
Quantitative guarantees under delayed or imperfect cost observation
require control of the accumulated observation error at the actual
ranking times; Remark~\ref{rem:stagewise-observation-error} gives the
corresponding transfer bound. Heavy-tailed
models motivate direct verification of selected-cycle polynomial moment
conditions. Finally, in large service systems it would be useful to compare
direct learned integer staffing with square-root staffing prescriptions in the
Halfin--Whitt regime, thereby quantifying when online learning adds value over
classical asymptotic rules.

\section{Conclusion}
\label{sec:conclusion}

This paper develops a direct sample-average route to queueing control on a
single persistent operating trajectory. Fixed-event-count cycles provide the
cost-rate estimator; the transient-bias/martingale decomposition explains its
error; and regenerative boundary representations show why contiguous action
blocks can control inherited-state effects without resetting the queue.

Top-$K$ coarse-to-fine search turns that estimator into a structured decision
procedure. Guardian retention connects finite-stage ranking accuracy to
master-grid identification and distinguishes two reasons to retain multiple
parents: deterministic coarse-grid distortion and finite-sample mis-ranking.
The exact-objective Top-1 failure in the four-level $M/M/1$ hierarchy gives a
concrete structural example of the first mechanism. For the specified
persistent-workload $M/M/1$ implementation, the paper verifies the common
drift, bias representation, cycle-reward moments, duration control, and
completed-stage sampling assumptions needed by the theory. Other queueing
models require their own verification.

The numerical results separate terminal identification from the operating cost
of learning. Top-$K$ has a terminal-gap advantage in the reported exact-workload
LiQUAR comparison, whereas corrected LiQUAR has lower median cumulative
learning cost at the large budget. The canonical mixed-integer staffing
experiment shows lower mean terminal gaps and higher optimizer-selection
frequencies for Top-$K$ than for Top-1 and flat SAA across three approximately
clock-matched budgets, illustrating the value of protecting several refinement
paths in this design. The lognormal-service experiment
shows slower finite-horizon stabilization and is interpreted as a robustness
test rather than a completed heavy-tail verification. The GOLiQ comparison
marks the opposite boundary: a specialized gradient method can be much more
efficient when reliable model-specific sensitivities are available.

Accordingly, the contribution is not a universal replacement for gradient or
bandit methods. It is a complementary framework for problems in which realized
objective costs are observable, the queue persists across control changes, and
the operational decision space makes direct ranking and structured refinement
more credible than a model-specific gradient construction.

\medskip
\noindent\chg{\textbf{Funding:} This research was supported in part by the Natural Sciences and Engineering Research Council of Canada [Grant RGPIN-2022-04915]. M. Nozohour Yazdi also acknowledges support from Carleton University.}

\end{document}